\tikzstyle{v}=[thick, draw, circle, inner sep=1.5pt]
\newcommand{\arxiv}[1]{\href{http://arxiv.org/abs/#1}{\texttt{arXiv:#1}}}
\g@addto@macro\normalsize{%
  \setlength\abovedisplayskip{5pt plus 3pt minus 3pt}
  \setlength\belowdisplayskip{5pt plus 3pt minus 3pt}
  \setlength\abovedisplayshortskip{5pt plus 3pt minus 2pt}
  \setlength\belowdisplayshortskip{5pt plus 3pt minus 2pt}
}
\def\dfrac#1#2{\lower0.15ex\hbox{\large$\frac{#1}{#2}$}}
\numberwithin{equation}{section}
\def\({\bigl(}
\def\){\bigr)}
\newtheorem{thm}{Theorem}
\newtheorem{cor}{Corollary}
\newtheorem{lem}{Lemma}
\newtheorem{prop}{Proposition}
\theoremstyle{definition}
\theoremstyle{remark}
\newtheorem{rem}{Remark}
\let\eps=\varepsilon
\newcommand{\sB}{\mathscr{B}}                     
\newcommand{\cC}{\mathcal{C}}                     
\newcommand{\sC}{\mathscr{C}}                     
\newcommand{\fC}{\mathfrak{C}}                    
\newcommand{\cE}{\mathcal{E}}                     
\newcommand{\cF}{\mathcal{F}}                     
\newcommand{\sG}{\mathscr{G}}                     
\newcommand{\cI}{\mathcal{I}}                     
\newcommand{\sI}{\mathscr{I}}                     
\newcommand{\cO}{\mathcal{O}}                     
\newcommand{\sP}{\mathscr{P}}                     
\newcommand{\sQ}{\mathscr{Q}}                     
\newcommand{\cV}{\mathcal{V}}                     
\newcommand{\sW}{\mathscr{W}}                     
\newcommand{\sT}{\mathscr{T}}                     
\newcommand{\sF}{\mathscr{F}}                     
\newcommand{\IN}{\mathbb{N}}                      
\newcommand{\Nb}{\textup N}                       
\newcommand{\dg}{\operatorname{deg}}              
\newcommand{\VS}{\operatorname{V}}                
\newcommand{\ES}{\operatorname{E}}                
\newcommand{\es}{\emptyset}                       
\newcommand{\tw}{\operatorname{tw}}               
\newcommand{\sm}{\setminus}                       
\newcommand{\nis}{\#{\textsf{ind}}}
\let\originalleft\left
\let\originalright\right
\renewcommand{\left}{\mathopen{}\mathclose\bgroup\originalleft}
\renewcommand{\right}{\aftergroup\egroup\originalright}
\newcommand{\ncol}[1][q]{\ensuremath{\textsf{\#col}_{#1}}\xspace}
\newcommand{\col}[1][q]{\ensuremath{\textsf{$\exists$col}_{#1}}\xspace}
\newcommand{\nind}{\ensuremath{\textsf{\#ind}}}
\newcommand{\Gbar}{\ensuremath{\bar{G}}}
\newcommand{\Tbar}{\ensuremath{\bar{T}}}
\newcommand{\chiss}{\ensuremath{\chi_{\mathrm{s}}}}
\newcommand{\cs}{clique separator\xspace}
\newcommand{\CCD}{clique cutset decomposition\xspace}
\newcommand{\CCDs}{clique cutset decompositions\xspace}
\newcommand{\wlg}{without loss of generality\xspace}
\newcommand{\ifff}{if and only if\xspace}
\newcommand{\bs}[1]{\boldsymbol{#1}}
\newcommand{\msf}{\textsf}
\newcommand{\bsf}[1]{\ensuremath{\{\mathsf{#1}\}}}
\newcommand{\pth}[1]{\ensuremath{P_{#1}}}
\newcommand{\thksf}[1]{\textup{\sf thick}(\ensuremath{#1})\xspace}
\newcommand{\fulsf}[1]{\ensuremath{\textup{\sf full}(#1)}\xspace}
\newcommand{\thin}[1]{\textup{\sf thin}(\ensuremath{#1})\xspace}
  \definecolor{darkred}{rgb}{0.6,0.0,0.0}
  \definecolor{lightred}{rgb}{1.0 0.8 0.8}
  \definecolor{lightblue}{rgb}{0.8 0.8 1.0}
  \definecolor{darkgreen}{rgb}{0.0 0.5 0.0}
  \definecolor{lightgreen}{rgb}{0.7 1.0 0.7}
\tikzset{empty/.style={}}
\tikzset{ b/.style = { circle 
                     , draw
                     , thick
                     , inner sep = 0pt
                     , fill = black
                     , minimum size = 3.5pt
                     }
        , sb/.style = { circle 
                     , draw
                     , thick
                     , inner sep = 0pt
                     , fill = black
                     , minimum size = 2pt
                     }
        , w/.style = { circle 
                     , draw
                     , thick
                     , inner sep = 0pt
                     , fill = white
                     , minimum size = 3.7pt
                     }
        , sw/.style = { circle 
                     , draw
                     , thick
                     , inner sep = 0pt
                     , fill = white
                     , minimum size = 2pt
                     }
        , bl/.style = { circle 
                     , draw
                     , thick
                     , inner sep = 0pt
                     , fill = blue
                     , minimum size = 5pt
                     },
        rd/.style = { circle 
                     , draw
                     , thick
                     , inner sep = 0pt
                     , fill = red
                     , minimum size = 5pt
                     }
        , g/.style = { circle 
                     , draw
                     , thick
                     , inner sep = 0pt
                     , fill = lightgray
                     , minimum size = 4.0pt
                     }
        , i/.style = { circle 
                     , thick
                     , inner sep = 0pt
                     , minimum size = 4.0pt
                     }
        , R/.style = { circle 
                     , draw
                     , thick
                     , inner sep = 0pt
                     , fill = lightred
                     , minimum size = 5.5mm
                     }
        , B/.style = { circle 
                     , draw
                     , thick
                     , inner sep = 0pt
                     , fill = lightblue
                     , minimum size = 5.5mm
                     }
        , G/.style = { circle 
                     , draw
                     , thick
                     , inner sep = 0pt
                     , fill = lightgreen
                     , minimum size = 5.5mm
                     }
        , U/.style = { circle 
                     , draw
                     , thick
                     , inner sep = 0pt
                     , minimum size = 5.5mm
                     }
        }
\date{25 October 2024}
\title{Thick Forests}
\author{Martin Dyer\thanks{Work supported by
    EPSRC grant EP/S016562/1 ``Sampling in hereditary classes''.}\\
\small School of Computing\\[-0.5ex]
\small University of Leeds\\[-0.5ex]
\small Leeds LS2~9JT, UK\\[-0.5ex]
\small\texttt{m.e.dyer@leeds.ac.uk}\\
\and Haiko M\"{u}ller\footnotemark[1]\\
\small School of Computing\\[-0.5ex]
\small University of Leeds\\[-0.5ex]
\small Leeds LS2~9JT, UK\\[-0.5ex]
\small\texttt{h.muller@leeds.ac.uk}
}
\begin{document}

\maketitle

\begin{abstract}
We consider classes of graphs, which we call \emph{thick graphs}, that have 
the vertices of a corresponding \emph{thin} graph replaced by cliques and the 
edges replaced by cobipartite graphs. In particular, we consider the case of 
\emph{thick forests}, which we show to be the largest class of perfect thick 
graphs. 

Recognising membership of a class of thick $\sC$-graphs is NP-complete 
unless the class $\sC$ is triangle-free, so we focus on this case.
Even then membership can be NP-complete. However, we show that the class of
thick forests can be recognised in polynomial time. 

We consider two well-studied combinatorial problems on thick graphs, 
independent sets and proper colourings. Since determining the independence or 
chromatic number of a perfect graph is known to be tractable, we examine the 
complexity of \emph{counting} all independent sets and colourings in thick 
forests.
 
Finally, we consider two parametric extensions to larger classes of thick 
graphs: where the parameter is the size of the thin graph, and where the 
parameter is its treewidth.
\end{abstract}

\renewcommand{\figurename}{Fig.}
\section{Introduction}\label{sec:intro}
We introduce classes of graph, which we call \emph{thick graphs}, that can be represented with cliques as vertices and connecting cobipartite graphs as edges. We call the cliques \emph{thick vertices}, or \emph{nodes} and the cobipartite graphs \emph{thick edges}, or \emph{links}. The graph given by shrinking the cliques to single vertices will be called the \emph{thin graph}, which is usually far from  unique. The purpose is to simplify the input graph structure in order to facilitate algorithmic applications.

More formal definitions are given in section~\ref{sec:thickgraphs}. General notation and definitions are given in section~\ref{sec:prelims} below.

In particular, we are interested in whether a given graph has any thin graph lying in some particular graph class. This is the \emph{recognition} problem. However, it seems to be NP-complete for most classes of thin graphs, for example bipartite graphs, as we show in section~\ref{sec:subcol}. We also consider the recognition problem when the input graph must itself lie in some given graph class. We give formal definitions in section~\ref{sec:thickgraphs}.

We show in section~\ref{sec:subcol} that only thick triangle-free graphs can be easily recognised, with a shorter proof than one previously given in~\cite{MacYu}. However, since it is NP-complete to recognise thick bipartite graphs, not all triangle-free thin graphs are recognisable in P. For the triangle-free case,  a fixed-parameter (XP) recognition algorithm was given \cite{MacYu}, when the parameter is the thin graph $H$. But this did not show fixed-parameter tractability (FPT). In section~\ref{sec:fixedparam}, we use the methods developed in section~\ref{sec:recog} to give an FPT algorithm for this class, when the parameter is the number of vertices in the thin graph $H$. We also show that this problem is computationally equivalent to that considered in \cite{MacYu}. 

Since most thick classes lead to NP-complete recognition problems, we focus on a potentially tractable case, where the thin graph must be a \emph{forest}. We show in section~\ref{sec:thickforests} that thick forests are perfect graphs, resembling chordal graphs in some respects. Whereas chordal graphs have clique trees, thick trees are trees of cliques. However, though it is obvious that thick forests are not chordal in general, we  show that not all chordal graphs are thick forests. We develop a polynomial time recognition algorithm for thick forests in section~\ref{sec:recog}. This forms the central section of the paper. 

The \emph{union} of chordal graphs and thick forests is (properly) contained in a larger class $\sQ$ of graphs, which we call \emph{quasi} thick forests, defined in section~\ref{sec:CCD}. We show in section~\ref{sec:CCD} that $\sQ$ is a class of perfect graphs, implying the same result for thick forests, which we give in section~\ref{sec:thickbip}. These can be recognised efficiently by \emph{\CCD}~\cite{Tarjan}, though this does not give efficient recognition for thick forests, as it does for recognition of chordal graphs. Nevertheless, it is more suited to algorithmic applications.

We consider two classical graph problems which are intrinsically related to perfection: independent sets and colourings. That the decision version of these problems is in P for $\sQ$ follows from perfection, though we note that the known algorithms are not combinatorial and require continuous optimisation methods. Therefore we consider the counting versions which are hard for general perfect graphs. Counting colourings and independent sets \emph{exactly} are both \#P-complete for perfect graphs, even for bipartite graphs, and their \emph{approximate} versions are both \#BIS-hard~\cite{DyGoGJ}. So we are interested in classes of graphs for which polynomial time counting is possible.

We show in section~\ref{sec:colouring} that counting colourings exactly remains \#P-complete for thick forests, but approximate counting is in polynomial time for $\sQ$. We use an approach introduced by Dyer, Jerrum, M\"{u}ller and Vu\v{s}kovi\'{c}~\cite{DJMV} for approximately counting independent sets in claw-free perfect graphs via  \CCD. We show here that the technique of~\cite{DJMV} yields polynomial time approximately counting of colourings of graphs in $\sQ$, though this was not possible for claw-free perfect graphs. Note that thick forests are far from claw-free, since they contain all trees. In section~\ref{sec:colouringbeyond}, we show further that even approximate colouring is not possible in any class of thick graphs which is not a subset of thick forests.

By contrast, we show in section~\ref{sec:indsets} that exact counting of (weighted) independent sets can be done in polynomial time for $\sQ$, again using the technique of~\cite{DyGoGJ}, so the counting problem is little harder than for chordal graphs. 

Since forests are precisely the graphs of \emph{treewidth}~1, and thick treewidth-1 graphs are tractable, in section~\ref{sec:treewidth} we consider thick treewidth-$k$ graphs, with $k>1$ a small constant.
We regard this as a parameterisation of the thin graph, where the parameter is treewidth. 
We show that recognition of this class in NP-complete for all $k>1$, so we are obliged to assume that a solution of the recognition problem is supplied. That is we are given a thin graph of treewidth at most $k$.
But even then, we show that counting colourings is hard, even approximately.
However, on the positive side, we give an XP algorithm for exactly counting independent sets. 

\subsection{Previous work}\label{sec:previous}

Thick graphs appear to have been explicitly introduced by Salamon and Jeavons~\cite{SalJ} in the context of the Constraint Satisfaction Problem (CSP). The satisfiability problem is then equivalent to finding the independence number of a thick graph where the thin graph is in some class. In particular, the class of thick trees was considered in~\cite{SalJ}, since the resulting tree-structured CSP can be solved efficiently.

Thick trees have also been studied as \emph{unipolar} graphs, e.g.~\cite{EW,McYo,Tysh}. In our terminology, these are \emph{thick stars}. So $G$ is a unipolar graph if it has nodes comprising a \emph{hub} $\bs h$ and leaves $\bs b_1, \bs b_2, \ldots, \bs b_s$, the \emph{satellites} and only the links $\bs h\bs b_i$ ($i\in[s]$). Polynomial time algorithms for the recognition problem have been given in~\cite{EW},~\cite{McYo} and~\cite{Tysh}. We give another in section~\ref{sec:unipolar} below. Since a star is a tree with at most one interior vertex, these are a subclass of thick forests.

More recently,  Kanj, Komusiewicz, Sorge and van Leeuwen~\cite{KKSL} have generalised this to give an FPT (fixed-parameter tractable) algorithm for recognising thick bipartite graphs using a bound on the size of one of the parts as the parameter. Thus these graphs have a ``hub'' which is a cluster graph of fixed size.

The thick graph concept is implicit in \emph{subcolourings} of graphs, introduced by Albertson, Jamison, Hedetniemi and Locke~\cite{AJHL}. A $q$-subcolourable graph is precisely a thick $q$-colourable graph. The recognition problem is to decide whether $G$ has any $q$-colourable thin graph $H$. In particular,  2-subcolourability is the problem of recognising a thick bipartite graph. Subcolourability has generated a literature of its own. See, for example,~\cite{BFNG,FJLS}.

Stacho~\cite{Stacho} considered 2-subcolourable chordal graphs. We show in section~\ref{sec:forbidden} that these are precisely chordal thick forests. Stacho gave a polynomial time recognition algorithm for this case, but his approach does not extend to general thick forests. We show in section~\ref{sec:recog} that all thick forests can be recognised in P, which implies Stacho's result as a special case. 

The case where the thin graph is a fixed graph $H$ was called an $(H,K)$-partition of the input graph $G$ by MacGillivray and Yu~\cite{MacYu}. As mentioned above, they gave an almost complete solution to the recognition problem in this case. They showed that the recognition problem is NP-complete unless the thin graph is triangle-free.  We review and improve their work in section~\ref{sec:parameter}. 

A similar question to that of \cite{MacYu} is that of \emph{matrix partitions}, e.g.~\cite{FederHell,THJS}. A fixed symmetric matrix $H$ is given which determines the structure of valid partitions. If the matrix is the adjacency matrix of $H$ but with 1's on the diagonal, 0's for the non-edges and $*$'s for the edges, then an $H$-partition of $G$ is precisely an $(H,K)$-partition of $G$. 

\subsection{Preliminaries}\label{sec:prelims}

We summarise our notation and definitions. Most readers will be able to use this section purely for reference.

$\IN$ will denote the set of positive integers. We write $[m] = \{1,2,\ldots, m\}$ for any $m\in \IN$,
For any integers $a\geq b\geq 0$, we write $(a)_b = a(a-1)\cdots (a-b+1)$
for the \emph{falling factorial}, where $(a)_0=1$ for all $a\geq0$, and $(a)_b=0$ if $b>a$ or $b<0$.

Throughout this paper, all graphs are simple finite and undirected.
For a graph $G=(V,E)$ we set $\VS(G)=V$, $\ES(G)=E$, with $n=|V|$ and $m=|E|$.
If $v\in \VS(G)$ or $e\in \ES(G)$ we may simply write $v\in G$ or $e\in G$.
We will write $G\cong H$ if $G$ is isomorphic to $H$.
If $e=\{u,v\}$ for $u,v\in V$, we will write $e=uv$.
A \emph{clique} is a graph having all possible edges, i.e. $E=V^{(2)}=\{vw : v,w\in V, v \ne w\}$.
A \emph{maximal} clique is not a proper subset of any clique in $G$. If $C$ is a clique 
and $A$ is a maximal clique containing it, we will say that we can \emph{expand} $C$ to $A$.

We use $A\uplus B$ to denote the disjoint union of sets $A,B$. The disjoint union
of graphs $G_1,G_2$ is then $G_1\uplus G_2=(V_1\uplus V_2, E_1\uplus E_2)$.
A \emph{cluster graph} is a disjoint union of cliques.

The \emph{complement} $\Gbar=(V,\bar{E})$ has $vw\in\bar{E}$ \ifff\ $v \ne w$ and $vw\notin E$.
$G$ is a clique \ifff $\Gbar$ is an \emph{independent set}.
If $G\in\sB$, the class of bipartite graphs, then $\Gbar$ is cobipartite, two cliques joined by the edges of a bipartite graph.

If $B=(V_1\cup V_2, E)$ is bipartite, its \emph{bipartite complement}  is the graph $\ddot{B}=(V_1\cup V_2, \ddot{E})$, where $\ddot{E}=\{v_1v_2:v_1\in V_1,v_2\in V_2, v_1v_2\notin E\}$.

We say $V_1$ is \emph{complete} to $V_2$ in $G$ if $v_1v_2\in E$ for all $v_i\in V_i$ ($i\in[2]$),
and \emph{anticomplete} to $V_2$ if $v_1v_2\notin E$ for all $v_i\in V_i$ ($i\in[2]$).
These relations are symmetrical between $V_1$ and $V_2$.

The term \emph{induced subgraph} will mean a vertex-induced subgraph,
and the subgraph of $G=(V,E)$ induced by the set $S$ will be denoted by $G[S]$.
A subgraph $G[S]$ is a component of $G$ if it is connected and there is no edge from $S$ to $V\sm S$.
If $G[S]\cong  H$ for some $S\subseteq V$, we say $G$ contains $H$.
A forbidden (induced) subgraph for $G$ is a graph $H$ such that $G$ does not contain $H$.

A class of graphs $\sC$ is \emph{hereditary} if for any $G=(V,E)\in\sC$ and any $S\subseteq V$,
we have $G[S]\in\sC$. A class $\sC$ is \emph{additive} (or \emph{union-closed}) if a graph is in $\sC$ whenever all
its connected components are in $\sC$.

The \emph{neighbourhood} $\{u\in V: uv\in E\}$ of $v\in V$ will be denoted by $\Nb(v)$,
and the \emph{closed} neighbourhood $\Nb(v)\cup\{v\}$ by $\Nb[v]$. The neighbourhood $\Nb(S)$ of a set $S\subseteq V$ is the set $\bigcup_{v\in S}\Nb(v)\sm S$, and $\Nb[S]=\bigcup_{v\in S}\Nb[v]$. We write $\dg(v)$ for the degree $|\Nb(v)|$ of $v$ in $G$.  

Two vertices $u,v\in V$ are called \emph{false twins} if $\Nb(u)=\Nb(v)$ and \emph{true twins} if $\Nb[u]=\Nb[v]$.

If $\es\subset V_1,V_2\subset V$, with $V_1\cap V_2=\es$, we denote the edge set $\{vw\in E:v\in V_1, w\in V_2\}$ by $V_1:V_2$. Then $V_1:V_2$ is a \emph{cut} in $G$ if $V=V_1\uplus V_2$.

A set $S\subseteq V$ is a \emph{cutset} or \emph{separator} of $G$ if, for some $\es\neq V_1,V_2\subseteq V$, we have $V=V_1\cup V_2$, $V_1\cap V_2=S$, and there is no edge $vw\in E$ with $v\in V_1\sm S$ and $w\in V_2\sm S$.
If $G[S]$ is a complete graph (clique), then $S$ is a \emph{\cs}. 

A set $S\subseteq V$ is an \emph{independent set} if we have $vw\notin E$ for all $v,w\in S$. The maximum size of an independent set in $G$ is denoted by $\alpha(G)$, the \emph{independence number} of $G$. We consider the problem of counting all independent sets in $G$, and we will denote this number by $\nind(G)$.
The size of the largest clique in $G$ is denoted $\omega(G)$, so $\omega(G)=\alpha(\Gbar)$.
A \emph{matching} $M\subseteq E$ in $G$ is a set of disjoint edges. That is, $e_1\cap_2=\es$ for all
$e_1,e_2\in M$.

A $q$\emph{-colouring} $\psi$ of the vertices of a graph $G$, with colour set $Q=[q]$, is a function $\psi:V\to Q$. The colour classes are the sets $\psi^{-1}(i)$ for $i\in[q]$. It is a \emph{proper} colouring if $\psi(u)\neq \psi(v)$ for all $uv\in E$, that is, all colour classes are independent sets. Here $\psi(S)=\{\psi(v): v\in S\}$ for $S\subset V$. By convention, the adjective ``proper'' is omitted if there is no ambiguity. We consider the problem of counting (proper) $q$-colourings of (the vertices) of $G$. Denote this number by $\ncol(G)$. The corresponding decision problem $\col(G)$ asks $\ncol(G)>0$? The minimum $q$ for which this is true is the \emph{chromatic number} $\chi(G)$ of $G$. A graph is a clique if $\chi(\Gbar)=1$. It is \emph{bipartite} if $\chi(G)\leq 2$, and its complement $\Gbar$ is \emph{cobipartite}.

A graph $G=(V,E)$ is \emph{perfect} if $\chi(G[S])=\omega(G[S])$ for all $S\subseteq V$. A ($k$-)\emph{hole} $G[S]$ in a graph is
an induced subgraph isomorphic to a cycle $C_k$  of length $k\geq4$, a \emph{long hole} if $k\geq 5$. (A 4-hole is often called a \emph{square}). A \mbox{($k$-)\emph{antihole}} is the complement $\bar{C_k}$ of a hole. Note that a 4-antihole $\bar{C_4}=2K_2$ is disconnected, so antihole usually means the complement of a long hole, that is, a long antihole. The \emph{strong perfect graph theorem} (SPGT)~\cite{ChRoST} shows that $G$ is perfect if and only if it contains no induced hole or antihole with an odd number of vertices. This clearly implies the earlier (weak) \emph{perfect graph theorem} (PGT)~\cite{Lovasz}, that $\Gbar$ is perfect if $G$ is perfect. It is easy to see that a bipartite graph is perfect, and hence a cobipartite graph $G$ is perfect by the PGT. In perfect graphs, a $q$-colouring and a maximum independent set can be found in polynomial time~\cite{GLS}, though only by an indirect optimisation algorithm. Exactly counting independent sets or $q$-colourings is \#P-complete in general~\cite{DyeGre} and approximate counting is \#BIS-hard~\cite{DyGoGJ}, even for the class of bipartite graphs. Thus positive counting results are possible only for restricted classes of perfect graphs.

A graph is \emph{chordal} (or \emph{triangulated}) if it has no hole. It is easy to see that it can have no long antihole: A 5-antihole is also a 5-hole, so forbidden. A hole of size 6 or more contains a $2K_2$, so its complement contains a 4-hole and is forbidden. Thus chordal graphs are perfect. A chordal graph can be recognised by finding a \emph{perfect elimination order} of its vertices. See~\cite{RTL}, which gives a linear time algorithm for constructing the ordering.

A graph is \emph{chordal bipartite} if it is bipartite and has no long holes. Note that these graphs are not chordal in general, despite the name.

\emph{Chain graphs}~\cite{Yannakakis} form the class of graphs which exclude triangles, 4-antiholes and 5-holes. Since a chain graph excludes $2K_2$, it excludes any hole or antihole of size greater than five, or path of more than three edges. Its components are connected bipartite graph with nested neighbourhoods. The complement of a chain graph is a cobipartite chordal graph, called a \emph{cochain} graph.
 
For graph theoretic background not given above, see Diestel~\cite{diestel}, for example.

Finally, we use standard terms from computational complexity without comment. For further information see, for example, Bovet and Crescenzi~\cite{BovCre} for a general introduction, Jerrum~\cite{Jerrum} for counting complexity and Fellows~\cite{Fellows} for parameterised complexity.
\subsubsection{Graph classes}
We will consider several related graph classes, so we give an indication of their relationship and our (nonstandard) notation in Fig.~\ref{fig:classes}. Here an upward line denotes strict inclusion, where the non-obvious inclusions will be justified later. Most of these classes are well studied. The main exceptions, $\thksf\sF$ and $\sQ$, are the topic of this paper.
\begin{figure}[hbtp]
  \centerline{
  \begin{tikzpicture}[xscale=2.0, yscale=1.2]
    \node (cc) at (2,0) {cochain};
    \node (tB) at (2,4) {$\thksf{\sB}$};
    \node (tF) at (2,2) {$\thksf{\sF}$};
    \node (BC) at (3,1) {$\thksf{\sB}\cap\sC_{\triangle}$};
    \node (qf) at (3,3) {$\sQ$};
    \node (ch) at (4,2) {$\sC_{\triangle}$};
    \node (P0) at (5,5) {$\sP_0$};
    \node (fo) at (4,0) {$\sF$};
    \node (wc) at (5,3) {weakly chordal};
    \node (cn) at (6,0) {chain};
    \node (cb) at (6,2) {chordal bipartite};
    \node (bs) at (6,4) {biclique separable};
    \node (T) at (3.5,5) {$\thksf\sT$};
    \node (tCq) at (2,6) {$\thksf{\fC_q}$};
    \node (tG) at (3.5,7) {$\thksf{\sG}=\sG$};
    \node (pe) at (5,6) {$\sP$};
    \draw (cc)--(BC)--(tF)--(tB);
    \draw (tF)--(qf)--(P0)--(pe)--(tG);
    \draw (fo)--(BC)--(ch)--(wc);
    \draw (fo)--(cb)--(wc)--(bs)--(P0);
    \draw (cn)--(cb)  (ch)--(qf); 
    \draw (tB)--(T)--(tG);
    \draw[dashed] (tB)--(tCq);
    \draw (tCq)--(tG);
  \end{tikzpicture}}\vspace{3ex}
    \centerline{
    \begin{tabular}{*{5}{l}}
      $\sP$             & perfect graphs & \hspace{10mm} &
      $\sF$             & forests \\
      $\sP_0$           & (long hole)-free perfect graphs &&
      $\sB$             & bipartite graphs ($\fC_2$)\\
      $\sQ$             & quasi thick forests &&
      $\sC_{\triangle}$ & chordal graphs\\
      $\fC_q$           & $q$-colourable graphs $(q>2)$&&
      $\sT$& triangle-free graphs\\
      $\sG$ & all graphs&&$\sG_k$ & graphs with at most $k$ vertices\\
    \end{tabular}}
  \caption{Graph classes and their inclusions}\label{fig:classes}
\end{figure}

\subsubsection{Treewidth}\label{ss:btw def}

A \emph{tree decomposition} of a graph $G=(V,E)$ is a pair $(T,b)$ where
$T=(I,A)$ is a tree and $b$ maps the nodes of $T$ to subsets of $V$ such that
\begin{enumerate}[topsep=0pt,itemsep=0pt,label=(\arabic*)]
\item for all vertices $v \in V$ exist an $i \in I$ such
  that $v \in b(i)$,\label{tw:ver}
\item for all edges $e \in E$ exist an $i \in I$ such that
  $e \subseteq b(i)$, and\label{tw:edg} 
\item for all vertices $v \in V$ the graph
  $T[\{i \mid v \in b(i)\}]$ is connected.\label{tw:str}
\end{enumerate}
Clearly, the connected graph in the third property 
is a subtree of $T$. For $i \in I$, the set $b(i) \subseteq V$ is called
a \emph{bag} of the tree decomposition.

Every graph $G=(V,E)$ has a tree decomposition with just one bag containing
all vertices. That is, $I=\{i\}$, $A=\es$ and $b(i)=V$. If $G$ is a tree then
we can choose a tree $T$ that is a subdivision of $G$. Assuming $V \cap E=\es$,
this can be formalised to $I = V \cup E$, $A = \{ve \mid v \in e, e \in E\}$,
$b(v)=\{v\}$ for all $v \in V$ and $b(e)=e$ for all $e \in E$. 

The \emph{width} of $(I,b)$ is the maximum size of a bag $b(i)$ minus one,
where $i \in I$. The \emph{treewidth} of $G$, denoted by $\tw(G)$, is the
minimum width of a tree decomposition of $G$.

For a clique $K_n$ on $n$ vertices, the tree decomposition with
a single bag is optimal, so $\tw(K_n)=n-1$. For a tree $G$ with at least one
edge, the decomposition mentioned above is optimal, so $\tw(G)=1$.

For every graph $G=(V,E)$ exists a tree decomposition $(T,b)$ of minimum width
such that $|I| \le |V|$, where $T=(I,A)$ as above. Such a $(T,b)$ can be
obtained from a tree decomposition of width $\tw(G)$ by contracting arcs
$ij \in A$ with $b(i) \subseteq b(j)$. In general, contracting an arc $ij\in A$
(and assigning the bag $b(i) \cup b(j)$ to the resulting vertex) leads to
another tree decomposition of $G$, possibly of larger width.

By the inverse operation of splitting nodes we can ensure, for every arc
$ij \in A$, that $b(i)$ and $b(j)$ differ by at most one vertex.
Formally, a tree decomposition $(T,b)$ is \emph{nice} if the tree $T=(I,A)$
has a root $r \in I$ with $b(r)=\es$ such that every node $i \in I$ is of one
of the following types:
\begin{description}[topsep=0pt,itemsep=0pt]
\item[leaf] For every leaf $l$ of $T$ we have $b(l)=\es$.
\item[introduce] An introduce node $i$ has exactly one child $j$,
  $b(i) \sm b(j)$ is a singleton and $b(j) \subset b(i)$ holds.
\item[forget] An forget node $i$ has exactly one child $j$,
  $b(j) \sm b(i)$ is a singleton and $b(i) \subset b(j)$.
\item[join] A join node $i$ has exactly two children $j$ and $k$ such that
  $b(i)=b(j)$ and $b(i)=b(k)$ hold.
\end{description}
A slightly less restricted version of nice tree decompositions was given
in \cite{Kloks}. Every graph $G=(V,E)$ has a nice tree decomposition
$(T,b)$ of minimum width such that $|I| = \cO(|V|)$.

\section{Thick graphs}\label{sec:thickgraphs}
A \emph{thick vertex}, or \emph{node}, is a clique, and a \emph{thick edge}, or \emph{link}, is a cobipartite graph. Informally, a \emph{thick graph} is a graph with nodes and links. Where we use these terms simultaneously, it will imply that vertex\,$\subseteq$\,node\,$\subseteq$\,thick vertex and/or edge\,$\subseteq$\,link\,$\subseteq$\,thick edge. 

Formally, we define a thick graph as follows. Let $G=(V,E)$ and $H=(\cV,\cE)$ be graphs,
where $n=|V|,\nu=|\cV|$. A surjective function $\psi: V \to \cV$ such that
\begin{enumerate}[topsep=0pt,itemsep=0pt]
\item\label{m:ver} for all $\bs v \in \cV$ the set $\psi^{-1}(\bs v)\subseteq V$ is a clique in
  $G$, and
\item\label{m:edg} for all $uv \in E$ we have $\psi(u)=\psi(v)$ or
  $\psi(u)\psi(v) \in \cE$,
\end{enumerate}
will be called a \emph{model} $(H,\psi)$ of $G$. Then $G$ is the thick graph and $H$ is the thin graph. Intuitively, $G$ is given by identifying each vertex in $\bs u\in\cV$ with a node (clique) $\bs u\subseteq V$ and each edge in $\cE$ with a link (cobipartite graph) $\bs{vw}$ in $G$.

If $\bs u\in \cV$ then $\psi^{-1}(\bs u)$ is a clique in $G$, an $r$-clique $\{u_1,u_2,\ldots,u_r\}$ in $G$, we will identify the node $\bs u$ with this clique, so $G[\bs u ]\cong  K_r$. Similarly, if $\bs{vw}\in \cE$, the link $\bs{vw}$ will be identified with the cobipartite graph $G[\bs u\cup\bs w]$. Thus we will omit the term ``thick'' if it is clear from the context.

We will write $\thksf{H}=\{G:G$ has a model $(H,\psi)\}$ and $\thin{G}=\{H:G$ has a model $(H,\psi)\}$. Clearly $H\in\thksf H$ and $G\in\thin G$, by taking $\psi=id$, the identity map. 

Clearly $\thin G$ corresponds to the set of \emph{clique covers} of $G$. For $G$ an independent set, $|\thin G|=1$. For $G$ an $n$-clique, $|\thin G|=B_n$, the $n$th Bell number. For triangle-free $G$,  $|\thin G|$ is the number of matchings in $G$. Thus the size of $\thin G$ is a \#P-complete quantity in general~\cite{Valiant}. On the other hand, \thksf H is always countably infinite unless $H$ is the empty graph $(\es,\es)$. 

The \emph{clique cover number} of $G$ is $\theta(G)=\chi(\Gbar)=\min\{|\cV|:H\in\thin G\}$. So asking if $\theta(G)\leq k$ is equivalent to $G\in\thksf{\sG_k}$, where $\sG_k$ is the class of graphs with at most $k$ vertices.

If $\sC$ is any graph class, we will write the class $\bigcup_{H\in\sC}\thksf{H}$ as \thksf\sC. This class is hereditary if $\sC$ is hereditary, since cluster graphs are hereditary. Thus \thksf{\cdot} is a heredity-preserving operator on graph classes.

It should be observed that graphs in $\thksf\sC$ may have very different properties from graphs in $\sC$.
For example, bipartite graphs are perfect, but a thick bipartite graph is not necessarily perfect, see Fig.~\ref{fig:cycles}. More obviously, a tree has no holes, but even a thick edge can have $\Theta(n^4)$ 4-holes.

Thus $G\in\thksf\sC$ if there is any $H\in\sC$ such that $G\in\thksf{H}$. For 
given $G$ and fixed $\sC$, the problem of deciding if $G\in\thksf\sC$ is the 
\emph{recognition} problem for $\thksf\sC$. More generally, given two 
hereditary classes $\sC_1,\sC_2$ and $G\in\sC_1$, we might ask whether there 
is any $H\in\sC_2$ such that $H\in\thin G$. This is the recognition problem 
for the class $\thksf{\sC_2}\cap\sC_1$, which we will write as  
$\thksf{\sC_2|\sC_1}$. When $\sC_1$ is the class $\sG$ of all graphs, this is 
simply $\thksf{\sC_2}$. When $\sC_2$ is $\sG$ this is $\sC_1$. The class of 
thick graphs which are in $\sC_1$ is precisely $\sC_1$, because $G\in\thksf 
G$. Equivalently, we ask whether there is any $H\in\thin G$ for $G\in\sC_2$ 
such that $H\in\sC_1$. In general, this problem is NP-complete. Let $\sT$ be 
the class of triangle-free graphs. 
\begin{lem}\label{lem:trianglerecog}
  Let $\sC$ be any class such that $\sC\nsubseteq\sT$, then the recognition problem for $\thksf\sC$ is NP-complete.
\end{lem}
\begin{proof}
 Otherwise some $H=(\cV,\cE)\in\sC$ contains a triangle $T=(abc)$. We construct $G$ has follows. Let $G_0=(V_0,E_0)$ be an instance of recognising $G_0\in\thksf T$. This is NP-complete since it is 3-colourability of $\bar{G_0}$. We may assume $G_0$ contains an independent set $\{a_0,b_0,c_0\}$ of size 3. Otherwise $\chi(\bar{G_0})\leq 2$ so $\bar{G_0}$ is bipartite, and hence $G_0$ is cobipartite, and recognition is in $P$.
 See section~\ref{sec:unipolar}. 
 
 To construct $G=(V,E)$ we first replace $T$ by $G_0$, and for each $v\in \cV\sm\{a,b,c\}$, we set $v\in V$. If $vw\in\cE$, then if $w\notin \{a,b,c\}$ set $vw\in E$, and if $w\in \{a,b,c\}$, set $vw_0\in E$. Then $G_0$ has a model $(T,\psi_0)$ \ifff  $G$ has a model $(H,\psi)$ with $\psi(v)=\psi_0(v)$ for $v\in V_0$ and $\psi(v)=v$ for $v\in V\sm V_0$. Thus deciding $G\in\thksf H$ is equivalent to deciding $G_0\in\thksf T$, so is NP-complete.
\end{proof}
Thus tractable recognition is only possible for classes of triangle-free thin graphs. 

\subsection{Full links and full graphs}\label{sec:loose}
A \emph{full link} is a link which is a clique. Equivalently, the cut between its end nodes is a complete bipartite graph. These can be problematic since they can also be viewed as thick vertices. For a thick triangle, if any edge is full, this may be viewed as a thick edge. Consider the triangle $\msf{abc}$ in Fig.~\ref{fig:fulledge}. The heavy lines represent full links, the other solid lines non-full but nonempty links, and the dashed lines non-full but possibly empty links.  If $\msf{ab}$ is the only full link then the triangle $(\msf{abc})$ is the thick edge $\msf{ab:c}$. If both $\msf{ab}$ and $\msf{bc}$ are thick edges, the triangle can be either of the two thick edges, $\msf{ab:c}$ or $\msf{a:bc}$. If all three $\msf{ab}$, $\msf{bc}$ and $\msf{ac}$ are full then the triangle can be any of three thick edges $\msf{ab:c}$, $\msf{a:bc}$, $\msf{ac:b}$, or the thick vertex $\msf{abc}$. Thus there can be a choice in how the triangle is modelled. We will call these \emph{loose} triangles, and we will not model them as thick triangles. A thick triangle with no full links will be called a \emph{stiff} triangle. The possible ambiguity in loose triangles causes a difficulty in the recognition of thick trees in section~\ref{sec:recog}. We will consider this in section~\ref{sec:adjust}. 

We might also have a 4-cycle $(\msf{abcd})$ as in  Fig.~\ref{fig:fulledge}. This represents a thick edge \ifff either of the pairs of non-adjacent edges $\msf{ab},\msf{cd}$ or $\msf{ac},\msf{bd}$ are both full. If all four edges are full, there is a choice of representation as a thick edge. We will call these loose 4-cycles, and any other a stiff 4-cycle. These 4-cycles cannot have chords, or the 4-cycle is two thick triangles with a common link.

Note that, if $G$ contains a stiff triangle or 4-cycle, we can reject it if we are trying to recognise $G\in\thksf\sF$.  

\begin{figure}[htb]
  \centering
  \begin{tikzpicture}[xscale=0.8,yscale=1,font={\sffamily\scriptsize}]
  \begin{scope}
    \node[v] (a) at (4,6) {a};
    \node[v] (b) at (3,5) {b};
    \node[v] (c) at (5,5) {c};
    \draw[ultra thick] (a)--(b) ;
    \draw[thin] (a)--(c)--(b);
  \end{scope}
  \begin{scope}[xshift=4.5cm]
    \node[v] (a) at (4,6) {a};
    \node[v] (b) at (3,5) {b};
    \node[v] (c) at (5,5) {c};
    \draw[ultra thick] (a)--(b)--(c) ;
    \draw[thin] (a)--(c);
  \end{scope}
  \begin{scope}[xshift=9cm]
    \node[v] (a) at (4,6) {a};
    \node[v] (b) at (3,5) {b};
    \node[v] (c) at (5,5) {c};
    \draw[ultra thick] (a)--(b)--(c)--(a) ;
  \end{scope}
  \begin{scope}[xshift=-0.5cm]
    \node[v] (a) at (6,4) {a};
    \node[v] (b) at (8,4) {b};
    \node[v] (d) at (6,3) {d};
    \node[v] (c) at (8,3) {c};
    \draw[ultra thick] (a)--(b) (c)--(d) ;
    \draw[thin] (b)--(c) (d)--(a) ;
  \end{scope}
  \begin{scope}[xshift=4.5cm]
    \node[v] (a) at (6,4) {a};
    \node[v] (b) at (8,4) {b};
    \node[v] (d) at (6,3) {d};
    \node[v] (c) at (8,3) {c};
    \draw[ultra thick] (a)--(b)--(c)--(d)--(a) ;
  \end{scope}
  \end{tikzpicture}\caption{Loose triangles and 4-cycles}\label{fig:fulledge}
\end{figure}

We will call a thick graph which has only full links a \emph{full graph}. These form a subclass of thick graphs. They can clearly have no stiff triangles. So it might seem from the above that we can assume that full graphs are triangle-free, but this is not so. Reducing a full triangle to a thick edge or vertex will generally lead to a graph which is no longer full.

To illustrate, consider the graph in Fig.~\ref{fig:fullgraph}. The only model as a full graph is the identity. It has two models as a stiff triangle with nodes $\{0,1\},\{2,3\},\{4,5\}$ and $\{1,2\},\{3,4\},\{5,0\}$ respectively. It has six models as a thick \pth2, with the nodes
$\{i,i+1,i+2\},\{i+3,i+4,i+5\}$ $\pmod 6$ for $i\in[6]$.

\begin{figure}[hbtp]
\centerline{\begin{tikzpicture}[scale=1.3,line width=0.75pt,font={\sffamily\scriptsize},rotate=90]
\foreach \x in {0,1,2,3,4,5}{\coordinate (\x) at (\x*60:1) {};}
    \foreach \x in {0,1,2,3,4,5}{\node[v] (v\x) at (\x) {\x};}
    \draw (v0)--(v1)--(v2)--(v3)--(v4)--(v5)--(v0);
    \draw (v0)--(v2)--(v4)--(v0) (v1)--(v3)--(v5)--(v1);
\end{tikzpicture}}\caption{Full links}\label{fig:fullgraph}
\end{figure}

Let \fulsf H denote the class of graphs obtained by replacing the vertices of the graph $H$ with cliques and its edges by full links. Then, if $\sC$ is any graph class, let $\fulsf\sC=\bigcup_{H\in\sC}\fulsf{H}$. Thus \fulsf\cdot is again an operator on graph classes. Clearly $\fulsf\sC\subset\thksf\sC$ for any $\sC$.

In general, graphs in $\thksf\sC$ usually have very different properties from graphs in $\sC$.
For example, bipartite graphs are perfect, but a thick bipartite graph is not necessarily perfect. See Fig.~\ref{fig:cycles}. More obviously, a tree has no holes, but a link may have an exponential number of 4-holes. By contrast, $\fulsf\sC$ inherits many properties from $\sC$.

\begin{figure}[hbtp]
\centerline{\begin{tikzpicture}[line width=0.5pt]
\begin{scope}
    \foreach \x in {0,1,2,3}{\coordinate (\x) at (\x*90:1) {};}
    \foreach \x in {0,1,2,3}{\node[v,fill=black] (v\x) at (\x) {};}
    \draw (v1)--(v2)--(v3)--(v0)--(v1);
\end{scope}
\begin{scope}[xshift=4.5cm]
    \foreach \x in {0,1,2,3,4}{\coordinate (\x) at (54+\x*72:1) {};}
    \foreach \x in {0,1}{\node[v,fill=black] (v\x) at (\x) {};}
    \foreach \x in {2,3,4}{\node[v,fill=black] (v\x) at (\x) {};}
    \draw (v1)--(v2)--(v3)--(v4)--(v0);
    \draw[line width=2.5pt] (v0)--(v1);
\end{scope}
\begin{scope}[xshift=9cm]
    \foreach \x in {0,1,2,3,4}{\coordinate (\x) at (54+\x*72:1) {};}
    \foreach \x in {0,1}{\node[v,fill=black] (v\x) at (\x) {};}
    \foreach \x in {2,3,4}{\node[v,fill=black] (v\x) at (\x) {};}
    \draw (v1)--(v2)--(v3)--(v4)--(v0)--(v2) (v1)--(v4);
    \draw[line width=2.5pt] (v0)--(v1);
\end{scope}
    \end{tikzpicture}}
\caption{$C_4$, a thick $C_4$ and a full $C_4$}
  \label{fig:cycles}
\end{figure}

The following is essentially the content of section 4.2.1 of \cite{DGM}. 
\begin{lem}[Dyer,\,Greenhill,\,M\"uller]\label{lem:full}
Let $\cF$ be a set of graphs, none of which contains true twins, and let $\sC$ be the graph class with minimal forbidden subgraphs $\cF$. Then $\fulsf\sC=\sC$.
\end{lem}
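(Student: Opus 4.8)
The plan is to prove the two inclusions $\sC\subseteq\fulsf\sC$ and $\fulsf\sC\subseteq\sC$ separately. The first is immediate and uses nothing about $\cF$: any $G\in\sC$ lies in $\fulsf G$ (take every thick vertex of size one), hence $G\in\fulsf\sC$. All the content is in the reverse inclusion, and here I would argue by contradiction. Since $\sC$ is precisely the class of graphs containing no member of $\cF$, suppose $G=(V,E)\in\fulsf\sC\sm\sC$. Fix a model $(H,\psi)$ with $H=(\cV,\cE)\in\sC$ in which every thick edge is complete bipartite, witnessing $G\in\fulsf H$; and since $G\notin\sC$, fix $S\subseteq V$ and $F\in\cF$ with $G[S]\cong F$. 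The goal is to manufacture from $S$ an induced copy of $F$ \emph{inside $H$}, contradicting $H\in\sC$.

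The crucial observation is that in a full graph any two vertices $u,w$ with $\psi(u)=\psi(w)=\bs v$ are true twins of $G$: since every thick edge at $\bs v$ is complete bipartite, $\Nb[u]=\psi^{-1}(\bs v)\cup\bigcup\{\psi^{-1}(\bs x):\bs v\bs x\in\cE\}=\Nb[w]$. Moreover this property is inherited by induced subgraphs containing both vertices: if $u,w\in S$, their closed neighbourhoods within $G[S]$ are $\Nb[u]\cap S=\Nb[w]\cap S$, so $u,w$ would be true twins of $G[S]\cong F$. As $F$ contains no true twins by hypothesis, no two vertices of $S$ can lie in a common thick vertex; hence $\psi|_S$ is injective, and I set $T=\psi(S)$, so $|T|=|S|$ and $H[T]$ is an induced subgraph of $H$.

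It then remains to verify that $\psi|_S\colon G[S]\to H[T]$ is a graph isomorphism. It is a vertex bijection by injectivity, and for distinct $u,w\in S$ with $\bs a=\psi(u)\neq\bs b=\psi(w)$ the definition of $\fulsf H$ forces the bipartite graph between $\psi^{-1}(\bs a)$ and $\psi^{-1}(\bs b)$ to be complete when $\bs a\bs b\in\cE$ and empty otherwise; thus $uw\in E\iff\bs a\bs b\in\cE\iff\psi(u)\psi(w)\in\cE$. Hence $H[T]\cong G[S]\cong F$, so $H$ contains $F\in\cF$, contradicting $H\in\sC$. This yields $\fulsf\sC\subseteq\sC$ and completes the proof.

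I do not expect a real obstacle: the argument is short once the right observation is isolated. The one point deserving attention is the injectivity step, where the no-true-twins hypothesis on $\cF$ is used in an essential way — and it is genuinely needed, since $K_3$ has a pair of true twins and thus can never be a forbidden subgraph covered by the lemma, consistently with $\fulsf\sF\neq\sF$ (blowing up one endpoint of $K_2\in\sF$ to a $2$-clique already produces $K_3\notin\sF$). By contrast the holes $C_k$ ($k\geq4$) have no true twins, so the lemma applies to the chordal graphs and gives $\fulsf{\sC_{\triangle}}=\sC_{\triangle}$, a reassuring special case.
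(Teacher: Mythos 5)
Your proof is correct and complete. The paper itself only sketches the easy inclusion $\sC\subseteq\fulsf\sC$ (a vertex is a $1$-clique, an edge a $2$-clique) and delegates the hard inclusion $\fulsf\sC\subseteq\sC$ entirely to Lemma~5 of~\cite{DGM}, so there is no in-text argument to compare against; you have supplied the missing argument directly. Your structure --- observe that in a full thick graph vertices sharing a thick vertex are true twins of $G$, that true-twinhood passes to induced subgraphs, hence $\psi$ restricted to any induced copy of $F\in\cF$ is injective, and then that $\psi$ is in fact an isomorphism onto an induced copy of $F$ in $H$ because for a full model adjacency in $G$ is completely determined by adjacency in $H$ (complete bipartite between adjacent thick vertices, and, by condition~(\ref{m:edg}) of the model, anticomplete between non-adjacent ones) --- is exactly the natural argument one would expect to find behind the citation, and your closing sanity checks (the $K_3$ obstruction for $\sF$, the applicability to $\sC_\triangle$) are apt.
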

\begin{proof}
  Clearly $H\in\fulsf H$, because a vertex is a 1-clique and an edge is a 2-clique. Thus $\sC\subseteq\fulsf\sC$. That $\fulsf H\subseteq\sC$ if $H\in\sC$ is Lemma~5 of~\cite{DGM}.
\end{proof}
Thus, for classes $\sC$ satisfying the assumption of Lemma~\ref{lem:full},  \fulsf\cdot is the identity operator. Many hereditary classes satisfy Lemma~\ref{lem:full}: unit interval, chordal, weakly chordal, perfect and even-hole-free graphs, for example. 

It is also proved in Lemma~5 of \cite{DGM} that, for any class having a forbidden subgraph containing true twins,
Lemma~\ref{lem:full} no longer holds. Thus it is false, for example, in any subclass of triangle-free graphs, 
since any two vertices of a $k$-clique are true twins for all $k>2$.
 
Lemma~\ref{lem:full} can be useful. It was used in~\cite{DGM} to simulate vertex weights in the independent set problem for graphs of bounded bipartite pathwidth. It was used by Lovasz~\cite{Lovasz} in the proof of the (weak) PGT, although without explicitly assuming that the class $\sP$ satisfies the condition of Lemma~\ref{lem:full}, which follows from the SPGT. 

The recognition problem for \thksf\sC is the same as that for $\sC$ for any class satisfying Lemma~\ref{lem:full}, but
that is not true for subclasses. For example, $\sB\subset\sP$, and $\sP$ satisfies Lemma~\ref{lem:full}, so $\thksf\sP=\sP$. But, even though the recognition problem for $\sB$ is almost trivial, and recognition algorithms for $\sP$ are known~\cite{CSSS}, the status of the recognition problem for \fulsf\sB is unclear. Even recognising \fulsf\sF is a different question from recognising \thksf\sF, even though $\fulsf\sF\subset\thksf\sF$. For example, a $k$-star has $k+1$ models in $\thksf\sF$, but a unique model in \fulsf\sF. On the other hand, graphs in $\fulsf\sF$ are chordal, whereas those in $\thksf\sF$ are generally not.

So, full graph classes are not without interest, but we do not consider them further here.

\subsection{Subcolouring}\label{sec:subcol}
The \emph{subchromatic number}~\cite{AJHL} $\chiss(G)$ of $G$ is $\min\{\chi(H):H\in\thin G\}$ is. A graph is \emph{$q$-subcolourable} if $\chiss(G)\leq q$. This is equivalent to asking if $G\in\thksf{\fC_q}$, where $\fC_q$ is the class of $q$-colourable ($q$-partite) graphs. 

A graph $G$ has $\chiss(G)=1$ \ifff it is a \emph{cluster graph}, or $G\in\thksf\sI$, where $\sI$ is the class of independent sets. Also $\chiss(G)=2$ \ifff $G\in\sB$, where $\sB=\fC_2$ is the class of bipartite graphs. We will be interested in the class $\thksf\sF$ of \emph{thick forests}, where $\sF\subset\sB$ is the class of \emph{forests}. We consider the recognition problem for thick forests in section~\ref{sec:recog}, and counting independent sets and colourings in sections~\ref{sec:indsets} and~\ref{sec:colouring}.

A \emph{generalised $q$-colouring} of a graph $G=(V,E)$ with a graph class $\cC$ is a partition of $V$ into subsets $V_1,V_2,\ldots,V_q$ such that $G[V_i]\in\cC$ for all $i\in[q]$. If $\sC$ is the class of cluster graphs, then a generalised $q$-colouring is a $q$-\emph{subcolouring}~\cite{AJHL}. So $G$ has  1-subcolouring if it is a cluster graph and has a 2-subcolouring if it is a thick bipartite graph. Unfortunately, whether $G$ has a $q$-subcolouring is an NP-complete question for any $q\geq2$~\cite{farrugia}. 

\begin{figure}[hbtp]
\centerline{\begin{tikzpicture}[line width=0.5pt]
    \foreach \n/\x in {red/90, blue/210,darkgreen/330}{
    \foreach \y in {1,2,3,4}{
    \node[v,fill=\n] (\n\y) at (\x:0.5+0.5*\y) {};}
    \draw[\n] (\n1)--(\n2)--(\n3)--(\n4) ;
    \draw[\n] (\n4) to[out=\x-125,in=\x-50] (\n1);
    \draw[\n] (\n4) to[out=\x-140,in=\x-50] (\n2);
    \draw[\n] (\n1) to[out=\x+50,in=\x+120] (\n3);}
    \foreach \xx in {2,4}{\foreach \yy in {1,4}
    \draw[line width=0.5pt] (red\xx)--(blue\yy) (blue\xx)--(darkgreen\yy) (darkgreen\xx)--(red\yy);}
  \end{tikzpicture}}
\caption{A thick triangle with a 3-subcolouring}
  \label{fig:thicktriangle}
\end{figure}

\begin{lem}\label{lem:thickNP}
It is NP-complete to decide  whether $G\in\thksf{\fC_q}$ for any $2\leq q=\cO(n^{1-\eps})$, where $0<\eps<1$ is any constant.
\end{lem}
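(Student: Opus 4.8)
The plan is to prove the lemma by reduction from the NP\nobreakdash-complete problem of recognising a thick bipartite graph, i.e.\ of deciding whether a given $G_0$ is $2$-subcolourable (this is the case $q=2$ of the result of~\cite{farrugia} quoted above, and equivalently $G_0\in\thksf{\fC_2}$). Membership in NP is immediate: guess a map $\psi:V(G)\to[q]$ (of polynomial size, since $q=\cO(n)$) and check in polynomial time that every colour class induces a $P_3$-free graph. For constant $q$ the hardness is already~\cite{farrugia}, so the substance is to handle a growing $q=q(n)\le cn^{1-\eps}$.

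The heart of the reduction is a gadget that ``consumes'' all but two colours. Fix a target value $Q$ and put $k=Q-2$. Let $W=K_{k\times k}$ be the complete $k$-partite graph with every part of size $k$. First I would check that $\chiss(W)=k$: if $S\subseteq V(W)$ induces a cluster graph then either $S$ lies in a single part or $S$ contains at most one vertex of each part — otherwise two vertices $x,x'$ of one part and a vertex $y$ of another induce a $P_3$ on $x,y,x'$ — so $|S|\le k$ in all cases; hence $\chiss(W)\ge k^2/k=k$, while colouring each part with its own colour gives $\chiss(W)\le k$. Now set
\[
  D \;=\; W_1\uplus W_2\uplus W_3\uplus W_4,
\]
four vertex-disjoint copies of $W$. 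Two properties matter: \textup{(i)} $D$ has a $k$-subcolouring (colour all four copies alike), and \textup{(ii)} in every $Q$-subcolouring of $D$ at most two colour classes induce a clique of $D$ (the empty graph counting as a clique). Property \textup{(ii)} is the crux: a clique of $D$ lies inside one copy, so if three colour classes induced cliques they would meet at most three copies, leaving some copy $W_j$ disjoint from all of them and hence covered by the remaining $\le Q-3=k-1$ colour classes, contradicting $\chiss(W_j)=k$.

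Given $G_0$ on $n_0$ vertices I would output the graph $G$ obtained by taking the join of $G_0$ with $D$ (all edges between $V(G_0)$ and $V(D)$) together with $p$ isolated vertices, where $Q\ge2$ and $p\ge0$ are chosen so that $n=|V(G)|=n_0+4(Q-2)^2+p$ satisfies $q(n)=Q$; since $4(q(n)-2)^2=\cO(n^{2-2\eps})$, one can always take the least $n$ with $n\ge n_0+4(q(n)-2)^2$, which is polynomially bounded. For correctness: if $G_0=A\uplus B$ with $G_0[A],G_0[B]$ cluster graphs, colour $A,B$ by $1,2$, colour every copy of $W$ by a fixed $k$-subcolouring using the colours $3,\dots,Q$, and colour the isolated vertices $1$; no colour class then meets both $V(G_0)$ and $V(D)$, so this is a valid $Q$-subcolouring of $G$. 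Conversely, if $\psi$ is a $Q$-subcolouring of $G$ and $v\in V(G_0)$ has colour $c$, then $\psi^{-1}(c)\cap V(D)$ must be a clique of $D$: two non-adjacent vertices $x,y$ there would, together with $v$ (adjacent to all of $V(D)$), induce a monochromatic $P_3$ on $x,v,y$. By \textup{(ii)} there are at most two such colours $c$, so $\psi$ uses at most two colours on $V(G_0)$, witnessing that $G_0$ is $2$-subcolourable.

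The main obstacle I anticipate is purely quantitative: $W=K_{k\times k}$ has $k^2$ vertices, so the estimate $4(q(n)-2)^2=o(n)$ only holds for $\eps>\tfrac12$. To push $q$ up to $n^{1-\eps}$ for small $\eps$ one needs a graph $W$ with subchromatic number \emph{exactly} $k$ on only $k^{1+o(1)}$ vertices, which forces one out of the complete-multipartite setting and into (explicit) Ramsey-type graphs, where every induced cluster subgraph has $k^{o(1)}$ vertices; pinning the subchromatic number down to exactly $k$ rather than merely $\ge k$ is the delicate point, whereas the combinatorial core — the join-gadget of \textup{(i)}--\textup{(ii)} — is unaffected.
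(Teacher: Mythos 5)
Your reduction is a genuinely different route from the paper's: the paper invokes Farrugia's theorem on generalised colourings inductively, taking $\mathscr{C}_1$ to be the cluster graphs and $\mathscr{C}_2=\thksf{\fC_{q-1}}$, whereas you build an explicit many-one reduction from $2$-subcolourability via a join gadget. The combinatorial core of your reduction is correct: properties~(i) and~(ii) of $D=4\,K_{k\times k}$ hold as you argue, and both directions of the equivalence between $G_0\in\thksf{\fC_2}$ and $G\in\thksf{\fC_Q}$ go through. In particular, four copies is exactly the right number to rule out three clique colour classes in~(ii), and counting the empty class as a clique there is what makes the converse direction work, since a colour appearing on $V(G_0)$ may miss $V(D)$ entirely.

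The gap is the one you flag yourself, and it is genuine: $|V(D)|=4(Q-2)^2=\Theta(q^2)$, so the padding step $n\geq n_0+4(q(n)-2)^2$ forces $q=\cO(\sqrt{n})$ and hence covers only $\eps>\nfrac12$, whereas the lemma is claimed for every constant $\eps\in(0,1)$. Your diagnosis of what would close it is also on target: one needs $W$ with $\chiss(W)=k$ on $k^{1+o(1)}$ vertices. Such graphs do exist in principle --- a Ramsey graph $R$ on $m$ vertices with $\alpha(R),\omega(R)=\cO(\log m)$ has no induced cluster subgraph larger than $\alpha(R)\,\omega(R)=\cO(\log^2 m)$, so $\chiss(R)\geq m/\cO(\log^2 m)$ --- but, as you note, certifying $\chiss(W)$ exactly (the lower bound needed for~(ii) together with an explicit $k$-subcolouring needed for~(i)) is the non-routine step, and you do not supply it. As written the proposal proves the lemma only for $\eps>\nfrac12$; the remaining range needs either the Ramsey-type gadget carried through carefully, or a switch to the paper's inductive route, which avoids the gadget-size issue entirely (though the paper's own handling of the growing-$q$ case is terse and would also bear spelling out).
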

\begin{proof}
  This follows from a result of Farrugia~\cite{farrugia} (see also~\cite{Brown}) on generalised colourings. Let $\mathscr{C}_1,\mathscr{C}_2$ be two additive hereditary graph classes with recognition in NP, neither being the class of edgeless graphs. Let  $\psi$ be a 2-colouring of an input graph $G=(V,E)$ with $C_i=\psi^{-1}(i)$ so that $G[C_i]\in\mathscr{C}_i$ for $i\in[2]$. Then it is NP-complete to decide whether such a 2-colouring exists. For a bipartite thick graph, we take $\mathscr{C}_1$ and $\mathscr{C}_2$ to be the cluster graphs. 
This class is clearly additive and hereditary, and recognition is in P. Then the existence of $\psi$ is equivalent to asking whether $G$ is a thick bipartite graph.

This can then be extended to asking whether $G$ is a thick $q$-partite graph by inductively taking the same $\mathscr{C}_1$ and taking $\mathscr{C}_2$ to be $\thksf{\fC_{q-1}}$. This remains valid if $q=\cO(n^{1-\eps})$,  since polynomial in $n^{\eps}$ is equivalent to polynomial in $n$.
\end{proof}

Note that the additive condition used in Lemma~\ref{lem:thickNP} is important, or recognition of unipolar graphs would be NP-complete when it is not, as we discuss in section~\ref{sec:unipolar}.  Also, Lemma~\ref{lem:thickNP} does not necessarily hold if $G$ is restricted to lie in some graph class. For example, deciding whether $G\in\thksf{\fC_q|\sC}$ is shown in~\cite{BFNG} to be in P if $\sC$ is the class of interval graphs or permutation graphs. We note that these three classes are subclasses of perfect graphs $\sP$. However perfection is clearly not a sufficient condition, since $\sB\subset\sP$. Moreover, $q$-subcolouring for $q>2$ is NP-complete even in the class of chordal graphs~\cite{Stacho}. That is, the recognition problem for $\thksf{\fC_q\mid\sC_\triangle}$ is NP-complete for $q>2$.

\subsection{Altering the model}\label{sec:mod}
Given a model $(H,\psi)$ of $G$ we will consider how $\psi$ can be changed so that $H'$ corresponds to an operation on $H$. In particular, we are interested in modifications which give $H'=H$.

Given $G$ and $H,H'\in\thin G$ we say that $u\in\bs u\in \cV$ is \emph{movable} to $\bs w\in \cV'$ if the model $(\psi,H)$ of $G$ can be modified to  a model $(\psi',H')$ of $G$ so that $\psi'(u)=\bs w'=\bs w\cup\bs u$ and $\psi'(v)=\psi(v)$ for all $v\neq u$. 
\begin{lem}\label{lem:movable}
 A vertex $u\in\bs u$ is movable to $\bs w$ \ifff $\bs u\cup \bs w\subseteq\Nb[u]$.
\end{lem}
\begin{proof}
  We must have $\bs w\cup u$ a clique, which is true \ifff $u$ is complete to $\bs w$, which is equivalent to $\bs w\subseteq\Nb(u)$. Also $\bs u\sm u\subseteq\Nb(u)$ since $\bs u$ is a clique. Conversely, if $\bs w\cup \bs u\nsubseteq\Nb[u]$ then there is a $w\in\bs w$ such that $uw\notin \ES(G)$. So $\bs w\cup u$ is not a clique,
  a contradiction. 
\end{proof}
\begin{cor}\label{cor:neighbour}
A vertex $u\in \bs u\in \cV$ is movable to $\bs w\in \cV'$ only if 
$\bs{uw}\in \cE$ or $\bs w=\{u\}$.\qed 
\end{cor}
It follows that $\thin G$ is connected by vertex moves, since any $H\in\thin G$ is connected to $G\in\thin G$ by a sequence of $\cO(n)$ moves, by dismantling the nodes of $H$ to single vertices, then reassembling them to give $H'$. 

Most moves in $G$ change $H$, since moving $u$ can produce a new edge in  $H'$. We will call move the \emph{good} if $H'=H$ after the move. We could give a general criterion for move to be good, but we will restrict attention to the triangle-free case, $H\in\sT$.

\begin{lem}\label{lem:goodmove}
If $H\in\thin G\cap\sT$ and $\bs{u,w}\in H$ then moving $u\in\bs u$ to $\bs w$ is a good move \ifff $\{u\}\subset\bs u$ and $\Nb[u]=\bs u\cup\bs w$. The reverse of a good move is a good move.
\end{lem}
\begin{proof}
  If $\{u\}\subset\bs u$ and $\Nb[u]=\bs u\cup\bs w$ then moving $u$ to $\bs w$ does not change $H$, since $u$ is movable by Lemma~\ref{lem:movable} and has no neighbours outside $\bs u$ and $\bs w$. Conversely, suppose $v\in\Nb(u)$ but $v\notin\bs u\cup\bs w$. Then $u\in\bs w'\neq\bs w$ for some neighbour $\bs w'$ of $\bs u$. Thus moving $u$  produces an edge $uv$ between $\bs w\cup u$ and $\bs w'$, so the resulting $H'\in\thin G$ contains a triangle $\bs u\sm u$, $\bs w\cup u$, $\bs w'$, so $H'\notin\sT$ and hence $H'\neq H$.  If $\{u\}=\bs u$, then moving $u$ to $\bs w$ contracts the edge $\bs{uw}$ in $H$ to the vertex $\bs w$ in $H'$ and so $H'\neq H$. Good moves are reversible from the symmetry of $\Nb[u]=\bs u\cup\bs w$.
\end{proof}
In fact, we will also call a move good if $\bs w=\es$. In this case we have $\Nb[u]=\bs u$, and the move corresponds to $H$ having a new leaf vertex $\bs w'=\{u\}$. The reverse move then contracts the full link $\bs w'\bs u$ to $\bs u$. 

Successive moves are even more restricted if we require $H'=H\in\sT$.
\begin{lem}\label{lem:neighbour}
If $H\in\thin G\cap\sT$ and $u_1,u_2\in\bs u$ have good moves to $\bs w_1,\bs w_2$ respectively. Then both can be moved so that $H'=H\in\sT$ if and only if $\bs w_1=\bs w_2$ and $\{u_1,u_2\}\subset\bs u$. 
\end{lem}
\begin{proof}
  Both $\bs w_1,\bs w_2$ are neighbours of $\bs u$. If $\bs w_1\neq\bs w_2$, moving gives $\bs u'=\bs u\sm u_1$ and  $\bs w'_1=\bs w_1\cup u_1$. But moving $u_2$ to $\bs w_2$ is a good move only if $\bs h'\cup \bs w_2=\Nb(u_2)$,
  which is false since $u_1\in\Nb(u_2)$, but $u_1\in\bs h'\cup \bs w_2$ only if $\bs w_1=\bs w_2$.
  
  If $\bs w_1=\bs w_2$, both  can be moved in either order unless $\{u_1,u_2\}=\bs u$, in which case 
  moving both replaces the path $\bs w_1\bs u\bs w_2$ by an edge $\bs{w_1w_2}\notin \cE$, since $H\in\sT$.
  Thus $H'\neq H$. 
\end{proof}
\begin{cor}\label{cor:oneneighbour}
If $H\in\thin G\cap\sT$, a subset $U\subseteq\bs u$ can have good moves to at most one of $\bs u$'s neighbours $\bs w$ so that $H'=H$. \qed 
\end{cor}

While all models of $G$ are connected by moves, they need not be connected by 
good moves. We will call a model \emph{rigid} if it has no good moves. 
Consider $G$ the 4-cycle (\msf{abcd}) with $H$ a single edge $\bs u\bs w$. 
There are two models $\bs u=\{a,b\},\bs w=\{c,d\}$ or $\bs u=\{a,d\},\bs 
w=\{b,c\}$, but both are rigid. Thus neither is connected to the model 
$(G,id)$ by good moves.

\emph{Contracting} a full link $\bs{uw}$ in $H$ corresponds to regarding it as a 
node $\bs{u\cup w}$ in $H'$, that is, contracting the edge of $H$. 
This may also be viewed as moving all vertices from $\bs u$ into $\bs 
w$, or vice versa. Note that these are not necessarily good moves. We used 
this operation in section~\ref{sec:loose} to eliminate loose triangles and 
4-cycles. But, if $H\in\sT$, and we require $H'\in\sT$, we cannot contract an 
edge of a stiff 4-cycle, or we obtain a stiff triangle. 

We will call the reverse operation \emph{splitting} the thick vertex $\bs u$. 
We divide $\bs u$ into disjoint subsets $\bs u_1,\bs u_2$ so that 
$\bs{u_1u_2}$ becomes a full link in $H'$. If $H\in\sT$ and we require 
$H'\in\sT$, we must ensure that $\Nb(\bs u_1)$ and $\Nb(\bs u_2)$ are 
disjoint. Also, we cannot usefully split $\bs u_1$ or $\bs u_2$, or we 
introduce a loose triangle. 

We have seen that cannot hope to recognise the whole class \thksf\sT, since $\sB\subset\sT$ and we know from Lemma~\ref{lem:thickNP} that recognition of \thksf\sB is NP-complete. So our approach to recognition of graphs in $\thksf\sT$ will be based on another property of triangle-free graphs, that the induced neighbourhood every vertex is a star. This corresponds to a \emph{unipolar} subgraph of $G$. Recall the definition from section~\ref{sec:previous}. We use this to try to construct $H$ one vertex at a time and then, if necessary, move vertices as described above to ensure consistency.

So we require recognition of unipolar graphs, which is in polynomial time. See~\cite{EW,McYo,Tysh} and section~\ref{sec:unipolar} below. But the following underlies our approach. 
\begin{lem}\label{lem:modelunipol}
  The model $(H,\psi)$ of a unipolar graph $G$ is 
  \begin{enumerate}[itemsep=0pt,topsep=0pt,label=(\alph*)]
    \item unique up to good moves between the hub and one of its satellites;\label{normalcase}
    \item rigid, but with a possible choice between two models.\label{othercase}
  \end{enumerate}
\end{lem}
\begin{proof}
  We assume a hub $\bs h$ with satellites $\bs b_1,\bs b_2,\ldots\bs b_s$, and model $(H,\psi)$.
  If $\bs h$ is given, then $G\sm\bs h$ is a cluster graph, so $\bs b_1,\bs b_2,\ldots\bs b_s$ are uniquely identified. If $u\in\bs b_i$, we cannot move to $\bs b_j$ for $i\neq j$ by Lemma~\ref{lem:movable}, since $\bs b_j\cap\Nb(u)=\es$. We can move $u\in\bs b_i$ to $\bs h$ by Lemma~\ref{lem:movable} if $\bs b_i\cup\bs h\subseteq\Nb(u)$.  But clearly $\Nb(u)\subseteq\bs b_i\cup\bs h$, so we must have $\bs b_i\cup\bs h=Nb(u)$. Thus this is a good move by Lemma~\ref{lem:goodmove}. We cannot move vertices into more than one satellite by Cor.~\ref{cor:oneneighbour}. This establishes the ``normal'' case~\ref{normalcase},  which is sufficient if $G$ is chordal. See~\cite{EW}.
  
However, there is another case, where $\bs h$ has a subclique $\msf a$ such 
that $G\sm\msf a$ has a thick $P_3$ component \{\msf{b,c,d}\}. See 
Fig.~\ref{fig:othercase}. Then $(\msf{abcd})$ is a thick 4-cycle. This is a 
thick edge if either of the pairs of links \msf{ab,cd} or \msf{ac,bd}  
are both full. If as shown in Fig.~\ref{fig:othercase}, $\msf{ac,bd}$ are 
full links, then \msf{ac} is the hub, \msf{bd} is a satellite and $G$ is 
unipolar. If all of \msf{ab,cd,ac,bd} are full, and $(\msf{abcd})$ is the only
thick 4-cycle, then there is a choice of whether \msf{ab} or \msf{ac} becomes the hub. 
  
  However, there can be more than one such 4-cycle. Each must contribute a full link to the hub so as to be contained in a thick edge. Thus, if there is more than one, the hub must be their unique common full link for $G$ to be unipolar. An example is shown in Fig.~\ref{fig:othercase}, where  \msf{ac} is the hub and \msf{bd}, \msf{ef} are the satellites.
  
  Note also that the model is rigid. There are no good moves from any satellite because no vertex in any satellite is complete to the hub (\msf{ac} in Fig.~\ref{fig:othercase}). Since good moves are reversible, neither can  there be any from the hub into the satellites.
\end{proof}
  
\begin{figure}[htb]
  \centering
  \begin{tikzpicture}[xscale=0.8,yscale=1,font={\sffamily\scriptsize}]
    \node[v] (e) at (2,3) {e};
    \node[v] (f) at (3.5,3) {f};
    \node[v] (a) at (4,5) {a};
    \node[v] (b) at (6,4.5) {b};
    \node[v] (c) at (5,3.5) {c};
    \node[v] (d) at (7,3) {d};
    \draw[thin] (b)--(a)--(f) (c)--(d) ;
    \draw[ ultra thick] (e)--(a)--(c) (b)--(d);
  \end{tikzpicture}
  \hspace*{2cm}
  \begin{tikzpicture}[xscale=0.8,yscale=1,font={\sffamily\scriptsize}]
    \node[v] (e) at (3,4) {e};
    \node[v] (f) at (4,3) {f};
    \node[v] (a) at (4,5) {a};
    \node[v] (b) at (6,4.5) {b};
    \node[v] (c) at (5,3.5) {c};
    \node[v] (d) at (7,3) {d};
    \draw[thin] (b)--(a)--(e) (f)--(c)--(d) ;
    \draw[ultra thick](a)--(c) (b)--(d) (e)--(f);
  \end{tikzpicture}\caption{Case~\ref{othercase} of Lemma~\ref{lem:modelunipol}}\label{fig:othercase}
\end{figure}   

\section{Thick forests}\label{sec:thickforests}
A thick tree $G$ with 26 vertices and 49 edges is shown in Fig.~\ref{fig:chordalthicktree}. If we take the nodes to be the horizontal $K_2$'s, the thin tree has 4 internal vertices, 9 leaves and 12 edges.
In fact, it is easy to see that $G$ is a chordal graph, so it is a chordal thick tree.

\begin{figure}[hbt]
{\tikzstyle{every node}=[circle, draw, thick, minimum size=2mm, inner sep=0pt]
\renewcommand{\strut}{\raisebox{-0.5ex}{\rule{0pt}{2.0ex}}}
\newcounter{ctr}
\newcommand{\abc}[1]{\setcounter{ctr}{#1}\alph{ctr}}
\begin{center}
  \begin{tikzpicture}[xscale=0.18, yscale=1.5, font=\sffamily]
    \foreach \n in {1,2,...,18} {
      \ifodd\n \pgfmathtruncatemacro{\x}{4*\n-3}
      \else    \pgfmathtruncatemacro{\x}{4*\n-4}
      \fi
      \node[label=below:{\strut\abc{\n}\strut}] (\n) at (\x,0) {};
    }
    \foreach \n in {19,20,...,24} {
      \ifodd\n \pgfmathtruncatemacro{\x}{12*\n-219}
               \node[label=left:{\strut\abc{\n}\strut}] (\n) at (\x,1) {};
      \else    \pgfmathtruncatemacro{\x}{12*\n-228}
               \node[label=right:{\strut\abc{\n}\strut}] (\n) at (\x,1) {};
      \fi

    }
    \node[label=above:{\strut{y}\strut}] (25) at (33,2.5) {};
    \node[label=above:{\strut{z}\strut}] (26) at (36,2.5) {};
    \foreach \u/\v in { 1/2,   3/4,   5/6}  \draw (\v)--(\u)--(19)--(\v)--(20);
    \foreach \u/\v in { 7/8,   9/10, 11/12} \draw (\v)--(\u)--(21)--(\v)--(22);
    \foreach \u/\v in {13/14, 15/16, 17/18} \draw (\v)--(\u)--(23)--(\v)--(24);
    \foreach \u/\v in {19/20, 21/22, 23/24} \draw (\v)--(\u)--(25)--(\v)--(26);
    \draw (25)--(26);
  \end{tikzpicture}
\end{center}}
\caption{A chordal thick tree}
  \label{fig:chordalthicktree}
\end{figure}
We have shown in section~\ref{sec:subcol} that even recognising a thick bipartite graph is hard in general. By contrast, we will show that the class of thick forests $\thksf\sF\subset\thksf\sB$ is tractable for most purposes. However, some of its properties of \thksf\sF are inherited from \thksf\sB, so we first consider this class.
 
\subsection{Thick bipartite graphs}\label{sec:thickbip}
We have the following characterisation. 
\begin{lem}\label{lem:thickB}
 $G\in\thksf\sB$ if and only if it has a vertex 2-colouring such that each colour class has no induced $P_3$. 
\end{lem}
\begin{proof}
  $P_3$ is the only forbidden subgraph for a cluster graph, and $G\in\thksf\sB$ if and only if its thin graph is 2-colourable.
\end{proof}
From this we have the following.
\begin{lem}\label{oddantiholes}
  A thick bipartite graph has no odd antihole of size greater than five.
\end{lem}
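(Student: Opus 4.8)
The plan is to show that an odd antihole $\bar{C_k}$ with $k \geq 7$ cannot arise as an induced subgraph of any $G \in \thksf{\sB}$, by examining how the vertices of such an antihole can be distributed among the thick vertices of a model $(H, \psi)$ with $H$ bipartite. The key structural fact I would exploit is that a thick vertex is a clique, so any set of pairwise non-adjacent vertices of $G$ must map to distinct thick vertices of $H$; and a thick edge is cobipartite, so the preimage of any two thick vertices induces a cobipartite graph in $G$.

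**First** I would recall the structure of $\bar{C_k}$ for $k$ odd, $k \geq 7$: label its vertices $u_1, \dots, u_k$ cyclically so that $u_i u_j$ is an edge of $\bar{C_k}$ exactly when $i, j$ are \emph{not} consecutive mod $k$. So each $u_i$ is non-adjacent to precisely its two cyclic neighbours $u_{i-1}, u_{i+1}$, and adjacent to all other $k-3 \geq 4$ vertices. Now suppose $G[\{u_1, \dots, u_k\}] \cong \bar{C_k}$ and $(H, \psi)$ is a model with $H$ bipartite. Consider $\psi$ restricted to the $u_i$. Since $u_i$ and $u_{i+1}$ are non-adjacent they lie in distinct thick vertices. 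The hard question is whether two \emph{adjacent} $u_i, u_j$ (so $i,j$ non-consecutive) can share a thick vertex. If $u_i$ and $u_j$ lie in the same thick vertex $\bs v$, then every other $u_\ell$ adjacent to $u_i$ but not to $u_j$ (or vice versa) forces a contradiction with $\psi^{-1}(\bs v)$ being a clique together with edge/non-edge constraints — concretely, $u_{i-1}$ is non-adjacent to $u_i$ but (since $k \geq 7$, $i-1$ and $j$ are typically non-consecutive) adjacent to $u_j$, which is impossible since adjacency to one vertex of a clique-preimage is not forced but non-adjacency to $u_i \in \bs v$ just means $u_{i-1} \notin \bs v$; I need to be more careful here. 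The cleaner route: if $u_i, u_j \in \bs v$ then for any $u_\ell \notin \bs v$, whether $u_\ell u_i \in E$ and whether $u_\ell u_j \in E$ are independent, but $u_\ell$ lies in some thick neighbour $\bs w$ (or non-neighbour), and $\psi^{-1}(\bs w) : \psi^{-1}(\bs v)$ is cobipartite — this doesn't immediately forbid mixed adjacency. So the real work is a careful case analysis.

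**The main idea that makes it work** is to look at the bipartition of $H$. Let $H$ have parts $X$ and $Y$. Partition the antihole vertices as $S_X = \psi^{-1}(X) \cap \{u_i\}$ and $S_Y = \psi^{-1}(Y) \cap \{u_i\}$. Since $H$ is bipartite, there is no edge of $H$ inside $X$ or inside $Y$, so within $S_X$ (resp. $S_Y$) the only edges of $G$ are those inside a single thick vertex — i.e. $G[S_X]$ is a disjoint union of cliques, each clique inside one thick vertex, and likewise $G[S_Y]$. But $G[S_X]$ is an induced subgraph of $\bar{C_k}$. Now an induced subgraph of $\bar{C_k}$ which is a cluster graph is severely restricted: $\bar{C_k}$ restricted to any vertex set is a cluster graph iff that set contains no induced $P_3$ of $\bar{C_k}$, equivalently no induced $\bar{P_3} = K_2 + K_1$ ... — I would compute exactly which subsets of $\bar{C_k}$ induce cluster graphs, and show each such subset has size at most $4$ (in fact the complement $C_k$ restricted to the same set must be a disjoint union of complete multipartite pieces — an induced subgraph of $C_k$, $k\ge7$, is a union of paths, and its complement is a cluster graph only when each path has at most ... vertices). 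Since $|S_X| + |S_Y| = k \geq 7$, one of them has at least $4$ vertices; pushing the bound down to show $|S_X|, |S_Y| \leq 3$ forces $k \leq 6$, a contradiction. The precise bound on cluster-graph-inducing subsets of $\bar{C_k}$ is the computational heart.

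**The step I expect to be the main obstacle** is pinning down exactly which vertex subsets of $\bar{C_k}$ induce a cluster graph (equivalently, a $P_3$-free graph) and verifying the size bound is small enough — this is where one must use $k$ odd or at least handle the parity, since for even $k$ thick bipartite graphs \emph{can} contain even antiholes (indeed a thick $C_4$ need not be perfect, per Figure~\ref{fig:cycles}), so the argument cannot be purely about cluster-graph subsets without using that $\bar{C_k}$ for odd $k \geq 7$ is not bipartite-coverable in two cliques-of-cliques. Concretely I would argue: a subset $S$ of $\bar{C_k}$ induces a cluster graph iff $C_k[S]$ (a union of paths) has the property that its complement has no $P_3$; $\overline{P_m}$ contains $P_3$ for all $m \geq 4$, and $\overline{P_2 \uplus P_1}$, $\overline{P_1 \uplus P_1 \uplus P_1}$ etc. must be checked — the upshot being $G[S_X]$ can have at most $3$ vertices, so $k = |S_X| + |S_Y| \leq 6$, contradicting $k \geq 7$ odd. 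Once that counting lemma is in hand the proof is a couple of lines, so I would state and prove it as a small internal claim before concluding.
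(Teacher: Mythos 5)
Your overall strategy — look at the bipartition $X,Y$ of the thin graph, partition the antihole vertices into $S_X,S_Y$, observe that each part must induce a cluster graph (a $P_3$-free graph), and then bound the size of each part — is exactly the paper's strategy, just phrased as a direct 2-subcolouring argument. The issue is with the claimed counting lemma. You assert that a vertex subset $S$ of $\bar{C}_k$ inducing a cluster graph can have at most 3 vertices, and this is false: any independent set of $C_k$ is a \emph{clique} of $\bar{C}_k$, hence certainly a cluster graph, and independent sets of $C_k$ can have up to $\lfloor k/2\rfloor$ vertices. For $k=9$, say, the set $\{v_0,v_2,v_4,v_6\}$ induces $K_4$ in $\bar{C}_9$. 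So the argument ``$|S_X|\le 3$ and $|S_Y|\le 3$, hence $k\le 6$'' does not stand as stated. You notice yourself that ``the argument cannot be purely about cluster-graph subsets without using [the parity of $k$],'' but the concrete upshot you then write down is exactly such a purely counting argument, so the proposal is internally inconsistent.

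The fix is to do the enumeration correctly and use parity in the right place. One checks that $\bar{C}_k[S]$ is a cluster graph precisely when $C_k[S]$ is either (i) a single $C_k$-arc of at most 3 vertices, or (ii) a $C_k$-independent set (all components of $C_k[S]$ are singletons). If both $S_X$ and $S_Y$ fall under (ii), then $\{S_X,S_Y\}$ is a proper 2-colouring of $C_k$, forcing $k$ even. Otherwise, one class, say $S_X$, is a single arc of at most 3 vertices; then $S_Y$ is the complementary arc, a single path on $k-|S_X|\ge k-3\ge 4$ vertices, which fails (i) and (ii) — contradiction. The paper's own proof avoids this case enumeration: since $k$ is odd, some two $C_k$-consecutive vertices $v_1,v_2$ get the same colour; they are nonadjacent in $\bar{C}_k$ with common neighbours $v_4,\dots,v_{k-1}$, so to avoid a monochromatic $P_3$ those $k-4$ vertices are all the other colour, and at most one of $v_0,v_3$ can share $v_1$'s colour, giving at most 3 per class directly. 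Both routes work, but yours needs the parity dichotomy made explicit before the final size bound is applied; as written, the size bound is simply wrong.
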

\begin{proof}
  Consider a 2-colouring with colours red and blue of an antihole $\bar{C}_k$ where $k\geq 7$ and odd. Suppose the vertices of $C_k$ are $v_0,v_1,\ldots, v_{k-1}$ cyclically, with subscripts $\bmod~k$. Since $k$ is odd, there must be two adjacent vertices in $C_k$, $v_1,v_2$ say, which have the same colour, say red. These are nonadjacent in $\bar{C}_k$, but have all of $v_4,v_5,\ldots,v_{k-1}$ as common neighbours. These $(k-4)$ vertices must therefore all be coloured blue, or we would have a red $P_3$. Now $v_0$ and $v_3$ cannot both be red, or there would be a red $P_3$: $v_0,v_3,v_1$. So there can be at most three red vertices and, by symmetry, at most three blue vertices. This is a contradiction, since~$k\geq7$.
\end{proof}
From this we have
\begin{lem}\label{lem:bipperfect}
  A thick bipartite graph $G$ is perfect \ifff it has no odd holes.
\end{lem}
\begin{proof}
From the SPGT, we need only show that $G$ contains no odd antihole. But $\bar{C}_5=C_5$ excludes 5-antiholes, and Lemma~\ref{oddantiholes} excludes larger odd antiholes.
\end{proof}
Let $\sP_0$ denote the class of graphs without long holes or odd antiholes, as defined in~\cite{SalJ}. Clearly $\sP_0\subset \sP$, from the SPGT. So  $\sP_0$ is the class of perfect graphs without long holes. Thus we have
\begin{cor}[Salamon and Jeavons~\cite{SalJ}]\label{cor:forestperfect}
  $\thksf\sF\subseteq\sP_0$.
\end{cor}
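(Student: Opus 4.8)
The plan is to reduce everything to a single structural fact: \emph{a thick forest contains no induced cycle of length at least five}. Granting this, since $\sF\subseteq\sB$ every thick forest is a thick bipartite graph, so Lemma~\ref{lem:bipperfect} makes it perfect; and as $\sP_0$ is exactly the class of perfect graphs with no long hole, the thick forest lies in $\sP_0$. Thus it suffices to prove the displayed fact.

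Let $(H,\psi)$ be a model of a thick forest $G$ with $H$ a forest, and suppose for contradiction that $C=v_0v_1\cdots v_{k-1}v_0$ is an induced cycle of $G$ with $k\geq5$. I would partition $V(C)$ cyclically into maximal \emph{runs} $R_0,\ldots,R_{t-1}$ on which $\psi$ is constant, with values $\bs u_0,\ldots,\bs u_{t-1}$; consecutive values differ, and since the edge of $C$ joining $R_j$ to $R_{j+1}$ connects two different thick vertices, model condition~(\ref{m:edg}) forces $\bs u_j\bs u_{j+1}$ to be an edge of $H$. Two observations control the structure. First, $\psi$ cannot be constant on $V(C)$: otherwise $V(C)$ lies in one clique and $v_0v_2\in E(G)$, contradicting that $C$ is induced; hence $t\geq2$. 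Second, every run has at most two vertices, because the first and last vertex of a run of size at least three lie in a common clique, hence are adjacent in $G$, yet they lie at distance at least two along $C$ in both directions (using $t\geq2$), again contradicting that $C$ is induced. Combining these observations with $k\geq5$ gives $k\leq2t$, so $t\geq3$.

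Now comes the key step. The cyclic sequence $\bs u_0\bs u_1\cdots\bs u_{t-1}\bs u_0$ is a closed walk of length $t\geq1$ in the forest $H$ whose consecutive entries are distinct, so it must backtrack: there is an index $j$ (modulo $t$) with $\bs u_{j-1}=\bs u_{j+1}$. Indeed, the edges traversed by the walk form a nonempty subgraph of $H$, which, being a forest, has a vertex of degree one; the walk both enters and leaves that vertex along its unique incident edge, and this is exactly a backtrack. Since $t\geq3$, the runs $R_{j-1},R_j,R_{j+1}$ are pairwise distinct. Let $a$ be the vertex of $R_{j-1}$ adjacent along $C$ to $R_j$, and $c$ the vertex of $R_{j+1}$ adjacent along $C$ to $R_j$; then $a\neq c$, and $a,c\in\psi^{-1}(\bs u_{j-1})=\psi^{-1}(\bs u_{j+1})$, a clique, so $ac\in E(G)$. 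But along $C$ the vertices $a$ and $c$ are separated only by the run $R_j$, which has one or two vertices, so their distance along $C$ is $2$ or $3$ in one direction and at least $2$ in the other --- this is where $k\geq5$ is used; hence $ac$ is not an edge of $C$, and since $C$ is induced, $ac\notin E(G)$, a contradiction. Therefore $G$ has no long hole, and the corollary follows.

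The main obstacle I anticipate is the backtracking step together with its bookkeeping: one must check that the projected closed walk genuinely traverses edges of $H$ (so that acyclicity of $H$ can be invoked), eliminate the degenerate cases $t\leq2$ --- which is precisely where the hypothesis $k\geq5$ enters, through the run-size bound --- so that $R_{j-1},R_j,R_{j+1}$ are truly distinct, and then translate the backtrack in $H$ back into a forbidden chord of $C$ using the clique structure of the thick vertices. The distance estimates along $C$ are then routine.
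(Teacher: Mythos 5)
Your proof is correct, and you rightly observe that Lemma~\ref{lem:bipperfect} alone does not yield the corollary: one still needs that a thick forest contains no long hole. The paper does not in fact supply a proof of this at the point where the corollary is stated (it is cited to Salamon and Jeavons); instead, the stronger statement $\sQ\subset\sP_0$ is proved later in Lemma~\ref{lem:holesantiholes} by a quite different route, namely clique cutset decomposition: by Lemmas~\ref{lem:prop10} and~\ref{lem:prop20} any long hole or odd antihole must sit inside a single atom of the CCD, and atoms of quasi thick forests are cliques or cobipartite graphs, which carry neither. Your argument proves the missing no-long-hole fact directly from the model $(H,\psi)$: project a hypothetical long hole onto a closed walk in the forest $H$, bound each monochromatic run by two (so the walk genuinely moves and $t\geq3$), extract a backtrack at a leaf of the traversed subforest, and lift that backtrack to a forbidden chord. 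The details check out, including the use of $k\geq5$ to rule out the chord being a cycle edge. This is a more elementary, CCD-free route in the spirit of the paper's passage through Lemma~\ref{lem:bipperfect}; the CCD proof is cleaner once that machinery is available and delivers the strictly stronger $\sQ\subset\sP_0$, whereas yours buys self-containment before section~\ref{sec:CCD} at the price of some cyclic bookkeeping.
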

\begin{proof}
If $G\in\sF$, a chordless cycle must be contained in a thick edge. Thus $G$ can only have triangles and 4-holes.
The result then follows from Lemma~\ref{lem:bipperfect}.
\end{proof}
For thick graph classes, this has a converse.
\begin{lem}\label{lem:onlytrees}
Let $\sC$ be a class of graphs. Then \thksf\sC is a class of perfect graphs if
and only if $\sC\subseteq\sF$.
\end{lem}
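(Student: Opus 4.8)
The statement is an equivalence, so the plan is to establish the two implications separately; only the ``only if'' part requires work.

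\emph{The ``if'' direction.} If $\sC\subseteq\sF$, then $\thksf\sC=\bigcup_{H\in\sC}\thksf H\subseteq\bigcup_{H\in\sF}\thksf H=\thksf\sF\subseteq\sP_0\subseteq\sP$ by Corollary~\ref{cor:forestperfect}, so every graph in $\thksf\sC$ is perfect. This is a one-line argument.

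\emph{The ``only if'' direction.} I would prove the contrapositive: assuming $\sC\not\subseteq\sF$, I produce a non-perfect graph in $\thksf\sC$. Fix $H\in\sC$ that is not a forest; then $H$ contains a cycle, and I take $u_0u_1\cdots u_{k-1}u_0$ to be a \emph{shortest} one, so it is chordless and $k\ge3$. The key sub-claim is that $C_m\in\thksf{C_k}$ whenever $k\le m\le 2k$: partition the $m$ vertices of $C_m$ into $k$ consecutive groups, each of size $1$ or $2$; then each group is a clique of $C_m$ and, since $k\ge3$, the quotient is $C_k$. Now pick $m$ to be an odd integer with $\max(5,k)\le m\le 2k$, which exists for every $k\ge3$ (take $m=5$ when $k\le4$, and $m\in\{k,k+1\}$ otherwise), and let $(C_k,\varphi)$ be the resulting model of $C_m$. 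Next I build $G$ from $H$: on the thick vertices $u_0,\dots,u_{k-1}$ I install this thick copy of $C_m$, using $\varphi^{-1}(u_i)$ as the clique on $u_i$; I keep every vertex of $V(H)\setminus\{u_0,\dots,u_{k-1}\}$ as a singleton clique; I keep all edges of $H$ among those remaining vertices; and for each edge $u_iv$ of $H$ with $v\notin\{u_0,\dots,u_{k-1}\}$ I add the single edge from $v$ to one vertex of $\varphi^{-1}(u_i)$. Because the chosen cycle is chordless, $H$ has no edge among $u_0,\dots,u_{k-1}$ other than the cycle edges, so the only edges of $G$ within $V(C_m)$ are those of $C_m$, i.e.\ the copy of $C_m$ is an \emph{induced} subgraph of $G$; and the resulting map $\psi:V(G)\to V(H)$ satisfies conditions~\ref{m:ver} and~\ref{m:edg}, so $G\in\thksf H\subseteq\thksf\sC$. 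Since $m\ge5$ is odd, $C_m$ is an odd hole and hence not perfect, and perfection is hereditary, so $G$ is not perfect. Thus $\thksf\sC$ is not a class of perfect graphs, as the contrapositive requires.

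I do not anticipate a serious obstacle: the lemma is essentially a repackaging of Corollary~\ref{cor:forestperfect} together with the observation that a single cycle already forces a thick graph to contain an odd hole. The only step needing care is the bookkeeping in the reverse direction --- checking that $\psi$ is a genuine model and that the copy of $C_m$ appears in $G$ as an \emph{induced}, not merely spanning, subgraph, which is exactly where chordlessness of a shortest cycle enters. The case $k=4$, $m=5$ of the sub-claim is the thick $C_4$ drawn in Figure~\ref{fig:cycles}.
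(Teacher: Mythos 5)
Your proof is correct and takes essentially the same approach as the paper: both directions are the same, and the ``only if'' direction works by locating a chordless cycle in some $H\in\sC\sm\sF$ and stretching a few of its vertices into $K_2$'s to manufacture an odd hole in a graph of $\thksf H$. The only difference is stylistic --- the paper splits into three short cases (odd hole: done; even hole: stretch one vertex; triangle: stretch two), while you unify them with the observation $C_m\in\thksf{C_k}$ for $k\le m\le 2k$ and then choose an odd $m\ge5$ --- but the underlying construction and the verification that the stretched cycle is induced are the same.
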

\begin{proof}
If $\sC\subseteq\sF$, this is Corollary~\ref{cor:forestperfect}. Conversely, suppose there exists $H\in\sC\sm\sF$. Then $H$ has a triangle or hole, $C$. If $C$ is an odd hole, $H\notin\sP$ by the SPGT, so $\thksf H\nsubseteq\sP$. If $C$ is an even hole, then we replace any vertex $v\in C$ with
a $K_2$ $vv'$ so that an edge $vw$ of $C$ becomes $vv'w$, to give a graph $H'$ with odd hole $C'$. (See Fig.~\ref{fig:cycles}.) Then $H'\in\thksf H$, but $H'\notin\sP$, so $\thksf H\nsubseteq\sP$.
If $C$ is a triangle $uvw$, we replace it similarly with $K_2$'s $uu'$, $vv'$ so that $uvw$ becomes a 5-hole $uu'vv'w$ in a graph $H'\in\thksf H$ such that $H'\notin\sP$.
\end{proof}
Thus $\sF$ is the largest class $\sC$ for which \thksf\sC contains only perfect graphs.

However, not all perfect graphs without long holes are thick forests. See, for example see Figs.~\ref{fig:unthick} and~\ref{fig:unquasi}. We defer proof of the claims to section~\ref{sec:forbidden}.

\begin{figure}[htb]
  \noindent
  \begin{minipage}[b]{0.5\linewidth}
  \centering
  \begin{tikzpicture}[xscale=0.4,yscale=0.25,rotate=90,font={\sffamily\scriptsize}]
    \node[v] (v1) at (0,0) {f};
    \node[v] (v2) at (2,3) {h};
    \node[v] (v3) at (4,0) {e};
    \node[v] (v4) at (6,3) {g};
    \node[v] (v5) at (8,0) {d};
    \node[v] (v6) at (0,6) {c};
    \node[v] (v7) at (4,6) {b};
    \node[v] (v8) at (8,6) {a};
    \draw (v1)--(v2)--(v3)--(v4)--(v5) (v1)--(v3)--(v5) (v2)--(v4) ;
    \draw (v6)--(v7)--(v8)--(v4)--(v7)--(v2)--(v6);
  \end{tikzpicture}
  \caption{A chordal graph not in \thksf\sF}\label{fig:unthick}
\end{minipage}%
\begin{minipage}[b]{0.5\linewidth}
  \centering
  \begin{tikzpicture}[xscale=0.4,yscale=0.25,font={\sffamily\scriptsize}]
    \node[v] (v0) at (-2,3) {f};
    \node[v] (v1) at (0,0) {d};
    \node[v] (v2) at (2,3) {a};
    \node[v] (v3) at (4,0) {e};
    \node[v] (v5) at (6,3) {g};
    \node[v] (v6) at (0,6) {b};
    \node[v] (v7) at (4,6) {c};
    \draw (v1)--(v2)--(v3)--(v5) (v3)--(v5)--(v7)  ;
    \draw (v7)--(v2)--(v6) (v1)--(v0)--(v6) ;
  \end{tikzpicture}
  \caption{A bipartite graph not in $\thksf\sF$}\label{fig:unquasi}
  \end{minipage}
\end{figure}

The complexity of recognising $G\in\thksf{\sB| \sP_0}$ is thus an interesting question. This is not obviously in P, even for subclasses of $\sP_0$, for example the class $\sC_\triangle$ of chordal graphs. Stacho~\cite{Stacho} gave a polynomial time algorithm for recognition of $G\in\thksf{\sB| \sC_\triangle}$. However, we show below, in Corollary~\ref{cor:chordal}, that $\thksf{\sB| \sC_\triangle}=\thksf{\sF| \sC_\triangle}$. So recognition for $\thksf{\sB| \sC_\triangle}$ is implied by recognition for $\thksf\sF$, which we show in section~\ref{sec:recog} below.

Eschen, Ho\`{a}ng, Petrick and Sritharan~\cite{EHPS} defined a subclass of $\sP_0$ which they called
\emph{biclique separable} graphs. This is the hereditary class of graphs which are either cliques, or have a separator consisting of two disjoint cliques, and are also long hole-free. This class strictly includes \emph{weakly chordal} graphs, but does not include $\thksf\sF$. The graph on the left in Fig.~\ref{fig:incomparable} is an example. This graph is cobipartite, so in $\sF$, is not a clique and has no clique separator. The only nonadjacent cliques are the two pairs of opposite corner vertices. Removal of either of these pairs does not disconnect the graph, so it is not biclique separable. Neither is every biclique separable graph in $\sF$. The graph on the right in Fig.~\ref{fig:incomparable} is an example. It is biclique separable, in fact weakly chordal, but is not a clique, not cobipartite and has no clique separator. So it is not in $\sF$.
\begin{figure}[hbtp]
\centerline{\begin{tikzpicture}[line width=0.5pt]
    \node[v] (l) at (-1,0) {};\node[v] (r) at (1,0) {};
    \node[v] (t) at (0,1) {};\node[v] (c) at (0,0) {};\node[v] (b) at (0,-1) {};
    \draw (t)--(l)--(b)--(r)--(t) (t)--(c)--(b) (l)--(c)--(r);
  \end{tikzpicture}
  \hspace{3cm}
  \begin{tikzpicture}[line width=0.5pt]
    \node[v] (l1) at (-1,0) {};\node[v] (l2) at (-2,0) {};\node[v] (l3) at (-3,0) {};
    \node[v] (r1) at (1,0) {};\node[v] (r2) at (2,0) {};\node[v] (r3) at (3,0) {};
    \node[v] (c1) at (0,1) {};\node[v] (c2) at (0,-1) {};
    \draw (c1)--(l1)--(l2)--(l3)--(c1) (l1)--(c2)--(l3);
    \draw (c1)--(r1)--(r2)--(r3)--(c1) (r1)--(c2)--(r3);
  \end{tikzpicture}}
\caption{Biclique separable is incomparable with $\sF$ }
  \label{fig:incomparable}
\end{figure}

The class $\sP_0$ has not received much attention. Recognition is clearly in P, using the algorithms of~\cite{CLV} and~\cite{CSSS} on $\bar G$. But we can ask algorithmic questions about $\sP_0$ which might be easier to answer than for $\sP$. For example, is there a combinatorial algorithm for finding a maximum independent set in $G\in\sP_0$? More ambitiously, can we count independent sets in polynomial time for $G\in\sP_0$? We show below that this can be done for $G\in\sF$, but is \#P-complete for $G\in\sP$.

\subsection{Forbidden subgraphs}\label{sec:forbidden}
Since they are a hereditary class, thick forests can be characterised by minimal forbidden subgraphs, but there appear to be too many to be useful. Some examples are given in Fig.~\ref{fig:forb-vc}, but we will not prove these, except for the graph of Fig.~\ref{fig:forb-vc}. The others can be proved similarly. There are even infinite families of minimal forbidden subgraphs besides long holes and odd antiholes. One such family is shown in Fig.~\ref{fig:forb-inf}. We will not prove this, but it can be shown not to be a thick tree using the algorithm of section~\ref{sec:recog}, and minimal likewise, using the symmetries. We conjecture that all such infinite families of forbidden subgraphs for $\sF$ are of a similar form, two small graphs connected by a chain of triangles. 

\begin{figure}[ht]
  \[ \vcenter{\hbox{
  \begin{tikzpicture}[scale=0.707]
    \node[v] (a) at (1,3) {};
    \node[v] (b) at (3,1) {};
    \foreach[count=\c] \n in {d,e,f} {
      \node[v] (\n) at (\c,\c) {};
      \draw (a)--(\n)--(b);
    }
  \end{tikzpicture}
  }}\hspace{9mm}\vcenter{\hbox{
  \begin{tikzpicture}
    \foreach \n/\x in {c/0, e/1} \node[v] (\n) at (\x, 0) {};
    \foreach \n/\x in {g/0, f/1} \node[v] (\n) at (\x,-1) {};
    \node[v] (a) at (210:1) {};
    \node[v] (d) at ( 60:1) {};
    \draw (g)--(c)--(d)--(e)--(f)--(g)--(a)--(c)--(e);
  \end{tikzpicture}
  }}\hspace{9mm}\vcenter{\hbox{
\begin{tikzpicture}
    \foreach \n/\x in {b/-1, c/0, e/1} \node[v] (\n) at (\x, 0) {};
    \foreach \n/\x in {a/-1, g/0, f/1} \node[v] (\n) at (\x,-1) {};
    \node[v] (d) at (60:1) {};
    \draw (g)--(c)--(d)--(e)--(f)--(g)--(a)--(b)--(c)--(e);
  \end{tikzpicture}
  }}\hspace{9mm}\vcenter{\hbox{
  \begin{tikzpicture}
    \node[v] (c) at (0,0) {};
    \foreach \n/\a in {a/180, b/120, d/60, e/0, f/300, g/240} {
      \node[v] (\n) at (\a:1) {};
      \draw (c)--(\n);
    }
    \draw (g)--(a)--(b)  (d)--(e)--(f);
  \end{tikzpicture}
  }}\hspace{9mm}\vcenter{\hbox{
\begin{tikzpicture}
    \foreach \n/\a in {a/210, b/150, d/30, e/330, g/270} \node[v] (\n) at (\a:1) {};
    \node[v] (c) at (0,0) {};
    \node[v] (f) at ($(e) + (0,-1)$) {};
    \node[v] (h) at ($(a) + (0,-1)$) {};
    \draw (c)--(d)--(e)--(f)--(g)--(h)--(a)--(b)--(c)--(e)--(g)--(a)--(c)--(g);
  \end{tikzpicture}
  }} \]
  \caption{Some minimal forbidden subgraphs for thick forests}
  \label{fig:forb-vc}
\end{figure}

\begin{figure}[ht]
\centerline{
\begin{tikzpicture}[xscale=0.5,yscale=0.75]
\begin{scope}
    \node[v] (a) at (0,0) {};\node[v] (b) at (1,1) {};\node[v] (c) at (2,0) {};\node[v] (d) at (1,-1) {};
     \node[v] (e) at (4,0) {}; \node[v] (f) at (3,1) {};
     \node[v] (g) at (6,0) {}; \node[v] (h) at (5,1) {};
     \draw (a)--(b)--(c)--(d)--(a)--(c);
     \draw (c)--(e)--(f)--(c);
     \draw (e)--(g)--(h)--(e);
\end{scope}
\node at (8,0){\Large$\ldots\ldots$};
\begin{scope}[xshift=10cm]
    \node[v] (a) at (0,0) {};\node[v] (b) at (1,1) {};\node[v] (c) at (2,0) {};
     \node[v] (e) at (4,0) {}; \node[v] (f) at (3,1) {};
     \node[v] (g) at (6,0) {}; \node[v] (h) at (5,1) {};\node[v] (d) at (5,-1) {};
     \draw (a)--(b)--(c)--(a);
     \draw (c)--(e)--(f)--(c);
     \draw (e)--(g)--(h)--(e) (e)--(d)--(g);
\end{scope}
  \end{tikzpicture}}
 \caption{An infinite family of minimal forbidden subgraphs for thick forests}
  \label{fig:forb-inf}
\end{figure}
  
However, a simpler criterion can be given using forbidden \emph{coloured} subgraphs:
\begin{thm}\label{thm:forestcol}
  A graph $G$ is a thick forest if and only if it has a vertex 2-colouring which excludes
  \begin{enumerate}[itemsep=0pt,topsep=0pt,label=(\alph*)]
    \item a long hole (with any colouring),\label{item:longholes}
    \item a monochromatic $P_3$, \label{item:monochromatic}
    \item a 4-hole with the alternating colouring. (See Fig.~\ref{fig:C4col}.)\label{item:alternating}
  \end{enumerate}
\end{thm}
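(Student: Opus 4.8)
I would prove the two directions separately; the ``only if'' direction is routine, while the ``if'' direction carries the real work.

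\emph{Only if.} Suppose $G$ is a thick forest, with model $(H,\psi)$ where $H$ is a forest. A forest is bipartite, so fix a proper $2$‑colouring of $H$ and colour each $v\in V(G)$ with the colour of $\psi(v)$. Condition~\ref{item:longholes} holds because, by Corollary~\ref{cor:forestperfect}, every thick forest lies in $\sP_0$ and hence has no long hole at all. For~\ref{item:monochromatic}: if $x_0x_1x_2$ were a monochromatic $P_3$ (with $x_0x_2\notin E$), then from $x_0x_1\in E$ and the equal colours of $x_0,x_1$ — so $\psi(x_0),\psi(x_1)$ lie in the same part of $H$ and cannot be adjacent — the definition of a model forces $\psi(x_0)=\psi(x_1)$, and likewise $\psi(x_1)=\psi(x_2)$; then $x_0,x_2$ lie in one thick vertex, so $x_0x_2\in E$, a contradiction. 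For~\ref{item:alternating}: if $x_0x_1x_2x_3$ were a $4$‑hole with $x_0,x_2$ one colour and $x_1,x_3$ the other, then $\psi(x_0)\neq\psi(x_2)$ and $\psi(x_1)\neq\psi(x_3)$ (else the corresponding non‑edge of the hole would be an edge), while each consecutive pair has distinct colours and so is joined by an edge of $\cE(H)$; thus $\psi(x_0),\psi(x_1),\psi(x_2),\psi(x_3)$ are four distinct vertices spanning a $4$‑cycle in $H$, contradicting that $H$ is a forest.

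\emph{If.} Suppose $G$ carries a $2$‑colouring satisfying~\ref{item:longholes}--\ref{item:alternating}. By~\ref{item:monochromatic} each colour class induces a $P_3$‑free graph, i.e.\ a disjoint union of cliques; take these cliques (the components of the two colour classes) as the vertices of a graph $H$, with an edge of $\cE(H)$ between two of them exactly when $G$ has an edge between them. Sending each $v\in V(G)$ to the clique containing it is a model of $G$ over $H$, so it suffices to prove $H$ is a forest. $H$ is bipartite (the clique colours give the bipartition), so if it had a cycle it would have a chordless one, say on pairwise‑disjoint cliques $Y_0,\dots,Y_{m-1}$ with $m\geq 4$ even and colours alternating, and with $G$‑edges only between cyclically consecutive $Y_i$ (distinct cliques of one colour class are non‑adjacent, and non‑consecutive pairs are non‑adjacent in $H$ by chordlessness). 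From this ``thick $m$‑cycle'' I must extract a forbidden configuration.

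This is the key step. In $G[Y_0\cup\dots\cup Y_{m-1}]$, call a closed walk \emph{winding} if, reading off the clique containing each successive vertex, the induced closed walk in the $m$‑cycle $Y_0Y_1\cdots Y_{m-1}$ has nonzero winding number (well defined since $m\geq 4$). A winding walk exists: pick one $G$‑edge between each consecutive pair and, within each $Y_i$, join the at most two endpoints lying there, obtaining a closed walk that steps forward once around the $m$‑cycle $m$ times. Take a winding walk $P$ of minimum length. If $P$ repeated a vertex, or had a chord, split it at that vertex (resp.\ replace the shorter of the two arcs the chord cuts off by a single edge): in either case one gets two strictly shorter closed walks whose winding numbers sum to that of $P$, so one of them is again winding — contradicting minimality. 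Hence $P$ is an induced cycle, and since it is winding it meets every $Y_i$, so $|P|\geq m\geq 4$. If $m\geq 6$, or if $m=4$ and $|P|\geq 5$, then $P$ is a long hole, contradicting~\ref{item:longholes}. If $m=4$ and $|P|=4$, then $P$ uses exactly one vertex of each $Y_i$, so its two non‑edges join equal‑coloured vertices in the two different colours — this is the alternating $4$‑hole of Figure~\ref{fig:C4col}, contradicting~\ref{item:alternating}. In every case $H$ has no cycle, so $G$ is a thick forest.

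The main obstacle is precisely this key step. A thick cycle is far from an ordinary cycle: its thick edges are cobipartite and may themselves contain triangles, and there can be ``shortcut'' edges between consecutive cliques, so a shortest closed walk around it need not be chordless and a ``one vertex per clique'' traversal need not exist. Introducing the winding number and minimising over \emph{winding} closed walks is what lets the minimality argument survive these shortcuts; verifying that splitting a walk at a repeated vertex, or bypassing a chord, cannot destroy the winding is the technical heart of the argument.
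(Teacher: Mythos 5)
Your proof is correct and follows the same overall strategy as the paper's: in the ``only if'' direction, pull back a proper $2$-colouring of the forest $H$; in the ``if'' direction, build $H$ out of the components of the two colour classes and show $H$ has no cycle. Where you diverge is in the \emph{rigour} of the key step. The paper argues informally that if $H$ has a $k$-hole then $G$ has a hole of size between $k$ and $2k$, ``by alternately using an edge of $G$ in a thick edge and an edge of $G$ in a thick vertex, if necessary,'' and then infers a contradiction for $k\ge 6$ (a long hole) and for $k=4$ (a $4$-hole forced to be alternating). That construction, taken literally, produces only a closed walk, and — as you correctly observe — shortcutting it to a chordless cycle could in principle destroy the cyclic structure, yielding a $4$-hole inside a single thick edge (cobipartite-coloured, so unforbidden) rather than one that goes around the thick $4$-cycle. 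Your winding-number device is precisely what licenses the shortcutting: splitting a winding closed walk at a repeated vertex, or across a chord, keeps one of the two pieces winding because the contributions cancel and the winding numbers sum. This forces the minimiser to be a chordless cycle that still visits every $Y_i$, so it has length at least $m$, and for $m=4$ with $|P|=4$ it must use one vertex per clique and hence be alternately coloured. Your argument is strictly more detailed than the paper's here but lands in exactly the same place, and it is a genuine tightening of the published proof rather than a different route. Two small points of clean-up: you should note explicitly that $m$ is even because $H$ is bipartite (so there is no ``$m=5$'' case to consider), and when splitting at a chord you should observe that both resulting closed walks have at least three edges, so both are strictly shorter than $P$; with those remarks in place the minimisation is airtight.
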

\begin{figure}[hbtp]
 \centerline{%
  \begin{tikzpicture}[scale=0.6]
  \begin{scope}
  \node[bl] (1) at (0,0) {};
  \node[rd] (2) at (2,0) {};
  \node[bl] (3) at (2,2) {};
  \node[rd] (4) at (0,2) {};
   \draw (1)--(2)--(3)--(4)--(1);
   \end{scope}
   \begin{scope}[xshift=6cm]
  \node[bl] (1) at (0,0) {};
  \node[bl] (2) at (2,0) {};
  \node[rd] (3) at (2,2) {};
  \node[rd] (4) at (0,2) {};
   \draw (1)--(2)--(3)--(4)--(1);
   \end{scope}
  \end{tikzpicture}}
\caption{Alternating and cobipartite colourings of a 4-hole.}
  \label{fig:C4col}
\end{figure}
\begin{proof}
Suppose $G$ is a thick forest and $H\in\thin G$. Then $G$ is a thick bipartite graph, so must have a 2-colouring satisfying \ref{item:monochromatic} by Lemma~\ref{lem:thickB}. It must satisfy~\ref{item:longholes} by Lemma~\ref{lem:holesantiholes}. Also, by Prop.~\ref{prop10}, any 4-hole must lie in a thick edge,
so it must have the cobipartite colouring (see Fig.~\ref{fig:C4col}), and cannot have the alternating colouring.
Hence \ref{item:alternating} is satisfied.

Conversely, if $G$ has a 2-colouring satisfying~\ref{item:monochromatic}, that is, a 2-subcolouring, it is a thick bipartite graph. Also $H$ cannot have a triangle or odd hole. If $H$ has a $k$-hole, $G$ has hole of size $k\leq\ell\leq2k$, by alternately using an edge of $G$ in a link and an edge of $G$ in a node, if necessary. Thus any hole must have $k\leq 4$, or~\ref{item:longholes} would be contradicted. Thus we have only to consider the case where $G$ has a 4-hole. The only 2-colouring of a 4-hole not ruled out by \ref{item:monochromatic} and \ref{item:alternating} is the cobipartite colouring. But a 4-hole of $G$ with this colouring must lie in a link since its monochromatic edges must lie in two different nodes and its dichromatic edges must lie in the edge set connecting them. Thus $H$ can have no triangles or holes and must be a forest.
\end{proof}
\begin{cor}\label{cor:chordal}
A chordal graph is 2-subcolourable if and only if it is a thick forest.
\end{cor}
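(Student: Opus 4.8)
The plan is to read this off directly from Theorem~\ref{thm:forestcol}, using the fact that a chordal graph contains no hole whatsoever.

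First I would dispose of the forward implication, which does not even use chordality: since $\sF\subseteq\sB=\fC_2$, we have $\thksf\sF\subseteq\thksf{\fC_2}$, so every thick forest is a thick bipartite graph, i.e.\ is $2$-subcolourable (as recorded in Section~\ref{sec:subcol}, $G\in\thksf{\fC_q}$ \ifff $G$ has a $P_3$-free $q$-colouring). For the converse, suppose $G$ is chordal and $2$-subcolourable. By definition of $2$-subcolourability, $G$ has a vertex $2$-colouring in which each colour class induces a cluster graph; equivalently, this colouring has no monochromatic $P_3$, which is exactly condition~\ref{item:monochromatic} of Theorem~\ref{thm:forestcol}.

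Now I would invoke chordality: a chordal graph has no hole at all, so in particular it has no long hole and no $4$-hole. Hence conditions~\ref{item:longholes} and~\ref{item:alternating} of Theorem~\ref{thm:forestcol} are satisfied vacuously by \emph{any} $2$-colouring of $G$, and in particular by the $2$-subcolouring chosen above. Thus $G$ possesses a $2$-colouring satisfying all three conditions~\ref{item:longholes}--\ref{item:alternating}, so Theorem~\ref{thm:forestcol} gives $G\in\thksf\sF$, completing the proof.

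There is essentially no obstacle: all the substance is already contained in Theorem~\ref{thm:forestcol}, and the corollary is simply the observation that the two hole-related conditions collapse to nothing on chordal graphs. The only point to state carefully is that ``$2$-subcolourable'' means precisely ``admits a $P_3$-free $2$-colouring'', which matches condition~\ref{item:monochromatic} verbatim.
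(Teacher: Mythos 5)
Your proof is correct and takes exactly the same approach as the paper: both observe that chordality makes conditions~\ref{item:longholes} and~\ref{item:alternating} of Theorem~\ref{thm:forestcol} vacuous, reducing the equivalence to condition~\ref{item:monochromatic}, which is the definition of $2$-subcolourability. You spell out the forward direction a little more explicitly than the paper, but the substance is identical.
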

\begin{proof}
  If $G$ is chordal	conditions~\ref{item:longholes} and~\ref{item:alternating} of Theorem~\ref{thm:forestcol} are automatically satisfied, since then there are no 4-holes or longer in $G$. Condition~\ref{item:monochromatic}
  is equivalent to a $G$ having a 2-subcolouring.
\end{proof}
Thus the graph class considered in~\cite{Stacho} is precisely the class of chordal thick forests.
In our notation, we have shown that $\thksf{\sB|\sC_\triangle}=\thksf{\sF|\sC_\triangle}$.

Note that, if $G$ is chordal, its links must be a cobipartite chordal graphs. These are cochain graphs, since
the complement is bipartite and has no $\bar{C_4}=2K_2$ so is a chain graph.

This forbidden coloured subgraph characterisations does not imply a polynomial time recognition algorithm. Recognising a thick bipartite graph, that is, a graph satisfying only~\ref{item:monochromatic}, is NP-complete from Lemma~\ref{lem:thickNP}. On the other hand, monochromatic forbidden subgraph characterisations do not imply polynomial time recognition when there are infinitely many forbidden subgraphs, as there are here. See, for example,~\cite{Hoang}.

Theorem~\ref{thm:forestcol} casts the recognition of thick forests as a instance of SAT if the the vertices are variables and the two colours represent \textsf{true} and \textsf{false}. Then the clauses derived from~\ref{item:monochromatic} are of type NAE (not-all-equal) 3SAT, and those derived from~\ref{item:alternating}
are of type NAE (not-all-equal) 2SAT. Thus the problem is an instance of NAE 2/3SAT. In general these problems have NP-complete decision. So this does not lead to a decision algorithm, and our approach in section~\ref{sec:recog} does not use this characterisation of thick forests.

However, we will apply Theorem~\ref{thm:forestcol} to show that the graphs of Figs.~\ref{fig:unthick} and~\ref{fig:unquasi} are not thick trees.

Consider the graph $G$ of Fig.~\ref{fig:unthick}. Suppose first that \msf{g} and \msf{h} have the same colour, \wlg red. Then \msf{a,c,d,f} must all be blue, or one of the \pth3's \msf{hga}, \msf{hgd}, \msf{ghc}, \msf{ghf} would be monocoloured red. But now \msf{b,e} must both be red, or one of the \pth3's \msf{abc}, \msf{def} would be  monocoloured blue. But now we have \pth3's \msf{bge}, \msf{bhe} monocoloured red, so this is a bad colouring.
So \msf{g} and \msf{h} have different colours, \msf{g} red and \msf{h} blue \wlg.
Now either and \msf{a} or \msf{d} must be blue. Otherwise we have a red \pth3 \msf{agd}. Assume, by symmetry, that
\msf{a} is blue. Now \msf{b} must be red or we have a blue \pth3 \msf{abh}. Then \msf{c,e} must be blue or we have a red \pth3 \msf{gbc} or \msf{bge}. But now there is a blue \pth3 \msf{che} so again we have a bad colouring.
Therefore, by Theorem~\ref{thm:forestcol}, $G$ is not a thick tree.

The graph $G$ of Fig.~\ref{fig:unquasi} has two $C_4$'s. Thus \msf{b,d} and \msf{c,e} must receive different colours. Assume, by symmetry, that \msf{b,c} are red and \msf{d,e} are blue. Then we cannot colour \msf{a}
red or we would have a red \pth3 \msf{bac} and we cannot colour \msf{a} blue or we would have a blue \pth3 \msf{dae}. Therefore, by Theorem~\ref{thm:forestcol}, $G$ is not a thick tree.

In fact, both the graphs of Figs.~\ref{fig:unthick} and~\ref{fig:unquasi} are minimal forbidden subgraphs for the class of thick forests, since it is easy to see that deleting any vertex leaves a thick forest. 

\section{Recognising a thick forest}\label{sec:recog}
We have seen in Lemma~\ref{lem:thickNP} that it is NP-complete to recognise the class \thksf\sB of thick bipartite graphs. However, this intractability does not necessarily apply to subclasses of \thksf\sB. Thus we will give an algorithm to recognise a thick forest in polynomial time. If the recognition algorithm succeeds, we will say that $G$ is \emph{accepted}, and we terminate with an explicit $H\in\thin G\cap\sF$. Otherwise, $\thin G\cap\sF=\es$, and we will say that $G$ is \emph{rejected}.
\begin{rem}\label{rem:20}
Note that we  do not guarantee to recover any particular representation of $G$ as a thick tree. In fact, $\thin G$ is usually exponentially large, as discussed in section~\ref{sec:thickgraphs}. Rather, we construct some $H\in\thin G\cap\sF$, if one exists, with failure meaning that $\thin G\cap\sF=\es$. Thus, in what follows, nodes $\bs u, \bs w, \ldots$  should not be thought of as fixed in advance, but rather as vertices of some $H\in\thin G\cap\sF$.
\end{rem}
Stacho~\cite{Stacho} gave a polynomial time algorithm to decide whether a chordal graph is 2-subcolourable. From Corollary~\ref{cor:chordal}, we know that this is the same problem as recognising that a chordal graph as a thick forest. Thus, for example, Stacho's algorithm should verify that the graph of Fig.~\ref{fig:chordalthicktree} is a thick tree. However, the algorithm is restricted to chordal thick trees, and does not appear to explicitly yield a thick tree representation. So we give a different algorithm here, which recognises a general thick forest with the same time complexity.

To decide if $G=(V,E)$ is a thick forest, it suffices to decide whether each component of $G$ is a thick tree,
so we will consider only the case where $G$ is a thick tree. Then our algorithm is based on the simple fact that the structure of a tree is precisely that it can be decomposed into disjoint trees by the removal of any vertex. Thus, if we can identify any node $\bs u$ of $G$, we can cast the recognition problem as recursively recognising the subtrees rooted at its children $\bs w_i$ $(i\in[r]$). We terminate the recursion when the subtree is a clique, a thick leaf of \thin G.

For any node $\bs u$ of a thick forest, its induced neighbourhood $G[\Nb(\bs u)]$ must be a cluster graph,
and hence $G[N[\bs u]]$ is a unipolar graph 
with $\bs u$ as its hub and satellites comprising subcliques $C_i$ of its thick neighbours $\bs w_i$ ($i\in[r]$), where $r=\dg\bs u$ in $H\in\thin G$.

To identify a $\bs u\in\thin G$, use the following. 
Suppose $G\in\thksf\sT$ and we have a clique $C$ such that $C\subseteq \bs u$ for some node $\bs u$. Then $\Nb[C]$ induces a unipolar graph $G_C$ with hub $\bs h\subseteq \bs u$.

We use the algorithm \texttt{UNIPOLAR} in section~\ref{sec:unipolar} to decompose $G_C$. A necessary ingredient is recognition of a link, a cobipartite graph. We show this in section~\ref{sec:cobip}.

While $\bs h=\bs u$ is the ``usual'' case in the unipolar decomposition, we must also deal with cases where $\bs h\subset\bs u$. We describe our method for this in section~\ref{sec:expand}.
 
To initialise the algorithm we choose any $v\in V$, which must be in some $\bs u$, and take $C=\{v\}\subseteq\bs u$. We then apply this recursively to the subtrees rooted at $\bs u$'s neighbours $\bs w_i$ ($i\in[r]$), using the clique $C_i\subseteq\bs w_i$ from the cluster graph $G[\Nb(\bs u)]$ as seed to determine the $\bs w_i$. The recursion terminates when a subtree is simply a node. The totality of nodes that appear in the algorithm are then those of the tree \thin G, and we accept $G$. If any failure occurs in the algorithm, we stop, concluding that $G$ is not a thick tree and reject it.

This essentially completes the description of the algorithm. However, there is a remaining difficulty, due to subgraphs of $G$ which can be viewed as links in more than one way. In the above algorithm, this can result in a false rejection of $G$. We consider how to recognise and resolve this problem in section~\ref{sec:adjust}.

Finally, in section~\ref{sec:time}, we show that the time complexity of the algorithm is $\cO(mn)$.

\subsection{Recognising a cobipartite graph}\label{sec:cobip}

\begin{lem}\label{lem:cobiprecog}
A cobipartite graph $G=(V_1\cup V_2,E)$, with possible pre-assignments of vertices to $V_1$ and $V_2$, is recognisable in $\Theta(m)$ time.
\end{lem}
\begin{proof}
The following simple algorithm recognises a cobipartite graph by recognising its complement as a bipartite graph.

\texttt{COBIPARTITE}
\begin{enumerate}[topsep=0pt,itemsep=0pt,label=(\arabic*)]
\item Form the complementary graph $\bar{G}=(V_1\cup V_2,\bar{E})$ in $\cO(n^2)$ time. 
\item Determine the connected components $B_1,\ldots, B_k$ of $\bar{G}$ in $\cO(n^2)$ time. 
\item Check whether any $B_i$ is not bipartite in $\cO(n^2)$ time. If so, reject $G$. Otherwise, let
$B_i=B_{i1}\cup B_{i2}$ ($i\in[k]$) be the bipartitions. 
\item Determine $V_1$ and $V_2$ by assigning $B_{i1}, B_{i2}$ to $V_1$ and $V_2$ for $i\in[k]$, taking into account pre-assignments. If these conflict, reject $G$. Otherwise put the larger of $B_{i1}, B_{i2}$ in $V_2$ or choose arbitrarily if $|B_{i1}|=|B_{i2}|$.\label{cob:step4}
\end{enumerate} 
Maximising the size of $V_2$ in step~\ref{cob:step4} is a choice we make for use in \texttt{UNIPOLAR} below. In general, ignoring pre-assignments, there are $2^k$ ways of assigning the bipartitions.
 
To complete the proof we need only show that $n^2=\Theta(m)$.
Let $n_1=|V_1|$, $n_2=|V_2|$, so $n=n_1+n_2$ and $|V_1:V_2|\leq n_1n_2$. Then
\[ \tfrac14 n(n-1)\leq \tfrac12n_1(n_1-1)+\tfrac12n_2(n_2-1) \leq m \leq \tfrac12 n_1^2+\tfrac12n_2^2+n_1n_2\leq n_1^2+n_2^2\leq n^2,\]
where we have used the convexity of $x(x-1)$ with $x=n/2$ in the first inequality, and $2n_1n_2\leq n_1^2+n_2^2$ in the fourth. Thus $m=\Theta(n_1^2+n_2^2)=\Theta(n^2)$, and so any algorithm recognising $G$ must be $\Theta(m)=\Theta(n^2)$. 
\end{proof}
Note that we may be able to improve the time bound if there are sufficiently many pre-assignments, but we will not
pursue this here.

\subsection{Recognising a unipolar graph}\label{sec:unipolar}
We first show how we can recognise a unipolar graph, a thick star, with hub $\bs h$ and satellites $\bs b_1,\bs b_2,\ldots\bs b_s$. From Lemma~\ref{lem:modelunipol} we know that a unipolar decomposition is not necessarily unique, so we make the following assumption:\\[3pt]
($*$) As many vertices as possible are moved from the hub into one satellite.\\[3pt]
Let $\bs b_{s+1}\gets\es$.  Then, if $\Nb(v)=\bs h\cup\bs b_i$ for any $i\in[s+1]$, we can move $v$ from $\bs h$ into $\bs b_i$ by Lemma~\ref{lem:goodmove}. Moving all such $v$ from $\bs h$ into $\bs b_i$ will ensure that ($*$) is satisfied at the end of the algorithm. We can only choose one such $i$ by  by Lemma~\ref{cor:neighbour}. If now $\bs b_{s+1}\neq\es$, set $s\gets s+1$, otherwise delete $b_{s+1}$. The purpose of ($*$) is that $\bs h$ is minimal. This is useful in section~\ref{sec:expand} below, since it means that every vertex in $\bs h$ must be in the hub.
 
Unipolarity can be recognised by existing methods~\cite{EW,McYo,Tysh}, but we will now give a simple $\cO(mn)$ time algorithm using the same ideas as for a general thick tree. The approach is closest to that in~\cite{Tysh} in that its uses only vertex neighbourhoods and testing for cobipartiteness, which we can check using the algorithm \texttt{COBIPARTITE} given in section \ref{sec:cobip} above.

The justification is given in the bullet point comments.

\texttt{UNIPOLAR}
\begin{enumerate}[topsep=0pt,itemsep=0pt,label=(\arabic*)]
  \item $i\gets1$, $V_1\gets V$.\label{step1}
  \item $G_i\gets G[V_i]$. Choose $u_i,w_i\in V_i$ such that $u_iw_i$ is a nonedge of $G_i$. If no $u_i,w_i$ exist, $\bs h\gets G_i$ and stop, we have found a unipolar decomposition.\label{step2}\\[2pt]
      $\bullet$ $G_i$ is a clique, so must be $\bs h$. Otherwise, the nonedge $u_iw_i$ is used in \ref{step6}.
  \item Let $U_i\gets\Nb[u_i]\cap V_i$, $\bar{U}_i\gets V\sm U_i$.\label{step3}\\[2pt]
       $\bullet$ Loop initialisation. The node $\bs u_i$ of $G$ containing $u_i$ (either $\bs h$ or $\bs b_i$) satisfies $\bs u_i\subseteq U_i$. If $\bs u_i=\bs b_i$,  $U_i$ is an initial estimate of $\bs b_i$.
  \item Test whether $G[U_i]$ is cobipartite, with $U_i=A_i\cup B_i$, where every vertex $v\in A_i$ has a nonneighbour in $B_i$ or a neighbour in $\bar{U}_i$, and $B'_i=\Nb(B_i)\cap\bar{U}_i$ is a (possibly empty) clique complete to $B_i$. \label{step4} \\[2pt]
      $\bullet$ Otherwise can we move $v$ from $A_i$ to $B_i$, minimising the size of $A_i$. Note that we can have $B_i=\es$ if $G[U_i]$ is a clique. 
  \item If so, test whether $G[\bar{U}_i]$ is a cluster graph. If so, $\bs h\gets A_i$, $\bs b_i\gets B_i\cup B'_i$ and the satellites are given by $\bs b_i$ and $G[\bar{U}_i\sm B']$. Stop, we have a unipolar decomposition.\label{step5}\\[2pt]
       $\bullet$ $U_i$ is a cobipartite separator in $G$. It cannot have vertices in two satellites, or else it is not cobipartite. Also $\bs h\subseteq U_i$ or $G[\bar{U}_i]$ would not be a cluster graph. So $U_i\subseteq\bs h\cup\bs b_i$. Now if $u_i\in\bs h$, $\bs h\subset U_i$, so $U_i=\bs h\cup B_i$, where by ($*$) we must have $A_i=\bs h$ and $B_i\subseteq \bs b_i$. Since $B'_i$ is a clique, $G[\bar{U}_i]$ is a cluster graph if $B'_i\neq\es$. But we can expand $B_i$ to $\bs b_i$ by adding $B'_i$, since it is complete to $B_i$. Now $G[\bar{U}_i\sm B'_i]$ gives the other satellites $\bs b_j$ ($j\in[s\sm i]$). If $u_i\in\bs b_i$, $\bs h\subseteq U_i$ implies that $u_i$ is complete to both $\bs h$ and $\bs b_i$, and we can move $u_i$ into $\bs h$, and follow the reasoning above with $B'_i=\es$.    
  \item Set $u_i\gets w_i$ and reset quantities as in step~\ref{step3}.\label{step6}\\[2pt]
       $\bullet$  $G[U_i]$ is a general unipolar subgraph of $G$, so $u_i\in\bs h$. Any nonedge in $G$ has at least one vertex in a satellite, since $\bs h$ is a clique. So $u_i\in\bs h$ implies $w_i\in\bs b_i$, and $u_i\gets w_i$ implies $G[U_i]$ is a subgraph of $\bs h\cup\bs b_i$.\label{step5}
  \item For each $v\in U_i$, let  $W(v)\gets\Nb(v)\cap \bar{U}_i$ and determine whether $W(v)$ is a clique. If not, we set $U_i\gets U_i\sm v$, removing $v$ from $U_i$.\label{step7}\\[2pt]
      $\bullet$ If $v \in \bs b_i$, $W(v)\subseteq \bs h$, so is a clique. Thus any $v$ which is removed from $U_i$ must be in $\bs h$, not in $\bs b_i$. Any remaining $v\in U_i\cap \bs h$ must be such that $W(v)\cup U_i=\bs h\cup B_i$, where $B_i\subseteq\bs b_i$, since $v\in\bs h$. Note that $\bs h$ here may depend on $v$ unless $\bs h$ is unique.
  \item If $U_i$ is a clique and $\Nb(U_i)$ is a clique, set $\bs b_i\gets U_i$ and $V_{i+1}\gets V_i\sm U_i$. Let $i\gets i+1$ and return to step~\ref{step2}.\label{step8}\\[2pt]
      $\bullet$ This will be true if and only if, $U_i\cap\bs h=\es$, in which case $U_i=\bs b_i$.
  \item Determine a $v\in U_i$, such that $W(v)\gets W(v)\cup U_i$ gives a unipolar decomposition as in steps~\ref{step4} and \ref{step5} above. If so, stop with this unipolar decomposition.\label{step9}\\[2pt]
      $\bullet$ Then $W(v)=\bs h\cup B_i$, where $B_i\subseteq \bs b_i$, so $W(v)$ is a cobipartite separator in $G$ as in steps~\ref{step4} and \ref{step5}.
  \item Stop, $G$ is not a unipolar graph.\label{step10}\\[2pt]
  $\bullet$ Since we have not stopped with a unipolar decomposition, $G$ is not unipolar.
\end{enumerate}

It is not difficult to see that \texttt{UNIPOLAR} has an $\cO(ms)$ time bound,
where $s$ is the number of satellites,
since we consider at most $2s$ vertices $u_i,w_i$. For each such vertex, the most time-consuming operation are the $\cO(m)$ time cobipartite decompositions. Since $s<n$, this is clearly an $O(mn)$ time algorithm. However it appears that the time complexity can be reduced using a more careful implementation and analysis, possibly to $\cO(m)$. See~\cite{McYo} for a different algorithm. For example, if we find $v\in\bs h$ during the identification of a satellite, this information can be used when identifying further satellites. But we will not pursue this here.

As an example, consider the graph in Fig.~\ref{fig:unipolar}. Suppose we choose nonedge $u_1w_1\gets\sf{gh}$, giving $U_1=\Nb[\sf g]\gets\bsf{c,d,e,f,g,i,j}$. Then $G[U_1]$ is not cobipartite, so $\sf g\in\bs h$ and $u_1\gets \sf h$. Then $U_1=\Nb[\sf h]\gets \bsf{f,h,i,j}$. Now $\sf f$ has nonedge neighbours in $\bar{U}_1$, $\sf{c,d}$ for example, so $U_1\gets\bsf{h,i,j}$ which has neighbours only in the clique $\bsf{f,g}$ in $\bar{U}_1$. So $\bs b_1\gets \bsf{h,i,j}$ and $\sf{f,g}\in\bs h$. Now $V_2\gets\bsf{a,b,c,d,e,f,g}$. Suppose now $u_2w_2\gets\sf{ed}$,
so $U_2\gets\bsf{a,b,e,f,g}$. Now $G[U_2]$ is cobipartite, with $A_2\gets \bsf{e,f,g}$ and $B_2\gets \bsf{a,b}$,
and $G[\bar{U}_2]$ is a cluster graph with cliques $\bsf{c,d}$, $\bsf{h,i,j}$. Now $B'_2=\es$, so $\bs h\gets \bsf{e,f,g}$, $\bs b_1\gets\bsf{h,i,j}$, $\bs b_2\gets\bsf{a,b}$ and $\bs b_3\gets\bsf{c,d}$ is a unipolar decomposition.
\begin{figure}[htb]
  \centering
  \begin{tikzpicture}[xscale=1.5,yscale=0.75,line width=0.5pt,font={\sffamily\scriptsize}]
    \node[v] (a) at (0.75,0) {a};
    \node[v] (b) at (0.75,1) {b};
    \node[v] (c) at (0.75,2.5) {c};
    \node[v] (d) at (0.75,3.5) {d};
    \node[v] (e) at (2,1) {e};
    \node[v] (f) at (2,2) {f};
    \node[v] (g) at (2,3) {g};
    \node[v] (h) at (3,1) {h};
    \node[v] (i) at (3,2) {i};
    \node[v] (j) at (3,3) {j};
    \draw (a)--(e)--(f) (g)--(d)--(c)--(f)--(b)--(a) ;
    \draw (h)--(i) (h)--(f) (b)--(e) (c)--(g) ;
    \draw (g)--(j)--(f)--(i)--(g) (f)--(g) (i)--(j) ;
    \draw (g) to[bend right=20] (e) (j) to[bend left=20] (h);
  \end{tikzpicture}
\caption{Unipolar graph $G$}\label{fig:unipolar}
\end{figure}

\subsection{Identifying a node}\label{sec:expand}

A sufficient condition for $\bs u$ to be a node in a thick tree $G$ is that it $\bs u$ is a clique and $G[V\sm\bs u]$ is a thick forest with $s$ trees rooted at $\bs u$'s neighbours $\bs w_i$ $(i\in[s])$. But to find such $\bs u$ we use weaker necessary conditions: that $G[\Nb[\bs u]]$ is a unipolar graph with satellites $C_i\subseteq\bs w_i$, and that the $C_i$ are not connected in $G$.

So, at a general step, we have a clique $C\subseteq \bs u$ for some node $\bs u$ in $G$ and we wish to find $\bs u$. To initialise the process, we may choose any $v\in V$. This must be in some node $\bs u$. Then $C=\{v\}\subseteq\bs u$, as required.

Since $C\subseteq \bs u$, we construct the graph $G_C=G[V_C]$, where $V_C=\Nb[C]$.
Now any clique in $G$ containing $C$ is also a clique in $G_C$ and so $\bs u \subseteq V_C$. 
Thus $G_C$ is a unipolar subgraph of $G_{\bs u}$ with hub $\bs h$ and satellites $\bs b_1, \bs b_2, \ldots, \bs b_s$, where $\bs b_i$ is contained in a thick neighbour $\bs w_i$ of $\bs h$, for $i\in[r]$. 
Also, since $C$ is a hub for $G_C$, and $C\subseteq\bs u$, and $\bs u\subseteq V_C$, $\bs u$ is a hub for $G_C$.

Note that $\bs u$ is a \cs in $G$ if $\bs u$ is not a thick leaf, 
and hence also in $G_C$. Thus any clique in $G_C$ which includes $\bs u$ is a \cs in $G$. However, $G_C$ also has vertices from neighbours $\bs w_i$ $(i\in[s])$ of $\bs u$ in $H=\thin G$, 
where $s\leq r=\dg_H(\bs u)$. Since $\bs w_i$ is a clique, these also induce cliques in $G_C$. Also $\bs u$ may separate $G$ into two graphs, $G_L=G[V_L]$ and $G_R=G[V_R]$, connected by $\bs u$. We will take $V_C\subseteq V_L$ and $V_R=V\sm V_L$. Thus $V_L\neq\es$, but $V_R=\es$ is possible if $C\neq\bs u$.
Now $V_L\sm V_C$ can be discovered by searching in $G$ starting from $\Nb(C)$, and hence $V_R$ can be identified. This preprocessing requires $\cO(m)$ time.

We apply the algorithm \texttt{UNIPOLAR} to $G_C$, giving a hub $\bs h$ such that $C\subseteq\bs h\subseteq\bs u \subseteq V_C$. The only modification is that a vertex $v$ cannot be in a satellite if it  has a neighbour in $V_R$ or is in $C$. This is required for $\thin G$ to be triangle-free and to ensure that $C\subseteq\bs h$. Then, at termination, $\bs h$ is a hub for $G_C$ and the only edges from $V_L$ to $V_R$ are from $\bs h$. Thus, if we check that $G[\Nb(\bs h)]$ is a cluster graph with cliques $C_i$ $(i\in[s])$, we can take $\bs u\gets\bs h$.
We check that all the $C_i$ are cliques in $\cO(m)$ time, and are  disconnected in $G$, which can be done with a breadth-first search starting from the $C_i$ in $\cO(m)$ time. The time needed to identify $\bs u$ is $\cO(mn)$, from the time bound for determining $V_L,V_R$, the time bound for \texttt{UNIPOLAR} and the time bound for checking that $\Nb(\bs u)$ is a cluster graph. This is dominated by \texttt{UNIPOLAR}.

Note that we will only obtain a model in case~\ref{normalcase} of Lemma~\ref{lem:modelunipol} from the way $G_C$ is constructed. Thus, if $G$ is chordal, we claim $\bs u\gets\bs h$, since two satellites in $G_C$ cannot be connected via a vertex not in $V_C$. This case was considered by Stacho~\cite{Stacho}. Otherwise $G$ contains a hole of size at least four, so is not chordal. But the existence of 4-holes in $G$ can mean that the hub $\bs h$ from \texttt{UNIPOLAR} satisfies only $\bs h\subset\bs u$. That is, we are in case~\ref{othercase} of Lemma~\ref{lem:modelunipol}. We must deal with this below, but first consider the chordal graph of Fig.~\ref{fig:chordalthicktree} as an example, taking $C=\bsf{z}$. Thus $V_C=\bsf{t,v,x,y,z}$, $V_R=\es$ and $G_C$ comprises three triangles with common edge \msf{yz}. So \texttt{UNIPOLAR} gives $\bs h=\bsf{y,z}$ and satellites $\bsf{t}$, $\bsf{v}$ and $\bsf{x}$. Then $\Nb(\bs h)$ gives $C_1=\bsf{s,t}$, $C_2=\bsf{u,v}$ and $C_3=\bsf{w,x}$. These are disconnected in $G$, and so we may take $\bs u=\bsf{y,z}$. Continuing this into the three subtrees headed by the $C_i$ $(i\in[3])$ gives the expected horizontal $K_2$'s as a further 12 nodes. 

Now, in the presence of 4-holes, two of the satellites produced by \texttt{UNIPOLAR} may be connected as in Fig.~\ref{fig:othercase} of Lemma~\ref{lem:modelunipol}. Then we cannot claim $\bs u\gets\bs h$. However, since 4-holes can only occur in the links of a a thick tree, two of the vertices of any 4-hole must be in $\bs u$ and two must be in some $\bs w_i$. Thus the paths connecting satellites in $G_C$ must be $P_3$'s having one endpoint in a satellite $\bs b_k\subseteq \bs u\sm\bs h$, and the other in some $\bs b_j$ ($j\neq k$), with a mid vertex not in $V_C$. If the shortest path is not a $P_3$, $G$ has a hole of size 5 or more, so can be rejected.

So we search for these $P_3$'s, using a breadth-first search starting from the $\bs b_i\subseteq V_C$ given by \texttt{UNIPOLAR}, in $\cO(m)$ time. We could now simply add these paths to $G_C$, giving a graph $G'_C$, and repeat \texttt{UNIPOLAR} on $G'_C$, but there is a more efficient alternative.

We sort the endpoints of the $P_3$'s by the $\bs b_i$ in which they appear. If at least three $\bs b_i$'s contain endpoints, this must identify a unique $\bs b_k$ such that $\bs h'= \bs h\cup\bs b_k$ is a node. Otherwise, $G$ is not a thick tree since \thin G contains a 4-cycle which is not in a link. Now we test whether $\Nb(\bs h')$ induces a cluster graph in $G$. If not, $\Nb(\bs h')$ is not unipolar, so $G$ is not a thick tree. If only two $\bs b_i$'s contain endpoints, there are two alternatives for $\bs b_k$, so we simply try both as above. If either succeeds we continue with this $\bs u\gets \bs h'$. If both fail,  again $G$ is not a thick tree.

As an illustration, consider the graph of Fig.~\ref{fig:expand2} with $C=\msf b$. Then $G_C$ is as shown, and \texttt{UNIPOLAR} gives $\bs h=\bsf{b}$ and satellites $\bs b_1=\bsf{a}$, $\bs b_2=\bsf{c}$ and $\bs b_3=\bsf{e}$. These are connected in $G$ by $P_3$'s $\msf{ade}$ and $\msf{cfe}$. Both paths have endpoint $\msf{e}$ in $\bs b_3$ so we have $\bs u\subseteq\bs h\cup\bs b_3=\bsf{b,e}$. Now $\msf{e}$ can be moved into $\bs h$, giving $\bs h\gets\bsf{b,e}$. Now $\Nb(\bs h)=\bsf{a,d}\cup\bsf{c,f}$ induce a cluster graph in $G$.

However, suppose, we had chosen $C=\msf a$. Then $G_C$ is the $P_3$ $\msf{dab}$. Then \texttt{UNIPOLAR} gives $\bs h=\bsf{a}$ and satellites $\bs b_1=\bsf{b}$, $\bs b_2=\bsf{d}$. These are connected in $G$ by a $P_3$, $\msf{bed}$, which has endpoints $\msf{b}\in\bs b_1$ and $\msf{d}\in\bs b_2$. So we have two candidates $\bs u_1\subseteq\bs h\cup\bs b_1=\bsf{a,b}$ and $\bs u_2\subseteq\bs h\cup\bs b_2=\bsf{a,d}$. Now $\Nb(\bs u_1)=\bsf{d,e}\cup\bsf{c}$, which is not a cluster graph in $G$, since it is connected by the path $\msf{cfe}$. Thus $\bs u\neq\bs u_1$, so we try $\bs u_2=\bsf{a,d}$. Then $\Nb(\bs u_2)=\bsf{b,e}\cup\bsf{g}$, which induces a cluster graph in $G$, since its two components are not connected. So we may take $\bs u\gets\bs u_2$ and continue the algorithm.

\begin{figure}[htb]
  \centering
  \begin{tikzpicture}[xscale=0.85,yscale=0.5,font=\sffamily,font={\sffamily\scriptsize}]
  \begin{scope}
    \node[v] (d) at (0,0) {d};
    \node[v] (e) at (2,0) {e};
    \node[v] (f) at (4,0) {f};
    \node[v] (a) at (0,-2) {a};
    \node[v] (b) at (2,-2) {b};
    \node[v] (c) at (4,-2) {c};
    \node[v] (g) at (0,2) {g};
    \node[v] (h) at (4,2) {h};
    \draw (a)--(b)--(c) (d)--(e)--(f);
    \draw (a)--(d)--(g) (e)--(b) (c)--(f)--(h);
    \node[empty] at (1.8,-3.5){\mbox{\normalsize $G$}};
  \end{scope}
  \begin{scope}[xshift=9cm,xscale=0.75]
    \node[v] (a) at (0,-2) {a};
    \node[v] (b) at (2,-2) {b};
    \node[v] (c) at (4,-2) {c};
    \node[v] (e) at (2,0) {e};
    \draw (a)--(b)--(c) (e)--(b);
    \node[empty] at (1.8,-3.5){\normalsize$G_C$};
  \end{scope}
  \end{tikzpicture}\caption{Graph $G$ with $G_C$ for $C=\{\msf{b}\}$}\label{fig:expand2}
\end{figure}

We now take $C_i\subset\bs w_i$ for the components $C_i$ $(i\in[r])$ of the cluster graph $G[\Nb(\bs u)]$ and use the same process to recognise the $r$ subtrees rooted at the $\bs w_i$, terminating when all subtrees are cliques.

\subsection{Fixing a path}\label{sec:adjust}
There is a remaining difficulty with the recognition algorithm. As described 
above, it could result in an incorrect rejection. By ($*$), the algorithm of 
section~\ref{sec:unipolar} moves as many vertices as possible into one of its 
satellites. Thus we may mistakenly locate vertices from a node in one of its 
children and this cannot be recognised at that point in the algorithm. Hence 
we can misidentify thick vertices and edges, which can lead to further 
misidentifications. 

The generic situation is shown in Fig.~\ref{fig:adjust} by the loose triangle $(\msf{abc})$. As in section~\ref{sec:loose}, the heavy lines indicate full links, and the others are arbitrary links. Thus $\msf{ab}$ and $\msf{bc}$ are possible thick vertices,  and $(\msf{abc})$ represents a thick edge, which may be $\msf{ab:c}$ or $\msf{a:bc}$, as discussed in section~\ref{sec:loose}. We may also have loose 4-cycles, like $(\msf{efhg})$ in Fig.~\ref{fig:adjust}, with two full links representing the thick vertices of a thick edge $\msf{ef}:\msf{gh}$. As in section~\ref{sec:loose}, the 4-cycle can have chords, but we assume these links are not full, or the 4-cycle represents two thick edges. There is no need consider longer cycles since thick edges contain only triangles and 4-holes.

\begin{figure}[htb]
  \centering
  \begin{tikzpicture}[xscale=0.8,yscale=1,font={\sffamily\scriptsize}]
    \node[v] (a) at (4,6) {a};
    \node[v] (b) at (3,5) {b};
    \node[v] (c) at (5,5) {c};
    \node[v] (d) at (7,5) {d};
    \node[v] (e) at (6,4) {e};
    \node[v] (f) at (8,4) {f};
    \node[v] (g) at (6,3) {g};
    \node[v] (h) at (8,3) {h};
    \draw[ultra thick] (a)--(b)--(c)--(d)--(e) --(f) (h)--(g) ;
    \draw[thin] (a)--(c)--(e) (d)--(e)--(g) (f)--(h);
  \end{tikzpicture}\caption{Fixing a path}\label{fig:adjust}
\end{figure} 

A loose triangle with exactly one full link can be a thick edge in one way, so there can be no misidentification. If there are two full links, the triangle can be a thick edge in two ways, and if there are three, the triangle is a thick vertex.  

Thus confusion between thick vertices and edges can occur. We can view this as vertices being moved wrongly from a thick vertex into one of its neighbours. This can then propagate into one of the subtrees, and only one, by Cor.~\ref{cor:oneneighbour}. Thus misidentifications can only occur along a path in the thick tree. And, since the thin graph must be a tree, the path must terminate at or before a thick.

This becomes problematic only when one of the nodes of the loose triangle is also in a loose 4-cycle, as with $(\msf{efhg})$ in Fig.~\ref{fig:adjust}. Here we must identify $\msf{de:gh}$ as a thick edge or the graph is not a thick tree. However, this can result in a node being contained in two thick vertices, so we do not have a thick tree. We will call this a \emph{clash}. In Fig.~\ref{fig:adjust}, for example, $\msf{e}$~is in both $\msf{de}$ and $\msf{ef}$, so is a clash.

To resolve this, we must remove the clashing node from one of the thick vertices. in Fig.~\ref{fig:adjust} we cannot remove $\msf e$ from $\msf{ef}$ or we have a 4-cycle which is not in a thick edge, implying rejection of $G$. So we must remove $\msf d$ from $\msf{de}$, if possible. We can do this here since $cd$ is a full link, giving $\msf{cd:e}$ a thick edge, and \msf{de} no longer a thick vertex. We identify \msf{cd} as follows. Let $\Nb^-$ denote neighbourhood upwards in the tree. Since $\Nb^-(\msf e)=\msf c\cup \msf d$, we must move $\msf d$ into the thick vertex $\msf{cd}$. If $\msf{cd}$ is not a full link, then $G$ is not a thick tree, since we cannot resolve the clash. But resolving this clash may now cause a clash with thick vertex $\msf{bc}$. Removing \msf c from \msf{bc} will then give the thick vertex $\msf{ab}$ and so on. The path of nodes and links involved is identified by the algorithm of section~\ref{sec:recog}. This path of clashes cannot go beyond the root, since that is where the first misidentification could  occur. So this is a kind of backtracking but, since there is no branching, it does not significantly worsen the time complexity of the algorithm.  

We accept $G$ if this terminates with no clash, and reject $G$ if there is a clash which cannot be resolved because some upward neighbourhood is not a clique. Encountering a 4-cycle now results in rejection. We cannot move the clashing node upwards in the schematic, since its neighbourhood is a $P_3$. This occurs if both nodes in a thick vertex have links to a third. For example, if there was a node $\msf k$ in Fig.~\ref{fig:adjust} so that $\msf{bck}$ formed a triangle, making $\msf{cd}$ a thick vertex gives a fatal clash at $\msf c$. In fact, such a fatal clash reveals a forbidden subgraph from the class depicted in~Fig.~\ref{fig:forb-inf}. 

This problem can occur on several paths in the thick tree, even if $G$ is accepted. But each such path is bounded by different loose triangles and 4-cycles in the schematic, so these paths must all be disjoint. Thus the total work to perform all fixes is $\cO(m)$.

To illustrate this, consider the simple graph of Fig.~\ref{fig:hiddenV1}. The vertices are 1-cliques, and therefore all links are full, as shown. 

\begin{figure}[htb]
  \centering
  \begin{tikzpicture}[xscale=0.8,yscale=1,font={\sffamily\scriptsize}]
    \node[v] (a) at (4,6) {a};
    \node[v] (b) at (1,5) {b}; 
    \node[v] (c) at (3,5) {c};
    \node[v] (d) at (5,5) {d};
    \node[v] (e) at (7,5) {e};
    \node[v] (f) at (2,4) {f};
    \node[v] (g) at (6,4) {g};
    \node[v] (h) at (8,4) {h};
    \node[v] (i) at (1,3) {i};
    \node[v] (j) at (3,3) {j};
    \node[v] (k) at (6,3) {k};
    \node[v] (l) at (8,3) {l};
    \draw[ultra thick] (a)--(c)--(b) (a)--(d)--(e) (d)--(c) (g)--(h)--(l)--(k)--(g) ;
    \draw[ultra thick] (b)--(f)--(c) (d)--(g)--(e) (f)--(i)--(j)--(f);
  \end{tikzpicture}\caption{Example: Fixing a path}\label{fig:hiddenV1}
\end{figure}
Suppose we start with $C=\bsf{a}$, then \texttt{UNIPOLAR} gives $\bs u=\bsf{a}$, and $C_1=\bs w_1=\msf{cd}$. The node $\msf{cd}$ then has children $\msf{bf}$ and $\msf{eg}$. In the subtree rooted at $\msf{bf}$, we have a leaf $\msf{ij}$, so this subtree is accepted. The node $\msf{eg}$ has two children $\msf{h}$ and $\msf{k}$  but these are connected by the $\textrm{P}_3$ $\msf{klh}$. The 4-cycle $\msf{gklh}$ can be resolved as a thick edge $\msf{gh}:\msf{kl}$ or $\msf{gk}:\msf{hl}$, but now $\msf g$ is a clash with $\msf{eg}$. So we must move $\msf g$. Now $\Nb^-(\msf g)=\msf d \cup \msf e$, which can be a node $\msf{de}$. But now $\msf{d}$ clashes with $\msf{cd}$. So we must move $\msf d$ from $\msf{cd}$ to give a new root $\msf{ac}$, and we are done. So we accept $G$ as a thick tree with nodes, $\msf{ac}$, $\msf{bf}$, $\msf{ij}$, $\msf{de}$, $\msf{gh}$ and $\msf{kl}$. The path of modifications changes the nodes $\msf{a, cd, eg}$ to  $\msf{ac, de, gh}$.

\subsection{Running time analysis}\label{sec:time}
We now determine the time and space complexity of the algorithm. The space required is clearly $\cO(m)$. 
The time to identify a node is $\cO(mn)$. We claim that the recognition algorithm has time complexity $T(m,n)=\cO(mn)$. This is easily seen to be true for the nodes. The recursion then splits $G$ into subtrees, which we will assume have $n_i$ vertices and $m_i$ edges ($i\in[r]$).

Then, since $m_i+n_i<m+n$ for $i\in[r]$, the time complexity is bounded by
\begin{align*}
  T(m,n) &=\, \sum_{i=1}^{r} T(m_i,n_i)+\cO(mn), \ \mbox{to determine a root,}\\
         &=\,\cO\Big( \sum_{i=1}^{r} m_in_i\Big) +\cO(mn), \ \mbox{by induction,}\\
         &=\, \cO\Big(\sum_{i=1}^{r}m_i\sum_{j=1}^{r}n_j+mn\Big)\\
         &=\, \cO (mn), \ \mbox{since }\sum_{i=1}^{r}m_i\leq m,\ \sum_{j=1}^{r}n_j\leq n.
\end{align*}
Thus the overall time complexity of recognising a thick forest is $\cO(mn)$.

\section{Clique cutset decomposition}\label{sec:CCD}
An important property of thick forests is that they have a \emph{\CCD}, given by the internal nodes. 
In general, a \CCD gives a tree whose internal vertices correspond to clique separators and leaves called \emph{atoms}, which are either cliques or subgraphs with no clique separator. See~\cite{BPS,BPV} for definitions and some applications of \CCD, and~\cite{BPV,Tarjan} for efficient algorithms. Using these algorithm, the \CCD of any graph $G$ can be done in $\cO(mn)$ time, the same as our time bound for recognising a thick forest.

It is easy to see that \CCDs have the following property.
\begin{prop}\label{prop:ccd}
  Let $\sC$ be a hereditary graph class, and let $\sC_0$  be its subclass of graphs which are either cliques or have no clique separator. Then, in any \CCD of $G\in\sC$, all atoms must be in $\sC_0$.
\end{prop}
Note that $\sC_0$ is not hereditary in general, since removing a vertex can produce a graph with a clique separator. For example, if $\sC_0$ is the class of long holes, removing any vertex gives a path which has a clique separator.

For a thick tree all atoms are either cliques or cobipartite graphs. However, \CCD does not necessarily recover the thick tree structure, for two reasons. Firstly, a thick tree may have clique separators which are not nodes. Secondly, the cliques in a \CCD do not need to be disjoint, as they do in a thick tree.  

Thus we cannot use \CCD to recognise a thick forest. However, the fact that it decomposes the graph is very useful in algorithmic applications, since it can reduce the problem on $\sC$ to that on $\sC_0$. For example, we will use it in sections~\ref{sec:indsets} and \ref{sec:colouring} to give algorithms counting independent sets and colourings for graphs in $\sF$.

This leads us define a larger graph class $\sQ$ by simply requiring that all atoms of a \CCD are either cliques or cobipartite graphs with no clique separator. We will call $\sQ$ the class of \emph{quasi} thick forests. Clearly $\thksf\sF\subseteq\sQ$. But, from Prop.~\ref{prop:ccd}, we see that if $G\in\sQ$, then any \CCD of $G$ proves membership in the class, which is not true for $\sF$. A chordal thick edge is a cochain graph, which decomposes into two overlapping cliques, which is not permitted for a thick tree. 

A graph in $\sQ$ can be recognised in $\cO(mn)$ time using \CCD and the recognition of cobipartite graphs from section~\ref{sec:cobip}. Quasi thick forests include the class $\sC_\triangle$ of chordal graphs, where the atoms of the \CCD are all cliques. A graph is chordal \ifff it has a \CCD with only cliques as atoms~\cite{Gavril}. 

Clearly not all thick forests are chordal, but neither are all chordal graphs thick forests.  The graph shown in Fig.~\ref{fig:unthick} is an example of a chordal graph which is not a thick tree, as we proved in section~\ref{sec:forbidden}. Thus $\sF\cup\sC_\triangle\subseteq\sQ$. A non-chordal quasi thick tree may also fail to be a thick tree. See Fig.~\ref{fig:unquasi} for an example. This graph is in $\sQ$ since the clique separator $\bsf a$ decomposes it into two cobipartite graphs $\msf{abfd}$ and $\msf{acge}$, but we proved in section~\ref{sec:forbidden} that it is not in $\sF$. Thus $\sF\cup\sC_\triangle\subset\sQ$.

We can use properties of \CCD to prove facts about $\sC$ from those for~$\sC_0$. These are stated without proof in~\cite{BPS}, but follow easily from Prop.~\ref{prop:ccd}.
\begin{prop}\label{prop10}
  Let $H$ be a graph with no clique separator. Then any induced copy of $H$ in a graph $G$ must be contained in an atom of any \CCD of $G$.
\end{prop}
We can use Prop.~\ref{prop10} to prove perfection of $\sQ$.
\begin{prop}\label{prop20}
A hole or (long) antihole has no clique separator.
\end{prop}
\begin{proof}
Clearly a hole has no clique separator. So let the antihole be $\bar{C}$, where $|C|=k\geq5$. Any vertex in $\bar{C}$ has only two non-neighbours, so any \cs $S\subset\bar{C}$ must separate one vertex $v$ from at most two others. Thus $|S|\geq 2$ and we must have $\Nb(v)\subseteq S$. But $\Nb(v)$ induces a $P_{k-2}$ in $C$,
so $\bar{C}[S]$ has at least one non-edge, a contradiction.
\end{proof}
The following is then a direct consequence of the SPGT and Props.~\ref{prop10} and~\ref{prop20}.
\begin{lem}\label{prop:perfect}
  A graph is perfect if and only if all atoms of its \CCD are perfect.\qedhere
\end{lem}
Cobipartite graphs can contain even antiholes. In fact, an even antihole $\bar{C}_{2k}$ is a connected cobipartite graph for any $k\geq3$. 

Thus $\sQ\subseteq\sP$, but $\sQ$ has a stronger inclusion. Recall that $\sP_0$ is the class of long hole-free perfect graphs.
\begin{lem}\label{lem:holesantiholes}
 $\sQ\subset\sP_0$.
\end{lem}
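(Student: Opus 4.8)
The plan is to read this off directly from the clique cutset machinery established above. Let $G\in\sQ$, so every atom of its \CCD is either a clique or a cobipartite graph. For perfection I would note that cliques are trivially perfect and cobipartite graphs are perfect by the PGT (as recalled in Section~\ref{sec:prelims}); hence every atom of $G$ is perfect, and Lemma~\ref{lem:perfect} immediately gives that $G$ is perfect. (Since cliques and cobipartite graphs are themselves thick forests, this also follows from Corollary~\ref{cor:forestperfect}.)

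Next I would rule out long holes. Suppose $G[S]\cong C_k$ with $k\ge5$ is a long hole of $G$. By Lemma~\ref{lem:prop20} a hole has no clique cutset, so by Lemma~\ref{lem:prop10} the set $S$ lies inside a single atom $A$ of the \CCD; thus $A$ — a clique or a cobipartite graph — contains an induced $C_k$ with $k\ge5$. A clique contains no induced $C_k$ for $k\ge4$, so $A$ must be cobipartite, and then $A[S]\cong C_k$ is cobipartite too, since cobipartiteness is hereditary. But this forces $\bar{C}_k$ to be bipartite, which is false: $\bar{C}_5=C_5$ is an odd cycle, and for $k\ge6$ the complement $\bar{C}_k$ contains a triangle on any three pairwise non-adjacent vertices of $C_k$. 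This contradiction shows $G$ has no long hole, so $G\in\sP_0$; hence $\sQ\subseteq\sP_0$.

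For strictness I would exhibit a graph in $\sP_0\sm\sQ$. The graph on the right of Fig.~\ref{fig:incomparable} is weakly chordal, hence perfect and long hole-free and so in $\sP_0$; but as observed when that figure was introduced it is not a clique, not cobipartite, and has no clique cutset, so the only atom of its \CCD is the graph itself — neither a clique nor cobipartite. Thus it does not lie in $\sQ$. I do not anticipate any genuine obstacle here: the containment is essentially a corollary of Lemmas~\ref{lem:prop10},~\ref{lem:prop20} and~\ref{lem:perfect}, and the only slightly nonroutine ingredients are the elementary fact that $\bar{C}_k$ is non-bipartite for every $k\ge5$ (which disposes of the cobipartite atoms) and the selection of a correct witness for strictness.
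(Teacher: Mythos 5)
Your argument is correct and follows essentially the same route as the paper: deduce $\sQ\subseteq\sP_0$ from Lemmas~\ref{lem:prop10},~\ref{lem:prop20} and the SPGT (via Lemma~\ref{lem:perfect}), noting that clique and cobipartite atoms contain no long hole, and then exhibit a witness for strictness. The only difference is the witness: you reuse the right-hand graph of Fig.~\ref{fig:incomparable}, whereas the paper takes $K_{2,3}$ (more generally, any chordal bipartite graph with no cut edge); both are valid and your choice is perhaps more economical since it was already verified to have no clique cutset.
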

\begin{proof}
  $\sQ\subseteq\sP_0$ follows from Prop.~\ref{prop:ccd} and that cobipartite graphs are in $\sQ\subseteq\sP_0$.
  
  To see that $\sQ\neq\sP_0$, note that otherwise any atom in the \CCD of $G\in\sP_0$ must be cobipartite. Thus we need only exhibit possible atoms which are neither. Complete bipartite graphs $K_{ij}$ with $i\geq 2,j\geq3$ give an example. These are perfect, have no holes and no clique separators, yet are not cobipartite.  
\end{proof}  
   Thus the first of the forbidden subgraphs for thick forests in Fig.~\ref{fig:forb-vc} above is also a forbidden subgraph for $\sQ$, since it is $K_{2,3}$. But none of the other graphs in Figs.~\ref{fig:forb-vc} and~\ref{fig:forb-inf} are forbidden for $\sQ$, since they are chordal and/or have cut edges.
 
\section{Counting independent sets and colourings}\label{sec:counting}
\subsection{Counting independent sets in $\sQ$}\label{sec:indsets}
We consider counting weighted independent sets, meaning evaluating the weighted independence polynomial.
The vertices $v \in \VS(G)$ have non-negative \emph{weights} $w(v)\in\mathbb{Q}$.
Let $\cI_k(G)$ denote the independent sets of size $k$ in $G$, and $\alpha(G)=\max_k \{\cI_k(G)\neq\es\}$.
The weight of a subset $S$ of $V$ is defined to be $w(S)=\prod_{v \in S} w(v)$.
Then let $W_k(G) = \sum_{S \in \cI_k(G)} w(S)$, and $W(G) = \sum_{S \in \cI(G)} w(S)=\sum_{k=0}^n W_k(G)$.
The weighted independence polynomial is $P(\lambda)=\sum_{k=0}^n W_k\lambda^k$, with the $W_k$ as its coefficients,
so $P(\lambda)$ can be regarded as $W(G)$ with weights $\lambda w(v)$.

The following is proved in~\cite[Sec.\,2.2]{DJMV}.
\begin{thm}\label{thm:ccd}
Let $\cC$ be a hereditary class of graphs such that every graph in $\cC$ has a \CCD
with all atoms in the class $\cC_0\subseteq\cC$. Suppose we can evaluate $W(G)$ for any 
$G\in\cC_0$ in time $T_0(n)$, assumed $\Omega(n)$ and convex.  Then we can evaluate $W(G)$ for any $G\in\cC$ in time  $T(n)\leq 2nT_0(n)$.  
\end{thm}
Here \emph{evaluation} means either exactly in polynomial time or approximately using an FPRAS~\cite{Jerrum}. Error control for approximation is described in~\cite[Sec.\,2.2.1]{DJMV}.

The paper~\cite{DJMV} is concerned with evaluating $W(G)$ in claw- and fork-free perfect graphs. Clique cutset decomposition is used for claw-free perfect graphs, where the atoms are either graphs with small independence number or (essentially) line graphs of bipartite graphs, where $W(G)$ can evaluated using the algorithm of~\cite{DJMV}. The extension to fork-free graphs uses a different technique, \emph{modular decomposition}, which we will not discuss here.

Thick forests are perfect, as we have seen, but are not claw-free or fork-free in general. A thick tree $G$ can contain a claw, possibly many, unless the thin tree $H$ is a path, and can contain a fork unless the thin tree $H$ is a star, that is, $G$ is unipolar. Thus thick forests are perfect graphs, but not necessarily claw-free or fork-free, so are not within the class of graphs considered in~\cite{DJMV}. Note that a claw or a fork, in fact any tree, does not fall within the scope of Prop.~\ref{prop10} since they have clique separators of size one.

We can use Theorem~\ref{thm:ccd} to evaluate $W(G)$ for $G\in\sQ$, implying the same result for $\thksf\sF$. The difference from~\cite{DJMV} is in the atoms of the \CCD. Here they are simply cliques and cobipartite graphs. For these we can evaluate the $W_k$ exactly in linear time. For a clique $K$, $W_0=1$, $W_1=\sum_{v\in K}w(v)$, $W_k=0$ ($k>1$). For a cobipartite graph $H$,  $W_0=1$, $W_1=\sum_{v\in H}w(v)$, $W_2=\sum_{uv\notin \ES(H)}w(u)w(v)$ and $W_k=0$ ($k>2$). Thus we can evaluate $W(G)$ exactly for $G\in\sQ$, and there is no need for the error control. Thus we have a deterministic algorithm for counting weighted independent sets in $\sQ$.

A consequence of exact evaluation of $W(G)$ is that we can evaluate $W_k(G)$ exactly for all $0\leq k \leq n$ by interpolation. We can evaluate $P(\lambda)$ by modifying the weights $w(v)$ to $\lambda w(v)$. Since $W_0=1$, if we do this for $\alpha(G)$ positive values of $\lambda$, we can solve $\alpha$ linear equations for the coefficients $W_k$ $(k\in[\alpha])$.

\subsection{Counting colourings in $\sQ$}\label{sec:colouring}
We will consider evaluating $\ncol(G)$ for $G\in\sQ$ using \CCD in a similar way Theorem~\ref{thm:ccd}. 

The proof of Theorem~\ref{thm:ccd} given in~\cite{DJMV} remains valid for any quantity if a clique cutset $G_1\cap G_2$  can be used to lift evaluations of the quantity from $G_1$ and $G_2$ to their union $G=G_1\cup G_2$. For $W(G)$ this uses the simple fact that any clique can only contain one vertex of an independent set. To evaluate $\ncol(G)$ we may replace this by the following \emph{clique cutset colouring lemma}~(CCCL).

\begin{lem}[CCCL]\label{lem:colcut}
Suppose $G=(V,E)$ and $K\subseteq V$ is a clique separator of size $k=|K|$ in $G$.  Let  $G_i=G[V_i]$ ($i\in[2]$),
where $K=V_1\cap V_2$. Then $\ncol(G)=\ncol(G_1)\ncol(G_2)/(q)_k$.
\end{lem}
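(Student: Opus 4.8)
The plan is to count proper $q$-colourings of $G$ by conditioning on the colouring restricted to the clique cutset $K$. Since $K$ is a clique, any proper colouring $\psi$ of $G$ assigns $k$ distinct colours to the $k$ vertices of $K$, so there are exactly $(q)_k$ possible restrictions $\psi|_K$, and by symmetry of the colour set each of these extends to the same number of proper colourings of $G$. Concretely, fix one injective assignment $\varphi_0:K\to[q]$; then $\ncol(G)=(q)_k\cdot N$, where $N$ is the number of proper colourings of $G$ agreeing with $\varphi_0$ on $K$.

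Next I would observe that, because $K$ separates $G$ into $G_1=G[V_1]$ and $G_2=G[V_2]$ with $V_1\cap V_2=K$ and no edges between $V_1\setminus K$ and $V_2\setminus K$, a proper colouring of $G$ extending $\varphi_0$ is precisely a pair consisting of a proper colouring of $G_1$ extending $\varphi_0$ and a proper colouring of $G_2$ extending $\varphi_0$, chosen independently. Hence $N=N_1N_2$, where $N_i$ is the number of proper colourings of $G_i$ agreeing with $\varphi_0$ on $K$. Applying the same symmetry argument in reverse to each $G_i$ (the cutset $K$ is still a clique in $G_i$, so again each of the $(q)_k$ injective assignments $K\to[q]$ extends equally often), we get $\ncol(G_i)=(q)_k N_i$, i.e. $N_i=\ncol(G_i)/(q)_k$. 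Combining,
\[
\ncol(G)=(q)_k\,N_1N_2=(q)_k\cdot\frac{\ncol(G_1)}{(q)_k}\cdot\frac{\ncol(G_2)}{(q)_k}=\frac{\ncol(G_1)\ncol(G_2)}{(q)_k},
\]
which is the claimed identity.

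The one point that needs a little care — and which I expect to be the main (though still routine) obstacle — is justifying the ``by symmetry each restriction extends equally often'' step rigorously: one should note that any permutation $\sigma$ of $[q]$ acts on proper colourings of a graph, this action is a bijection, and it acts transitively on the $(q)_k$ injective maps $K\to[q]$; a colouring $\psi$ with $\psi|_K=\varphi$ is carried to one with restriction $\sigma\circ\varphi$, so the fibres over the various injective restrictions all have the same size, namely $\ncol/(q)_k$. (Implicitly we use $\ncol(G)>0$ only to make the division meaningful; if $\ncol(G)=0$ then one of the $G_i$ has no colouring extending $\varphi_0$, hence $\ncol(G_i)=0$, and the formula holds trivially as $0=0$.) Everything else is bookkeeping about the cutset.
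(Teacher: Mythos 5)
Your proof is correct and takes essentially the same approach as the paper: fix a colouring of $K$, use colour-permutation symmetry to argue that each of the $(q)_k$ injective colourings of $K$ extends to equally many proper colourings, and use the separation property to factor extensions into independent choices on $G_1$ and $G_2$. You are somewhat more explicit (introducing the intermediate counts $N$, $N_1$, $N_2$) where the paper phrases the same argument as correcting for overcounting by a factor of $(q)_k$ in the product $\ncol(G_1)\ncol(G_2)$, but the underlying reasoning is identical.
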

\begin{proof}
$K$ has $(q)_k$ colourings, each corresponding to permuting a selection of $k$ of the $q$ colours.
Fix a colouring of $K$ and count its extensions to colourings of $G$, $G_1$ and $G_2$. The symmetry of $q$-colourings under permutation of colours implies that these numbers are the same for each colouring of $K$.
However, in the product $\ncol(G_1)\ncol(G_2)$, each colouring of $G$ is counted $(q)_k$ times, so we
correct for this overcounting. We do not need to assume $G$ is connected if we allow $\es$ as a clique separator of size~0.
\end{proof}
Note that the proof of the CCCL relies completely on the symmetries of both $q$-colouring and cliques. Thus it does have any obvious generalisations.

Thus, if we evaluate $\ncol$ for all atoms of the \CCD of $G$, we can use the CCCL to evaluate $\ncol(G)$. This gives the following analogue of Theorem~\ref{thm:ccd}.
\begin{thm}\label{thm:ccdcol}
Let $\cC$ be a hereditary class of graphs such that every graph in $\cC$ has a \CCD
with all atoms in the class $\cC_0\subseteq\cC$. Suppose we can evaluate $\ncol(G)$ for any 
$G\in\cC_0$ in time $T_0(n)$, assumed $\Omega(n)$ and convex.  Then we can evaluate $\ncol(G)$ for any $G\in\cC$ in time  $T(n)\leq 2nT_0(n)$.  
\end{thm}

In~\cite{DJMV} colourings could not be counted since evaluation of \ncol in the atoms that are line graphs of bipartite graphs is not possible in general. That is, we cannot count edge-colourings of bipartite graphs, even approximately.

To apply Theorem~\ref{thm:ccdcol} to $\sQ$, we must consider evaluation of $\ncol(G)$ when $G$ is a clique or cobipartite graph. The number of colourings of a clique of size $k$ is exactly $(q)_k$ from above. Note that if $G$ is a chordal graph, there are no cobipartite atoms, so we have an exact algorithm for counting $q$-colourings of chordal graphs. We could implement this to count colourings in chordal graphs in $\cO(m+n)$ time, but we note this can be done more easily using the algorithm of~\cite[Rem.\,2.5]{Agnar}. 

By contrast, we show below that exact counting is \#P-complete for  cobipartite graphs. So we can only count colourings approximately. If we can approximate $\ncol$ for cobipartite graphs, we can use the CCCL to approximate $\ncol(G)$. We use the following.
\begin{lem}\label{lem:cobipcol}
Let $G$ be a cobipartite graph on $n$ vertices. Let $B=\bar{G}$ and let $\ddot{B}$ be its bipartite complement. Let $\kappa_k(\ddot{B})$ be the number matchings of size $k$ in $\ddot{B}$. Then
\[  \ncol(G)=\sum_{k=n-q}^n \kappa_k(\ddot{B})(q)_{n-k}.\]
\end{lem}
\begin{proof}
Consider $q$-colouring a cobipartite graph $G$ on $n=n_1+n_2$ vertices, having cliques $C_i=(V_i,E_i)$  ($|V_i|=n_i$, $i\in[2]$), connected by a bipartite graph $B=(V_1\cup V_2,E_{12})$.  Then $C_i$ can be coloured in $(q)_{n_i}$ ways, since all vertices must be coloured differently. Also vertices $v_i\in V_i$ ($i\in[2]$) must be coloured differently if $\{v_1,v_2\}\in E_{12}$. Thus vertices in $V_1,V_2$ can only receive the same colour if they form a \emph{matching} in $\ddot{B}$, the bipartite complement of $B$. If $\ddot{B}$ has a matching $M$ of size $k$, $G$ can be coloured in $(q)_{n-k}$ ways, by arbitrarily colouring all vertices not in $M$ and one vertex from each edge in $M$. Thus, if $\ddot{B}$ has $\kappa_k$ matchings of size $k$,  $G$ has $\kappa_k(q)_{n-k}$ colourings corresponding to these. Summing over the possible values of $k$ now gives the result.
\end{proof}
An alternative proof of Lemma~\ref{lem:cobipcol} can be given using the perfection of cobipartite graphs, but it is not simpler than the above.

The following is then an easy consequence of Lemma~\ref{lem:cobipcol}.
\begin{lem}\label{lem:thickcol}
  Counting the number of $q$-colourings of a cobipartite graph is \#P-complete.
\end{lem}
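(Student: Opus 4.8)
The plan is to reduce from counting perfect matchings in a bipartite graph, which is $\#P$-complete by Valiant~\cite{Valiant}. Given a bipartite graph $\bar B = (V_1 \cup V_2, \bar E)$ with $|V_1| = |V_2| = q$, I would construct the cobipartite graph $G$ whose complement (restricted to the bipartite part) is exactly $\bar B$: that is, take two cliques $C_1$ on $V_1$ and $C_2$ on $V_2$, and join $v_1 \in V_1$ to $v_2 \in V_2$ precisely when $v_1 v_2 \notin \bar E$. This $G$ is cobipartite by construction, and it can be built in polynomial time.

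Next I would invoke the colouring count~\eqref{eq:chrompoly} derived just above the statement: with $n_1 = n_2 = q$ we have $n = 2q$, and the sum $\ncol(G) = \sum_{k=n-q}^{n} \kappa_k(\bar B)(q)_{n-k}$ runs over $k$ from $q$ to $2q$. The key observation is that the falling factorial $(q)_{n-k} = (q)_{2q-k}$ is zero whenever $2q - k > q$, i.e.\ whenever $k < q$, so only the top term $k = 2q = n$ can survive — but wait, I must be careful: $(q)_{2q-k}$ is nonzero exactly when $2q - k \le q$, i.e.\ $k \ge q$, which is the whole range of the sum. Let me restate: for $k$ strictly between $q$ and $2q$ the term is $\kappa_k(\bar B)(q)_{2q-k}$ with $0 < 2q-k < q$, which is a nonzero multiple of a quantity we do not a priori know. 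So a single evaluation does not isolate $\kappa_q(\bar B)$; instead the right move is the interpolation/parsimonious-reduction trick.

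Here is the cleaner route. Observe that $\kappa_q(\bar B)$ is precisely the number of perfect matchings of $\bar B$, since a matching of size $q$ in a bipartite graph with both sides of size $q$ must be perfect. To extract it, I would evaluate~\eqref{eq:chrompoly} as a polynomial identity in $q$: for a fixed bipartite graph $\bar B$ on $2q_0$ vertices (with parts of size $q_0$), the function $q \mapsto \ncol(G)$ where we $q$-colour the same fixed graph $G$ is a polynomial in $q$ of degree $n = 2q_0$, namely $\sum_{k} \kappa_k(\bar B)\,(q)_{n-k}$. By evaluating $\ncol(G)$ (via the oracle) at $n+1$ distinct integer values $q = 0, 1, \dots, n$ and Lagrange-interpolating in the falling-factorial basis $\{(q)_{j}\}_{j=0}^{n}$, we recover all coefficients $\kappa_k(\bar B)$, in particular $\kappa_{q_0}(\bar B)$, the number of perfect matchings of $\bar B$. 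This is a polynomial-time Turing reduction from $\#$Perfect-Matchings on bipartite graphs to counting $q$-colourings of cobipartite graphs. Membership in $\#P$ is immediate since a $q$-colouring is a polynomial-size certificate checkable in polynomial time.

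The main obstacle — really the only subtlety — is making sure the interpolation is legitimate: one must check that for a \emph{fixed} cobipartite graph $G$ the map $q \mapsto \ncol(G)$ genuinely agrees with the polynomial $\sum_k \kappa_k(\bar B)(q)_{n-k}$ at all nonnegative integers $q$ (not merely at $q = q_0$), which is exactly what the derivation of~\eqref{eq:chrompoly} gives since that derivation used no constraint relating $q$ to $n_1, n_2$. Given that, the falling factorials $(q)_0, (q)_1, \dots, (q)_n$ are linearly independent as polynomials, so the $n+1$ evaluations determine the coefficients uniquely, and the whole reduction is polynomial time. I would also remark that the same argument shows $\#P$-hardness already for the unweighted problem and for $q$ part of the input; if a fixed $q$ is wanted one can note that for $q = q_0$ a single call already gives $q_0!\,\kappa_{q_0}(\bar B)$ plus lower terms that telescope away because $(q_0)_{2q_0 - k} = 0$ for $k < q_0$ — here indeed $2q_0 - k > q_0$ forces the falling factorial to vanish — so $\ncol(G) = q_0!\,\kappa_{q_0}(\bar B)$ exactly, recovering the perfect matching count with one oracle call, which is the slick version the statement's surrounding text is pointing at.
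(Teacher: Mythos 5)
Your reduction target and construction are the same as the paper's: reduce from counting perfect matchings in a bipartite graph $\bar B$ on two $q$-vertex parts, build the cobipartite $G$ whose non-edges across the cut are exactly the edges of $\bar B$, and read the matching count off \eqref{eq:chrompoly}. The interpolation you fall back on is a legitimate polynomial-time Turing reduction and does prove the lemma, but it is an unnecessary detour, and the reason you resort to it is a concrete error in your analysis of the single-call case.

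With $n_1=n_2=q$, so $n=2q$, the sum in \eqref{eq:chrompoly} runs over $k=q,\dots,2q$. You correctly observe that $(q)_{2q-k}$ is nonzero throughout this range, and from that you conclude a single evaluation cannot isolate $\kappa_q(\bar B)$. That conclusion is wrong because you have ignored the other factor in each term: for $k>q$ we have $\kappa_k(\bar B)=0$, since a bipartite graph with only $q$ vertices on each side has no matching of size larger than $q$. So every term with $k>q$ vanishes and one call already yields $\ncol(G)=q!\,\kappa_q(\bar B)$, which is the paper's argument. Your final paragraph then tries to rescue the one-call version by claiming that the surplus terms vanish because $(q)_{2q-k}=0$ for $k<q$; but those indices are not in the sum at all (it starts at $k=n-q=q$), and the falling factorial is nonzero exactly on the range that is there. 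You never invoke $\kappa_k(\bar B)=0$ for $k>q$, which is the one fact that actually makes the sum collapse.
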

\begin{proof}
 If $n_1=n_2=q$, then $\kappa_k=0$ for $k>q$ and $\ncol(G)= \kappa_k q!$ from Lemma~\ref{lem:cobipcol}. So
 $\ncol(G)/q!$ is the number of perfect matchings of $\ddot{B}$. Thus we have a
 a reduction to $\ncol$ from counting the number of perfect matchings in a bipartite graph.   Valiant~\cite{Valiant} showed this to be \#P-complete.
\end{proof}
Thus we cannot hope for exact counting, but there is an FPRAS (fully polynomial randomised approximation scheme) to approximate the number of $k$-matchings in $G$ to relative error $\eps$ with high probability~\cite{JSV}.  Since Lemma~\ref{lem:cobipcol} gives a linear function of  $\kappa_k$ with positive coefficients, we can use an FPRAS to approximate the $\kappa_k$ to give an FPRAS to approximate $\ncol(G)$ on any cobipartite graph for any $q$. We can then use Theorem~\ref{thm:ccdcol} to give an FPRAS for $\ncol(G)$ for any $G\in\sQ$ and any $q$. 

Finally, note that if the chromatic number $\chi(G)$ for $G\in\sQ$ is a parameter, there is an easy FPT algorithm for counting colourings exactly in $\sQ$. Since $G$ is perfect $\chi$ is the size of the largest clique. Thus the largest clique in $G$ has at most $\chi$ vertices, or else $\ncol(G)=0$. Therefore any atom of the \CCD of $G$ is either a clique of size at most $\chi$ or a cobipartite graph $G_0$ such that $\bar G_0$ has a bipartition with both parts of size at most~$\chi$. Then $\bar G_0$ has at most $\sum_{k=0}^\nu\binom{\nu}{k}^2k!=\cO(\nu^{2\nu})$ matchings of all sizes $k$, which we can count exactly by brute force. We can then use Lemma~\ref{lem:cobipcol} to count the number of colourings of $G_0$. Then we use Theorem~\ref{thm:ccdcol} to extend the counting from the atoms to the whole of $G$. This algorithm has time complexity $\cO(\nu^{2\nu}n)$ from Theorem~\ref{thm:ccdcol}, so is in FPT.

\subsection{Counting colourings of a graph not in $\thksf\sF$}\label{sec:colouringbeyond}
We will show that approximately counting proper colourings of a graph a thin graph which is not a forest is NP-hard. Thus the result of section~\ref{sec:colouring} is best possible.

First we show that
\begin{lem}\label{lem:tricol}
If $\triangle$ is a triangle, it is NP-complete to determine if $T\in\thksf{\triangle}$ has a proper $q$-colouring.
\end{lem}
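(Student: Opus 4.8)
The plan is to rephrase the question via complementation and then reduce from \textsc{3-Dimensional Matching}. First I would set up the dictionary. Since every pair of vertices of $\triangle$ forms an edge, the second condition in the definition of a model is vacuous, so $T\in\thksf\triangle$ holds precisely when $V(T)$ can be partitioned into at most three cliques, i.e.\ when $\theta(T)\le 3$, i.e.\ when the complement $\bar T$ is $3$-colourable. Likewise $T$ has a proper $q$-colouring exactly when $\chi(T)\le q$, equivalently $\theta(\bar T)\le q$. Writing $F=\bar T$, the problem therefore becomes: given a $3$-partite graph $F$ together with an explicit tripartition, and an integer $q$, decide whether $V(F)$ can be covered by $q$ cliques. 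Membership in NP is immediate: a $q$-colouring of $T$, equivalently a partition of $V(F)$ into $q$ cliques, is a polynomial-size certificate that is checkable in polynomial time. Here $q$ must be regarded as part of the input, since for any fixed $q$ the problem is trivial: a $3$-partite graph on $N$ vertices needs at least $\lceil N/3\rceil$ cliques to cover it, so only instances with $|V(T)|\le 3q$ are interesting and those are settled by brute force.

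For hardness I would reduce from \textsc{3-Dimensional Matching}: given disjoint $n$-element sets $X,Y,Z$ and a set $M\subseteq X\times Y\times Z$, decide whether $n$ of the triples of $M$ partition $X\cup Y\cup Z$. From such an instance I would build a $3$-partite graph $F$ with parts $P_1\supseteq X$, $P_2\supseteq Y$, $P_3\supseteq Z$ by attaching to each triple $t=(x,y,z)\in M$ a constant-size ``triple gadget'' on fresh vertices private to $t$, distributed among the three parts so that $x,y,z$ keep their place in $P_1,P_2,P_3$; the gadget is designed so that the gadget together with $\{x,y,z\}$ admits a partition into triangles in essentially two ways -- one using (hence consuming) all of $x,y,z$ and one using none of them -- and so that no triangle of $F$ joins two distinct gadgets. (Pad with a few disjoint triangles if needed so that $|V(F)|$ is a multiple of $3$.) Now set $T=\bar F$, a genuine thick triangle whose model is read off from the tripartition of $F$, and $q=|V(F)|/3$. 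Any clique cover of $F$ of size $q$ must consist entirely of triangles; since each element of $X\cup Y\cup Z$ occurs only once in $F$, in such a partition every element is consumed by the gadget of exactly one triple containing it, and the triples whose gadgets are in the ``consuming'' mode form a perfect $3$-dimensional matching; conversely a perfect matching produces such a triangle partition. Hence $T$ has a proper $q$-colouring if and only if the instance is a yes-instance, and since \textsc{3-Dimensional Matching} is NP-complete, so is our problem.

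The main obstacle is the gadget, and the verification that it does exactly what is claimed. The difficulty is intrinsic: a thick edge of $\triangle$ is merely an arbitrary bipartite graph between two cliques, so all structure available in $F=\bar T$ is \emph{binary} -- it lives between two of the three parts -- whereas a $3$-dimensional matching constraint is genuinely \emph{ternary}. One cannot, for instance, simply take $F$ to have parts $X,Y,Z$ with an edge $xy$ whenever $x$ and $y$ occur together in some triple, because this creates spurious triangles $\{x,y,z\}$ that correspond to no member of $M$. The gadget must realise the ternary relation $M$ out of pairwise adjacencies while provably creating no unwanted triangle partitions; this is precisely the content of the (by now standard) NP-completeness of \textsc{Partition into Triangles} restricted to $3$-partite graphs, derived from \textsc{3-Dimensional Matching} via such a gadget, and it is the only genuinely technical point in the argument.
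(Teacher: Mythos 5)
Your proposal takes essentially the same route as the paper: pass to the complement $\bar T$, observe that a proper $q$-colouring of $T$ (with all three thick vertices equal to $K_q$) corresponds precisely to a partition of the tripartite graph $\bar T$ into $q$ induced triangles, and invoke the NP-completeness of Partition into Triangles on tripartite graphs. The paper discharges that last step by citing \cite[Prop.~5.1]{CKW} (which gives hardness even for $6$-regular $\bar T$), which is exactly where your sketch leaves the triple gadget unspecified; you correctly identify this as the only technical point, so the proposal is the same argument with the gadget construction outsourced rather than executed.
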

\begin{proof}
We consider $q$-colouring a $T=(V,E)\in\thksf\triangle$ with three $K_q$ nodes $\bs{a,b,c}$. (See Fig.~\ref{fig:thicktricol}.) Note that the nodes $\bs{a,b,c}$ are fixed and only the links $\bs{ab,bc,ca}$ are the input. Then each of the $q$ colours must occur exactly once in each node. Observe now that $T$ can be properly $q$-coloured if and only if the vertices of $\Tbar=(V,\bar{E})$ can be partitioned into $q$ induced triangles.
But it is shown in~\cite[Prop.\,5.1]{CKW} that it is NP-complete to decide if $\Tbar$ has such a partition,
even if $\Tbar$ is 6-regular or, equivalently, $G$ is ($3q-7$)-regular.
\end{proof}

\begin{figure}[htb]
\centerline{\begin{tikzpicture}[line width=0.33pt]
    \foreach \n\x in {a/90, b/210, c/330}{
    \node at (\x:2.8) {$\bs\n$};
    \foreach \y in {1,2,3,4}{
    \coordinate (\n\y) at (\x:0.5+0.5*\y) {};}}
    \node[v,fill=yellow] (A1) at (a1){}; \node[v,fill=yellow] (B1) at (b1){}; \node[v,fill=yellow] (C1) at (c1) {};
    \node[v,fill=red] (A2) at (a2){}; \node[v,fill=blue] (B2) at (b2){}; \node[v,fill=green] (C2) at (c2) {}; \node[v,fill=green] (A3) at (a3){};\node[v,fill=red] (B3) at (b3){}; \node[v,fill=blue] (C3) at (c3) {};
    \node[v,fill=blue] (A4) at (a4){}; \node[v,fill=green] (B4) at (b4){}; \node[v,fill=red] (C4) at (c4) {};
    \foreach \nn\xx in {A/90,B/210,C/330}{
    \draw (\nn1)--(\nn2)--(\nn3)--(\nn4) ;
    \draw (\nn4) to[out=\xx-125,in=\xx-50] (\nn1);
    \draw (\nn4) to[out=\xx-140,in=\xx-50] (\nn2);
    \draw (\nn1) to[out=\xx+50,in=\xx+120] (\nn3);
    \foreach \xxx in {2,4}{\foreach \yy in {1,4}{
    \draw[line width=0.25pt] (A\xxx)--(B\yy) (B\xxx)--(C\yy) (C\xxx)--(A\yy);}}}
    \end{tikzpicture}}
\caption{A proper 4-colouring of a thick triangle with $K_4$ nodes}
  \label{fig:thicktricol}
\end{figure}

\begin{figure}[hbtp]
\centerline{%
\begin{tikzpicture}[xscale=3,yscale=0.7]
\foreach \j/\k in {1/2,1/3,1/4,2/1,2/3,2/4,3/1,3/2,3/4,4/1,4/2,4/3}{
\draw (1,\j)--(2,\k);};
\foreach \i in {1,2}{
\foreach \j/\k in {4/red,3/blue,2/green,1/yellow}{
\node[fill=\k,v] (\i\j) at (\i,\j) {} ;}
\draw (\i1)--(\i2)--(\i3)--(\i4);};
\draw (12) to[in=-100,out=100] (13)(12) to[in=-105,out=105] (14);
\draw (11) to[in=-100,out=100] (12)(11) to[in=-105,out=105] (13);
\draw (13) to[in=-100,out=100] (14)(11) to[in=-107,out=107] (14);
\draw (23) to[in=80,out=-80] (22)(24) to[in=75,out=-75] (22);
\draw (22) to[in=80,out=-80] (21)(23) to[in=75,out=-75] (21);
\draw (24) to[in=80,out=-80] (23)(24) to[in=73,out=-73] (21);
\end{tikzpicture}}
\caption{A copy link with $q=4$}
  \label{fig:thickcopy}
\end{figure}

We can extend this to arbitrary $H$ using \emph{copy} links. These are cobipartite graphs with both cliques of size $q$, and a bipartite graph which is a $K_{q,q}$ minus a matching. (See~Fig.~\ref{fig:thickcopy}.) The only proper colourings of such a link are those which have the sam colourings of its thick end vertices. 

\begin{lem}\label{lem:Nonforests}
If $H\notin\sF$, it is NP-complete to determine if $G\in\thksf{H}$ has a proper $q$-colouring.
\end{lem}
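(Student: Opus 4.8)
The plan is to reduce from the thick‑triangle colouring problem of Lemma~\ref{lem:tricol}. Membership in NP is immediate: one guesses and checks a proper $q$‑colouring (and, if the input is not promised to lie in $\thksf H$, also a model $\psi$ witnessing $G\in\thksf H$, which is polynomial to verify since $H$ is fixed). So it suffices to give a polynomial reduction from the problem ``given the three thick edges of a thick triangle $T=\bs a\bs b\bs c$ with $K_q$ thick vertices, is $T$ properly $q$‑colourable?'' to ``given $G\in\thksf H$, is $G$ properly $q$‑colourable?''. We may assume $H$ is connected, since otherwise we carry out the construction inside a component of $H$ that contains a cycle and pad the remaining components as described below.

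First I would use $H\notin\sF$ to fix a \emph{shortest} cycle $C=v_0v_1\cdots v_{k-1}v_0$ of $H$; being shortest, $C$ is chordless, so $H$ has no edge between non‑consecutive vertices of $C$. Given a thick‑triangle instance $T$, build $G$ as follows. Introduce cliques $Q_0,\dots,Q_{k-1}$, each a $K_q$, to occupy the thick vertices $\bs{v_0},\dots,\bs{v_{k-1}}$. Put the thick edge $\bs{ab}$ of $T$ between $Q_0$ and $Q_1$, and the thick edge $\bs{bc}$ between $Q_1$ and $Q_2$. For $2\le i\le k-2$ make $\bs{v_iv_{i+1}}$ a \emph{copy} thick edge (a $K_{q,q}$ minus a perfect matching), and between $Q_{k-1}$ and $Q_0$ put the thick edge $\bs{ca}$ of $T$, read through the identification of $Q_{k-1}$ with $\bs c$ induced by the chain of copy edges. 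Every thick vertex of $H$ off $C$ is made a singleton, and every thick edge of $H$ off $C$ (including those incident to $C$) is made \emph{empty}: no edge of $G$ between the two cliques. This is legal because a cobipartite graph may have no cross edges (so $K_1\uplus K_q$ and $\bar K_2$ are valid thick edges) and because a model need not realise every edge of the thin graph; consequently every vertex of $G$ lying off the $Q_i$'s is isolated, and the obvious $\psi$ is a model with thin graph exactly $H$, so $G\in\thksf H$. The construction is clearly polynomial.

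For correctness, note that a copy thick edge between two $K_q$'s forces, in any proper $q$‑colouring, that matched vertices receive the same colour: the colour of a vertex $x$ cannot reappear on any neighbour of $x$ in the other clique, and $x$'s only non‑neighbour there is its match, which must therefore carry that colour. Hence along $Q_2,Q_3,\dots,Q_{k-1}$ the colouring is transported unchanged, so (after renaming) the only surviving constraints on $(Q_0,Q_1,Q_2)$ are precisely the three thick edges $\bs{ab},\bs{bc},\bs{ca}$ of $T$; thus $G[Q_0\cup\cdots\cup Q_{k-1}]$ is properly $q$‑colourable iff $T$ is. Since every other vertex of $G$ is isolated, $G$ is properly $q$‑colourable iff $T$ is. (When $k=3$ there are no copy edges and $G$ is just $T$ together with isolated vertices.) Combining the reduction with Lemma~\ref{lem:tricol} and NP membership yields NP‑completeness.

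The step I expect to require the most care is ensuring that the long cycle $C$ introduces no spurious constraints: this is exactly why $C$ must be chordless (so that $G$ has no edges between non‑consecutive $Q_i$'s) and why the arc $v_2\cdots v_{k-1}$ is implemented with copy edges, which only transport the colouring and so leave the three edges $v_0v_1$, $v_1v_2$, $v_{k-1}v_0$ free to carry the triangle's thick edges. A secondary, routine point is checking that each gadget (copy edges, empty thick edges incident to a $K_q$, the padding singletons) is genuinely a thick graph of $H$; this amounts to unwinding the definition of a model, using that thin‑graph edges need not be present in $G$.
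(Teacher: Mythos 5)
Your proof is correct and uses essentially the same reduction as the paper: embed the thick-triangle instance of Lemma~\ref{lem:tricol} on three consecutive vertices of a cycle of $H$ and propagate the colouring around the rest of the cycle with copy thick edges. The only cosmetic difference is in how the rest of $H$ is padded — the paper labels every remaining vertex $c$, realises every remaining edge as either a $T$-edge or a copy edge, and keeps all thick vertices $K_q$; you instead exploit that a model need not realise edges of $H$, choose a chordless cycle, and collapse everything off the cycle to isolated singletons, which makes the correctness argument slightly more self-contained but is not a genuinely different strategy.
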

\begin{proof}
We construct $G\in \thksf H$ as follows. All nodes of $G$ will be $K_q$'s. Let $T\in\thksf\triangle$ be a thick triangle with thick $K_q$ vertices $\bs{a,b,c}$ and input links $\bs{ab}$, $\bs{bc }$, $\bs{ca}$. We will label the vertices of $H$  with \msf{a, b} or \msf c, and associate these labels with the nodes $\bs{a,b,c}$ of $T$. 

If $H\notin\sF$, it has a cycle $C$. Label any three successive vertices of $C$ with \msf a, \msf b and \msf c. Now label all other vertices in $H$ with \msf c. If an edge $uv$ in $H$ is labelled \msf{ab}, let $\thksf{uv}\cong\bs{ab}$ in $G$. If an edge  $uv$ of $H$ is labelled $\msf{xc}$ for $\msf x\in\bsf{a,b}$, we let $\thksf{uv}\cong\bs{xc}$ in $G$. Finally if an edge $uv$ of $H$ is labelled \msf{cc}, let $\thksf{uv}$ be a copy edge in $G$. Now it is clear that $G$ has a proper $q$-colouring if and only if $T$ has a proper $q$-colouring.
\end{proof}
Thus we have the following. Note that the class $\sC$ is not restricted to be hereditary.
\begin{thm}\label{thm:nocol}
  If $H\notin\sF$, then counting proper $q$-colourings of graphs in $\thksf H$  is complete for {\rm\#P} under {\rm AP}-reducibility.
\end{thm}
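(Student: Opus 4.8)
The plan is to reduce the decision problem ``does $G\in\thksf\sC$ have a proper $q$-colouring'' to the approximate counting problem, and then invoke Lemma~\ref{lem:Nonforests} for the hardness of the decision problem when $\sC\not\subseteq\sF$. The key observation is a standard one in the theory of AP-reducibility: approximating $\ncol(G)$ to within any polynomial factor lets us decide whether $\ncol(G)>0$, since the two cases ($\ncol(G)=0$ versus $\ncol(G)\geq 1$) are separated by a factor that no multiplicative approximation can bridge. More precisely, any FPRAS for $\ncol$ on $\thksf\sC$ would, when run with (say) $\eps=\tfrac12$, output a value that is $0$ exactly when $\ncol(G)=0$ and is at least $\tfrac12$ when $\ncol(G)\geq 1$; this decides $\col(G)$ in randomised polynomial time. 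Since $\sC\not\subseteq\sF$, Lemma~\ref{lem:Nonforests} gives a graph $H\in\sC\sm\sF$ for which deciding $\col_q(G)$ for $G\in\thksf H$ is NP-complete. Hence an FPRAS for counting $q$-colourings of graphs in $\thksf\sC$ would place NP in RP, which we do not expect; more to the point, for the AP-reducibility framework this shows the counting problem is at least as hard as any problem in \#P, i.e.\ \#P-hard under AP-reductions.

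The second half is membership: counting proper $q$-colourings of any graph is trivially in \#P (given a colouring, checking it is proper is polynomial), and $\thksf\sC$ is a subclass of all graphs, so the counting problem lies in \#P. Combining membership with the AP-hardness gives completeness for \#P under AP-reducibility, which is exactly the assertion of Theorem~\ref{thm:nocol}. The statement that $\sC$ need not be hereditary is harmless: Lemma~\ref{lem:Nonforests} only requires the existence of a single non-forest $H\in\sC$, and no heredity is used in its proof.

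I would organise the write-up as: (i) note $\col_q$ on $\thksf H$ is NP-complete for the fixed non-forest $H\in\sC\sm\sF$ supplied by Lemma~\ref{lem:Nonforests}; (ii) recall the elementary fact that a counting problem whose associated ``is the count positive?'' decision problem is NP-hard cannot admit an AP-reduction to (equivalently, be AP-reduced from) an easy problem unless NP collapses, and more precisely that such a counting problem is \#P-hard under AP-reductions — this is the formal content of ``approximation is as hard as exact decision here''; (iii) observe \#P-membership; (iv) conclude \#P-completeness under AP-reducibility. The only mildly delicate point, and the one I would be most careful to state correctly, is step~(ii): one must phrase it so that it is about AP-reducibility (Dyer--Goldberg--Greenhill--Jerrum style), where a problem whose support is NP-hard to recognise is automatically complete for the whole of \#P under AP-reductions, rather than merely asserting ``no FPRAS unless RP${}={}$NP''. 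That is the main obstacle — not a mathematical difficulty but a matter of citing and applying the right notion of reduction — and I would handle it by pointing to the framework in~\cite{DyGoGJ} (and the general references on counting complexity) and noting that the trivial reduction which just asks for a $\tfrac12$-approximation already separates the NP-hard ``yes'' and ``no'' instances.
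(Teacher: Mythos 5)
Your proposal is correct and follows essentially the same route as the paper: the paper's proof is the one-liner "Follows directly from Lemma~\ref{lem:Nonforests} and \cite[Thm.\,1]{DyGoGJ}," and your write-up simply unpacks what that citation to the Dyer--Goldberg--Greenhill--Jerrum framework is doing. Your explicit note that heredity of $\sC$ is never used matches the paper's remark immediately preceding the theorem, and your flagged "delicate point" about phrasing hardness in terms of AP-reducibility rather than merely "no FPRAS unless RP${}={}$NP" is precisely what the appeal to \cite[Thm.\,1]{DyGoGJ} resolves.
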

\begin{proof}
  Follows directly from Lemma~\ref{lem:Nonforests} and \cite[Thm.\,1]{DyGoGJ}.
\end{proof}

\section{Parameterisation}\label{sec:parameter}
Thick forests are a tractable class, but this does not seem true for most classes, in the light of Lemma~\ref{lem:thickNP}. However, we can consider fixed-parameter tractability, as we did for exactly counting colourings in $\sQ$ in section~\ref{sec:colouring}. Here we consider two possible parameterisations by properties of the thin graph. 
\subsection{Parameterising by the size of the thin graph}\label{sec:fixedparam}
The most natural parameter for thick graphs is the size of the thin graph. Even so, we must restrict attention to triangle-free graphs, the class~$\sT$. Otherwise, the recognition problem is NP-complete~\cite{MacYu}. Let us denote the class of triangle-free graphs with at most $\nu$ vertices by $\sT_\nu=\sT\cap\sG_\nu$. Then, given a graph $G$,  we consider deciding whether $G\in\thksf{\sT_\nu}$ with $\nu$ as a parameter.

The case where $\sC=\{H\}$ for a fixed graph $H$ was resolved by MacGillivray and Yu~\cite{MacYu}. They proved the following, which we recast in our own notation.
\begin{thm}[MacGillivray-Yu]\label{thm:MacYu}
Let $\nu=|\VS(H)|$. If $H\in\sT$, the decision problem $G\in\thksf H$ with parameter $\nu$ is in XP. Otherwise it is NP-complete.\qed
\end{thm}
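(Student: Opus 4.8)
The plan is to prove the two assertions separately. That the problem lies in \textup{NP} is immediate: a model $(H,\psi)$ has size $\cO(n+m)$, and the two defining conditions of a model can be verified in $\cO(m\nu)$ time. The content is therefore (i) an algorithm running in time $n^{\cO(\nu)}$ when $H\in\sT$, and (ii) a polynomial-time reduction showing \textup{NP}-hardness when $H$ has a triangle.

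\emph{The \textup{XP} algorithm.} Fix $G$ and the triangle-free graph $H$, and suppose $(H,\psi)$ is a model of $G$. Let $V_H^+=\{v\in V(H):\psi^{-1}(v)\neq\es\}$ and choose a representative $r_v\in\psi^{-1}(v)$ for each $v\in V_H^+$. The algorithm enumerates all $\cO(2^\nu n^\nu)=n^{\cO(\nu)}$ candidates for the triple $\bigl(V_H^+,\{r_v\}_{v\in V_H^+},(v\mapsto r_v)\bigr)$ and, for each, attempts to complete it to a model in polynomial time. The crucial observation is that the representatives nearly pin down $\psi$: if $x\notin N_G[r_w]$ then $x\notin\psi^{-1}(v)$ for every $v$ with $v=w$ or $vw\in E(H)$, while if $x\in\psi^{-1}(v)$ then $\{w\in V_H^+:x\in N_G[r_w]\}=N_H[v]\cap V_H^+$. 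Hence the set $L(x)$ of thick vertices to which $x$ could legally be assigned is exactly the set of $v\in V_H^+$ whose closed neighbourhood in $H[V_H^+]$ equals $\{w\in V_H^+:x\in N_G[r_w]\}$. Since $H$ is triangle-free, two distinct vertices sharing the same closed neighbourhood must form a $K_2$-component of $H[V_H^+]$; consequently $|L(x)|\le 2$, with the ambiguous case occurring only across the two endpoints of such a component. The vertices with $|L(x)|=1$ give a forced part of $\psi$; for each $K_2$-component $\{v,v'\}$ of $H[V_H^+]$, the vertices ambiguous between $v$ and $v'$ together with the already-fixed parts of $\psi^{-1}(v)$ and $\psi^{-1}(v')$ must induce a clique of $G$ (since $v\sim v'$ forces those two thick cliques to be mutually complete), which is checked directly, after which the ambiguous vertices may be split arbitrarily. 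Each candidate is thus completed or rejected in polynomial time, we finish by verifying the resulting $\psi$ against the definition of a model, and we accept iff some candidate succeeds. The total running time is $n^{\cO(\nu)}$.

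\emph{\textup{NP}-hardness.} Here $H$ is fixed and contains a triangle, hence a maximal clique $Q$ with $k:=|Q|\ge3$. We reduce from the problem of covering the vertices of a graph $G'$ by $k$ cliques (equivalently, deciding whether the complement of $G'$ is $k$-colourable), which is \textup{NP}-complete for every fixed $k\ge3$. We build $G$ so that the vertices of $G'$ must be distributed among the $k$ thick vertices indexed by $Q$: these are pairwise adjacent in $H$, so they obstruct no edge of $G'$, and a model then restricts to a partition of $V(G')$ into $k$ cliques. To force this distribution we add, for every $v\in V(H)\setminus Q$, a fresh clique $C_v$ of size $N$ far exceeding $|V(G')|$, joined completely to $C_w$ whenever $vw\in E(H)$; and, for vertices of $V(H)\setminus Q$ adjacent to part of $Q$, a bounded set of \emph{anchor} vertices joined completely to $V(G')$ and to the appropriate padding cliques. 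The size of the padding cliques forces $C_v\subseteq\psi^{-1}(v)$ for $v\notin Q$, so no vertex of $V(G')$---which is anticomplete to every $C_v$---can lie in such a $\psi^{-1}(v)$; the anchors force every vertex of $V(G')$ into a thick vertex indexed by $Q$ and keep those thick vertices located. Conversely, any $k$-clique cover of $G'$ extends to a model of $G$. Thus $G\in\thksf{H}$ iff $G'$ admits a $k$-clique cover.

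The step I expect to be the main obstacle is making this reduction work \emph{uniformly} for every fixed $H$ containing a triangle. The delicate point is preventing ``leakage'' in both directions---no vertex of $G'$ may escape the part of $H$ spanned by $Q$, and no padding may seep into that part---which is genuinely awkward when vertices of $V(H)\setminus Q$ are adjacent to some but not all of $Q$, and when $H$ carries nontrivial automorphisms or several triangles. Resolving it requires a case analysis of the adjacency pattern between $Q$ and $V(H)\setminus Q$ together with a careful design of the anchor gadgets and of the padding sizes; this is essentially the construction of MacGillivray and Yu~\cite{MacYu}, which we follow. By comparison the \textup{XP} direction is routine once one notices the rigidity that triangle-freeness imposes on $\thin G$ (cf.\ Lemma~\ref{lem:mobile} and Remark~\ref{rem:unique}).
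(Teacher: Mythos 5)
The paper cites Theorem~\ref{thm:MacYu} as a result of MacGillivray and Yu~\cite{MacYu} and provides no proof (the statement closes with \textup{\qed}); so there is no paper proof to compare against, and your argument must stand on its own.

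Your \textup{XP} argument rests on a misconception about the model definition. Condition~\ref{m:edg} constrains only the \emph{edges} of $G$: a thick edge $\bs{uw}$ is an arbitrary cobipartite graph, not a complete one. Hence a vertex $x\in\psi^{-1}(v)$ need \emph{not} be adjacent to $r_w$ for a neighbour $w$ of $v$ in $H$. Both halves of your ``crucial observation'' and the resulting characterisation $L(x)=\{v\in V_H^+: N_{H[V_H^+]}[v]=S(x)\}$ are therefore false; what holds is only the one-sided inclusion $S(x)\subseteq N_H[v]\cap V_H^+$ when $\psi(x)=v$, and the correct candidate set is $L(x)=\{v\in S(x): S(x)\subseteq N_H[v]\}$ (which still has $|L(x)|\le2$ by triangle-freeness, since three pairwise elements of $S(x)$ whose closed neighbourhoods each contain $S(x)$ would form a triangle). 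Your over-restricted formula wrongly rejects valid models: take $H=P_3$ on $\{v_1,v_2,v_3\}$ with $\psi^{-1}(v_1)=\{a,b,r_1\}$, $\psi^{-1}(v_i)=\{r_i\}$ otherwise, and $a,b$ nonadjacent to $r_2$; whichever of $a,b,r_1$ is chosen as the representative for $v_1$, at least one of the other two has $S=\{v_1\}\ne\{v_1,v_2\}=N_H[v_1]\cap V_H^+$, giving $L=\emptyset$. The same misconception recurs in your treatment of $K_2$-components: ``$v\sim v'$ forces those two thick cliques to be mutually complete'' is simply not part of the definition. Once $L(x)$ is characterised by containment rather than equality, the completion step is no longer a local clique check but a genuine list-assignment/2-SAT instance enforcing both the clique condition inside each $\psi^{-1}(v)$ and the edge condition~\ref{m:edg}; this is doable but needs to be set up carefully, and your write-up does not do it.

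The \textup{NP}-hardness direction is a sketch that ultimately defers to MacGillivray and Yu; as you acknowledge, the delicate gadget design is exactly the content of their paper, so this half is essentially a citation rather than a proof.
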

This shows NP-completeness of recognising $H\in\thin G$ for 
$G\notin\thksf\sT$, but shows that there is a recognition algorithm if the 
parameter is $H\in\sT$. The algorithm given in~\cite{MacYu} has complexity 
$n^{\Omega(\nu)}$, so is only an XP algorithm. An obvious question is whether 
this can be improved to an FPT algorithm. We note that this has been done for 
thick bipartite graphs in~\cite{KKSL}, where an FPT algorithm is given for 
the stronger parameterisation that $\nu$ is the size of the smaller part of 
the bipartition. This gives a different generalisation of unipolar graphs 
from \thksf\sF. 

Theorem~\ref{thm:MacYu} answers a slightly different question from deciding whether $G\in\thksf{\sT_\nu}$, so we first show equivalence for fixed-parameter tractability.
\begin{lem}\label{lem:equivalent}
  Recognition of $G\in\thksf H$ with parameter $H$ is in FPT if and only if recognising $G\in\thksf{\sT_\nu}$ with parameter $\nu$ is in FPT.
\end{lem}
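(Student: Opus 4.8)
The plan is to prove the two implications of the biconditional separately; the forward one is a routine enumeration and the reverse one carries the content. For the forward implication, suppose recognising $\thksf{H'}$ is in {\rm FPT}, say in time $f(|V(H')|)\cdot|V(G)|^{\cO(1)}$. Since $\sT_\nu$ consists of triangle-free graphs on at most $\nu$ vertices, up to isomorphism it has at most $g(\nu):=(\nu+1)\,2^{\binom{\nu}{2}}$ members, and a list of them can be produced in time depending only on $\nu$. By definition $G\in\thksf{\sT_\nu}$ exactly when $G\in\thksf{H'}$ for one of these $H'$, so I would run the assumed algorithm on each candidate and return the disjunction; the total running time $g(\nu)\,f(\nu)\,|V(G)|^{\cO(1)}$ is {\rm FPT}.

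For the reverse implication it suffices to treat $H\in\sT$ (for $H\notin\sT$ recognition of $\thksf H$ is $\mathrm{NP}$-complete by Theorem~\ref{thm:MacYu}, so the statement concerns only $H\in\sT$). As $H$ is triangle-free on $\nu$ vertices, $\thksf H\subseteq\thksf{\sT_\nu}$, so the first step is to run the assumed {\rm FPT} recognition for $\thksf{\sT_\nu}$ on $G$ and reject if $G\notin\thksf{\sT_\nu}$. Otherwise $\thin G\cap\sT\neq\es$, and by Remark~\ref{rem:unique} this set forms a single class under the contraction/stretching operations of Lemma~\ref{lem:mobile}, each member having at most $2\nu$ vertices; what remains is to decide whether $H$ belongs to this class. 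I would do this by (i) extracting from the decision procedure one explicit model $(H_0,\psi_0)$ of $G$ with $|V(H_0)|\le\nu$, and then (ii) using Remark~\ref{rem:unique} to reduce ``$H\in\thin G$'' to a bounded search: enumerate the members of the equivalence class of $(H_0,\psi_0)$ reachable by valid thick-edge contractions and stretchings --- at most $2^{\cO(\nu^2)}$ of them, since each has at most $2\nu$ vertices --- and accept iff one is isomorphic to $H$, which is decidable in time depending only on $\nu$. All of this is {\rm FPT} once step (i) is available.

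The delicate point, which I expect to be the main obstacle, is step (i): the hypothesis supplies only a \emph{decision} algorithm for $\thksf{\sT_\nu}$, so I would need to turn it into one that outputs a model. The natural route is a self-reduction that repeatedly calls the decision procedure on $G$ with small auxiliary gadgets attached so as to commit successive cliques of the model, keeping every query inside $\thksf{\sT_{\nu+\cO(1)}}$. For this to succeed the gadgets must simultaneously (a) preserve triangle-freeness of at least one thin graph, (b) faithfully record the committed part of the clique partition, so that no thin graph can reorganise a gadget to evade a commitment, and (c) enlarge every thin graph by only $\cO(1)$ vertices, keeping the parameter at $\nu+\cO(1)$. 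Establishing (a)--(c) together --- in particular ruling out such evasions --- is the real work; once a model is in hand, the bounded search of step (ii) is a routine consequence of Remark~\ref{rem:unique} and the bound $2\nu$ on the number of thick vertices that it provides.
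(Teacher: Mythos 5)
Your proposal has the same two-implication skeleton as the paper, and your forward direction is essentially identical (enumerate the finitely many triangle-free graphs on at most $\nu$ vertices and run the assumed $\thksf H$ recogniser on each). Your step~(ii) of the reverse direction also reproduces the paper's argument: starting from one witness model of $G$, use Remark~\ref{rem:unique} to bound the number of thin graphs in $\thin G\cap\sT$ reachable by stretching/contracting (at most $2^{\cO(\nu^2)}$, each on at most $2\nu$ vertices), then test each for isomorphism with $H$.

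The divergence is your step~(i). The paper's proof of the converse opens with ``suppose we have an FPT algorithm with $\nu$ as parameter \emph{which returns} $H'\in\sT_\nu$ with $H'\in\thin G$'' --- it simply posits a witness-producing algorithm, which is justified in context because the algorithm eventually supplied by Theorem~\ref{thm:fpt} does output an explicit model. You read ``recognition'' as pure decision and therefore insert a decision-to-search self-reduction, which you yourself flag as ``the real work'' and leave unestablished. As written, then, your proof has a genuine gap precisely at step~(i): the gadget construction you sketch is not carried out, and it is far from clear that the requirements (a)--(c) you list can be met simultaneously (in particular, forcing two vertices into the same thick vertex by adding clones does not work, since the clones may be split across two adjacent thick vertices). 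If you instead adopt the paper's reading of ``recognition'' (decision together with a certificate, which is what the constructive algorithm in section~\ref{sec:recog} and Theorem~\ref{thm:fpt} actually delivers), step~(i) becomes vacuous and your argument closes, coinciding with the paper's. So the honest summary is: you correctly identify a subtlety the paper silently assumes away, but you do not resolve it; the paper's proof is sound in context because the hypothesis it actually uses is what Theorem~\ref{thm:fpt} provides.
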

\begin{proof}
  Suppose we have a FPT algorithm with parameter $H\in\sT$ to decide whether $G\in\thksf H$, where $\nu=|\VS(H)|$. Thus we can test whether $G\in\thksf{H'}$ for each of the graphs $H'\in\sT_\nu$  using the given FPT algorithm. We simply generate all $2^{\nu(\nu-1)/2}$ graphs on $\nu$ vertices and reject those with triangles. This gives an FPT algorithm with parameter~$\nu$ to decide if $G\in\thksf{\sT_\nu}$. 
  
  Conversely, suppose we have an FPT algorithm with $\nu$ as parameter which 
  returns $H'\in\thin G\cap\sT_{\nu}$. Since $G\in\thksf\sT$, its neighbourhoods are unipolar. 
  Then vertices can only be moved into one of the identified neighbours of any node $\bs u$,
  from Cor.~\ref{cor:neighbour}. It follows that $H'$ is the same as $H$ up to contracting full links 
  and/or splitting nodes. So any other graph $H''\in \thin G\cap\sT_\nu$ can be formed by 
  contracting the links and/or splitting the nodes of $G$. We generate all such graphs 
  $H''$ by performing sequences of valid splits and contractions. 
  
  There are at most $2^{|\ES(H')|}\leq2^{\nu^2}$ ways of contracting full links of $G$. 
  Since $\dg(\bs u)< \nu$ for $\bs u\in H'$, there are less than 
  $2^{\nu}$ ways of splitting $\bs u$ so that the neighbourhoods of its two 
  parts are disjoint, and thus $2^{\nu^2}$ overall. Thus we generate 
  less than $2^{2\nu^2}$ graphs $H''$. If $H''\notin\sT$ or $|\VS(H'')|\neq\nu$ 
  then $H''$ can be discarded. For the remainder, we check if $H''$ 
  is isomorphic to $H$. At worst, this requires comparing all $\nu!$ 
  relabellings of $H$ with $H''$. This gives an FPT algorithm for recognising 
  $G\in\thksf H$. 
\end{proof}
We now show that
\begin{thm}\label{thm:fpt}
  Recognition of $G\in\thksf{\sT_\nu}$ with parameter $\nu$ is in FPT.
\end{thm}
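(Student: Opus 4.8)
The plan is to combine the recognition machinery of Section~\ref{sec:recog} with the fact (Remark~\ref{rem:unique}) that, for $G\in\thksf\sT$, the class $\thin G$ is essentially unique up to stretching and contracting thick edges, so that any $H\in\thin G\cap\sT$ has at most twice as many vertices as a ``minimal'' member of $\thin G$. Thanks to Lemma~\ref{lem:equivalent}, it suffices to produce an FPT algorithm, with parameter $\nu$, that decides whether $G\in\thksf{\sT_\nu}$, i.e.\ whether $G$ has \emph{some} thin graph that is triangle-free and has at most $\nu$ vertices. First I would dispose of the trivial failures: run the clique-cutset decomposition of~\cite{Tarjan} in $\cO(mn)$ time; if $G$ has an atom that is neither a clique nor a cobipartite graph then $G\notin\thksf\sT$ (by Lemmas~\ref{lem:prop10} and~\ref{lem:prop20}, a long hole or long antihole would be forbidden, and more directly a triangle-free thin graph forces every atom to be a clique or thick edge), so we may assume $G\in\sQ$ and in particular $G$ is perfect and long-hole-free. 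Note $\omega(G)\le\nu$ is forced, since each thick vertex is a clique and a triangle-free $H$ on $\nu$ vertices has at most $\nu$ thick vertices contributing to any clique --- actually a clique of $G$ lives in at most two thick vertices (an edge of $H$), so $\omega(G)\le 2\cdot(\text{max clique size in a thick vertex})$; in any case, if $\omega(G)>2\nu$ we reject. More importantly $n\le\nu\cdot\omega(G)$ is \emph{not} bounded, so we genuinely need structure, not brute force.

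The heart of the argument is to adapt the thick-forest recognition algorithm of Section~\ref{sec:recog} to build a thin graph $H$ while bounding $|V(H)|$ by $\nu$. I would run essentially the same divide-and-conquer: pick a vertex $v$, form $C=\{v\}$, and \emph{expand} $C$ using the \texttt{UNIPOLAR} subroutine (valid for $\thksf\sT$ by Remark~\ref{rem:unipolar}, since the closed neighbourhood of a vertex in a triangle-free graph is a star) to obtain a maximal clique separator $A$ that is either a thick vertex (type~\ref{inc1}) or sits inside a thick edge (type~\ref{inc2}). The key new ingredient is that $H$ need no longer be acyclic, so after removing a thick vertex $\bs u$ the pieces $V_1,\dots,V_r$ need not be disjoint in the way a forest guarantees --- two pieces may have to be reattached at a second thick vertex to close a cycle. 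Here I would exploit the parameter: since $\nu$ bounds $|V(H)|$, the number of thick vertices is at most $\nu$ and the number of thick edges at most $\binom\nu2$, so after the clique-cutset decomposition isolates the ``biconnected'' structure, the number of atoms that must be glued into a single triangle-free thin graph of bounded order is at most $\cO(\nu^2)$. For each such bounded collection of atoms (each a clique or a cobipartite graph) I would guess, among at most $2^{\cO(\nu^2)}$ possibilities, how the $\le\nu$ thick vertices of the target $H$ partition the vertices lying in these atoms, subject to: (i) each guessed thick vertex is a clique in $G$; (ii) each guessed thick edge is a cobipartite graph; (iii) the resulting $H$ has no triangle; and then verify consistency with the already-decomposed tree-like parts via \texttt{EDGE}/\texttt{LEAF} of Lemma~\ref{lem:brecog} and Lemma~\ref{lem:crecog}. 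By Lemma~\ref{lem:mobile} and Remark~\ref{rem:unique}, any two valid thin graphs differ only by stretching/contracting, so if \emph{some} valid $H$ exists the search finds one; and by Remark~\ref{rem:unique} we may restrict to $|V(H)|\le 2\nu$ throughout, keeping every guess of size $2^{\cO(\nu^2)}$.

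The running time is then $f(\nu)\cdot\cO(mn)$: the $\cO(mn)$ factors come from \cite{Tarjan}, from each \texttt{EDGE}/\texttt{LEAF}/expansion call (each $\cO(mn)$ as analysed in Sections~\ref{secB:thickedge}--\ref{sec:expand}), and from the $\cO(m)$ separation checks, while $f(\nu)=2^{\cO(\nu^2)}\cdot\nu!$ absorbs the enumeration of candidate partitions, the stretch/contract variants, and the isomorphism checks. Since the recursion tree has depth $\cO(n)$ and total ``work size'' telescoping as in Section~\ref{sec:time}, the overall bound is $f(\nu)\cdot\cO(mn)$, which is FPT.

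\textbf{Main obstacle.} The delicate point is handling the non-tree (cyclic) part of $H$: unlike in the forest case, removing a single thick vertex need not split $G$ into independent subproblems, so the clean top-down recursion of Section~\ref{sec:recog} breaks. The crux is to argue that the entire ``cyclic core'' --- everything not separated off by clique cutsets into tree-like branches --- corresponds to a bounded number ($\cO(\nu^2)$) of atoms, so it can be resolved by bounded brute-force guessing of the thick-vertex partition, and then the tree-like branches hang off it exactly as in the thick-forest algorithm. Making precise that ``bounded order of $H$'' $\Rightarrow$ ``bounded core'' (using that each atom of the clique-cutset decomposition meets at most two thick vertices, cf.\ Lemma~\ref{lem:inclusion}) and that the guessed core is consistency-checkable against the branches in polynomial time is where the real work lies; everything else is bookkeeping on top of Section~\ref{sec:recog}.
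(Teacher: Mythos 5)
Your proposal has a genuine gap, in fact two.

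First, the initial reduction fails. You write: ``run the clique-cutset decomposition of~\cite{Tarjan}; if $G$ has an atom that is neither a clique nor a cobipartite graph then $G\notin\thksf\sT$.'' This is false: $\thksf\sT\not\subseteq\sQ$. A thick $C_4$ whose four thick vertices are $K_2$'s joined by single-edge thick edges is an 8-cycle; it has no clique cutset, its sole atom is neither a clique nor cobipartite, yet it is in $\thksf{C_4}\subset\thksf{\sT_4}$. Lemma~\ref{lem:holesantiholes} asserts $\sQ\subset\sP_0$ and Corollary~\ref{cor:forestperfect} asserts $\thksf\sF\subseteq\sP_0$, but neither extends to $\thksf\sT$: any cycle of $H$ can produce arbitrarily long holes in $G$. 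So you cannot assume $G\in\sQ$, and the plan to isolate a ``bounded cyclic core'' of clique/cobipartite atoms does not get started.

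Second, even if it did, the enumeration bound is wrong. You claim that once the cyclic core has been confined to $\cO(\nu^2)$ atoms, one can guess in $2^{\cO(\nu^2)}$ ways how the at most $\nu$ thick vertices of $H$ partition the vertices of $G$ that lie in those atoms. But a single atom can contain $\Theta(n)$ vertices of $G$, so the number of candidate assignments of core vertices to thick vertices is of order $\nu^{\Theta(n)}$, not a function of $\nu$ alone. What is bounded by a function of $\nu$ is the number of thick vertices and thick edges of $H$, not the number of ways to map $V(G)$ onto them, and this confusion is exactly what your claimed $f(\nu)=2^{\cO(\nu^2)}\cdot\nu!$ elides.

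The paper's proof avoids both pitfalls by an entirely different strategy. It first checks $G\in\thksf\sF$ in polynomial time; if that fails, it picks a vertex $v$ of maximum ``thick degree'' $d$ via the unipolar decomposition of $N[v]$ (valid for $\thksf\sT$ by Remark~\ref{rem:unipolar}), rejects if $d>\nu$, and otherwise deletes the edges from the hub $\bs h_v$ to $d-1$ of the $d$ clique components of $N(\bs h_v)$, severing $d-1$ thick edges and creating a thick leaf. This edge-deletion step can be applied at most $\nu$ times before $H$ would be exhausted, and each step branches over at most $d<\nu$ choices of which component is $C_1$; hence the search tree has size at most $\nu^\nu$, and each node is resolved by the polynomial-time thick-forest recogniser. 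Components with $d\le2$ are thick paths or cycles and are handled directly. The crucial structural idea the paper uses, and that your proposal misses, is that one never guesses a vertex-to-thick-vertex assignment: one only guesses a bounded number of edge deletions that reduce the instance to the already-solved thick-forest case.
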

\begin{proof}
First we check whether all components of $G$ are all cliques and cobipartite graphs,
using the method of section~\ref{sec:cobip}. If so, the thin graph $H$ has only isolated verticess and edge.
If so, we accept \ifff $H\in\sG_\nu$.

Otherwise, we choose $u\in V$ and use the method for recognising a node $\bs u$ of $G\in\thksf\sF$ given in section~\ref{sec:recog}, which applies equally to $G\in\thksf\sT$.  If this fails, we reject $G$ and stop.

Now $\Nb(\bs u)$ may have vertices misplaced to one of its thick neighbours $\bs w_i$ for $i\in[d]$, where $d=\deg(\bs u)$. In recognising $G\in\thksf\sF$ this misplacement is not problematic, since its effect can only propagate along a path in $H$. Then we can recover from the misplacement as described in section~\ref{sec:adjust}. But if $H\notin\sF$, these paths of misplacements may collide, and efficient recovery may not be possible. However, suppose we have misplacement into one of $\bs u$'s thick neighbours, $\bs w_j$ $(j\in[d])$. 

Now $\Nb(\bs u)$ gives the edge sets $\bs u:\bs w_i$ for $i\in[d]\sm j$ so deleting these from $G$ gives a graph $G'_j$  with $(d-1)$ fewer links and one more node with degree at most 1. Now $G_j'\in \thksf{\sT_\nu}$ if and only if $G\in \thksf{\sT_\nu}$. We now repeat the above on $G'_j$. This cannot continue for $\nu$ iterations or all vertices in $\thin{G''}$ for the resulting graph $G''$ have degree at most~1. Then $\thin{G''}$ comprises only isolated links and vertices, and will be accepted at the start of the next iteration.

The problem is that we may not know $j$, so we must try all $d$ possibilities, giving rise to $d$ graphs $G'_1$, \ldots, $G'_d$. Now $G\in\thksf\sT$ if and only if at least one of these is in $\thksf\sT$. Since $d<\nu$ and at most $\nu$ iterations are necessary, at most $\nu^\nu$ graphs are generated before acceptance or rejection. Since the required computation on each such graph runs in time in $\cO(mn)$, from section~\ref{sec:recog}, the algorithm is in FPT with parameter $\nu$.
\end{proof}
\begin{cor}\label{cor:subdivision}
 The graph operation of replacing single edges by paths is called \emph{subdivision}. Since we can fix a path, using the method of section~\ref{sec:adjust}, it follows that the algorithm of Theorem~\ref{thm:fpt} gives polynomial time recognition of the class of graphs which are thick subdivisions of triangle-free graphs on $\nu$ vertices. In particular, this includes all thick cycles of length at least 4 since these are subdivisions of a thick cycle of length 4, which is in $\thksf{\sT_4}$.\qed
\end{cor}
\begin{rem}\label{rem:smallbip}
In~\cite[Thm.\,1.4]{KKSL}, Theorem~\ref{thm:fpt} is proved for the special case $G\in\thksf{K_{\nu_1,\nu_2}}$ and parameter $\nu=\nu_1+\nu_2$. In fact, \cite{KKSL}  give a stronger result~\cite[Thm.\,1.3]{KKSL}: there is an FPT algorithm when the parameter is $\nu_1$ only. This clearly does not follow immediately from Theorem~\ref{thm:fpt}. We believe our methods can be adapted to give this improvement, but we will not pursue it further here.
\end{rem}
The criterion of Theorem~\ref{thm:MacYu} extends to any finite class $\sC$, but not to infinite classes, though we know from Lemma~\ref{lem:trianglerecog} that $\thksf\sC$ is not recognisable for any hereditary class $\sC$ such that $\sC\not\subseteq\sT$. We have seen that recognition of thick forests $\thksf\sF\subset\thksf\sT$ is in P, but for thick bipartite graphs $\thksf\sB\subset\thksf\sT$ recognition is NP-complete. Neither is being cycle-free a necessary condition, from Cor.~\ref{cor:subdivision}. We leave as an open question what properties of $\sC$ are necessary and sufficient for $G\in\thksf\sC$ to be decidable in P.

As for algorithms on $\sT_\nu$, counting independent sets in the class of Theorem~\ref{thm:fpt} is in XP, since any of the $\nu$ nodes may contain at most one vertex of the independent set. Thus we can simply check all of the $\cO(n^\nu)$ possibilities. Though we will not prove it here, this problem is $\textrm{\#W}[1]$-complete. (See, for example,~\cite[Prop.\,4.2]{KKSL} for a similar result.) Thus we cannot hope for an FPT algorithm.

Counting colourings, even approximately, in $G\in\thksf\sT_\nu$ is clearly not tractable unless $G\in\thksf\sF$, by Theorem~\ref{thm:nocol}.

\subsection{Parameterising by treewidth}\label{sec:treewidth}
Here we consider parameterising by the \emph{treewidth} of the thin graph.
Treewidth measures how treelike a graph is. Let $\sW_k$ be the class of graphs with
treewidth at most $k$. 

In section~\ref{ss:btw def} we will review the ideas. 
We note that computing treewidth is NP-hard, even when restricted to
co-bipartite graphs~\cite{ACP}. There is an $\cO(n^{k+2})$ XP algorithm~\cite{ACP} 
for obtaining a tree decomposition of width at most $k$, if one exists, 
but known FPT algorithms only give approximations~\cite{BelFur}.

Treewidth is an important parameterisation of graphs, because of its
application in algorithmic meta-theorems~\cite{MSOLiii}.
Our interest in $\sW_k$ here is that the class $\sW_1=\sF$, and we have shown that $\thksf\sF$ is a tractable class. So we ask which of our results above can be extended to $\thksf{\sW_k}$ for small $k>1$.

Unfortunately, the classes \thksf{\sW_k} already have an NP-complete recognition problem for $k>1$ by Lemma~\ref{lem:trianglerecog}, since $\sW_k$ includes the triangle for all $k>1$. Whether the class $\thksf{\sW_2\cap\sT}$ has polynomial time recognition we leave as an open question, but this seems possible, since $\sW_2$ contains only series-parallel graphs. In particular, $\sW_2\cap\sT$ contains all cycles $C_\ell$ with
$\ell>3$, which do have polynomial time recognition from~Cor.~\ref{cor:subdivision}.

Though we cannot recognise $G\in\thksf\sW_k$ in general, we may still be able to use this characterisation of $G$ for algorithmic purposes provided a decomposition of $G$ with thin graph $H\in\sW_k$ is given. This is true of many other NP-complete graph parameters, clique-width~for example~\cite{KLM}. In this vein, we will show in section~\ref{ss:btw is} that we can count independent sets exactly in $\thksf{\sW_k}$.

As for counting colourings, we know by Lemma~\ref{lem:Nonforests} that this is intractable for $G\notin\thksf\sF$, that is, $G\notin\thksf{\sW_1}$. So counting colourings in \thksf{W_k} is intractable for any $k>1$.

\subsubsection{Counting independent sets}\label{sec:indsetsbeyond}\label{ss:btw is}

Here we show that there is a deterministic algorithm for computing $\nis(G)$, the number of
independent sets of a thick graph $G$ if we are given a model $(H,\psi)$
with $\tw(H)\leq k$ for some~$k>1$.

If $\alpha(G)$ is the size of the largest independent set in $G$,
the algorithm is based on the following lemma.

\begin{lem}\label{l:base}
  Let $G=(V,E)$ be a graph with model $(H,\psi)$ and let $(I,b)$ be a
  tree decomposition of $H$ of width $k$. For every $i \in I$ we have
  $\alpha(G[\psi^{-1}(b(i))]) \le k+1$.
\end{lem}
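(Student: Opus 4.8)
The plan is to observe that $G[\psi^{-1}(b(i))]$ is covered by very few cliques and then invoke the trivial bound $\alpha \le \theta$ (independence number at most clique cover number). First I would recall that, since $(I,b)$ has width $k$, every bag satisfies $|b(i)| \le k+1$; write $b(i) = \{\bs v_1, \ldots, \bs v_t\}$ with $t \le k+1$. Because $\psi$ is a function, the sets $\psi^{-1}(\bs v_j)$ partition $\psi^{-1}(b(i))$, so $\psi^{-1}(b(i)) = \psi^{-1}(\bs v_1) \uplus \cdots \uplus \psi^{-1}(\bs v_t)$.

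Next I would use property~\ref{m:ver} of the model $(H,\psi)$: for each $j$ the set $\psi^{-1}(\bs v_j)$ is a clique in $G$, hence also in the induced subgraph $G[\psi^{-1}(b(i))]$. Thus $\{\psi^{-1}(\bs v_j) : j \in [t]\}$ is a clique cover of $G[\psi^{-1}(b(i))]$ of size $t$, so $\theta(G[\psi^{-1}(b(i))]) \le t \le k+1$. Finally, any independent set of a graph meets each clique in a clique cover in at most one vertex, so $\alpha(G[\psi^{-1}(b(i))]) \le \theta(G[\psi^{-1}(b(i))]) \le k+1$, which is the claim.

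There is essentially no obstacle here: the statement is a one-step consequence of the definition of a model together with the elementary inequality $\alpha \le \theta$; the only thing to be slightly careful about is that we are bounding $\alpha$ of the \emph{induced} subgraph on $\psi^{-1}(b(i))$, which is fine since the cliques $\psi^{-1}(\bs v_j)$ remain cliques there and still cover all its vertices.
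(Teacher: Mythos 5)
Your argument is exactly the paper's: the bag has at most $k+1$ vertices, each pulls back under $\psi$ to a clique by property~\ref{m:ver} of a model, and an independent set can take at most one vertex from each such clique. You have simply spelled out the implicit last step as the inequality $\alpha \le \theta$ applied to the resulting clique cover, which is a fine and slightly more explicit presentation of the same proof.
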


\begin{proof}
  The bag $b(i)$ contains at most $k+1$ vertices of $H$ since the width of
  $(I,b)$ is $k$. Each vertex $\bs{v} \in b(i)$ corresponds to a clique
  $\psi^{-1}(\bs{v})$ of $G$ by property~\eqref{m:ver} of the model $(H,\psi)$.
\end{proof}
We can now show
\begin{lem}\label{l:base}
  Let $G=(V,E)$ be a graph with model $(H,\psi)$, where $H\in\sW_k$. Then there is an $\cO(n^{k+2})$
  algorithm for counting all independent sets in $G$.
\end{lem}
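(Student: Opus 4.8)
The plan is to run a dynamic programme over a nice tree decomposition of the thin graph $H$, exploiting that an independent set of $G$ contains at most one vertex of each thick vertex $\psi^{-1}(\bs v)$. First I would compute a tree decomposition of $H$ of width at most $k$: since $k$ is constant, one exists and can be found in $\cO(n^{k+2})$ time by the algorithm of~\cite{ACP}, and in $\cO(n)$ further time it can be converted into a nice tree decomposition $(T,b)$, rooted at a node $r$ with $b(r)=\es$ and having $|I|=\cO(|V(H)|)=\cO(n)$ nodes, as recalled in section~\ref{ss:btw def}. For a bag $B\subseteq\cV$ write $\psi^{-1}(B)=\bigcup_{\bs v\in B}\psi^{-1}(\bs v)$; by the bound $\alpha(G[\psi^{-1}(b(i))])\le k+1$ established above, an independent set of $G$ restricted to $\psi^{-1}(b(i))$ picks at most one vertex from each of at most $k+1$ cliques, so there are only $\cO(n^{|b(i)|})=\cO(n^{k+1})$ such restrictions, which I will call \emph{traces} at $i$.

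The table entry $c_i(S)$, for a node $i$ and a trace $S$ at $i$, will record the number of independent sets $I$ of the subgraph of $G$ induced by $\psi^{-1}$ of the union of all bags in the subtree of $T$ rooted at $i$, with $I\cap\psi^{-1}(b(i))=S$. I compute these bottom-up by the usual recurrences, adapted to thick vertices: at a leaf $l$ we have $b(l)=\es$, so $c_l(\es)=1$; at an introduce node that adds $\bs v$ above a child $j$, every trace at $i$ is a trace $S'$ at $j$ together with at most one vertex $x\in\psi^{-1}(\bs v)$ that is non-adjacent in $G$ to all of $S'$, and $c_i(S)=c_j(S\sm\psi^{-1}(\bs v))$, using that $\bs v$ occurs in no bag below $i$ so that the clique $\psi^{-1}(\bs v)$ is fresh; at a forget node that removes $\bs v$ above a child $j$, $c_i(S)=\sum_x c_j(S\cup\{x\})$, the sum ranging over the empty choice and those $x\in\psi^{-1}(\bs v)$ that form an independent set with $S$; at a join node with children $j,k$ and $b(i)=b(j)=b(k)$, $c_i(S)=c_j(S)\cdot c_k(S)$. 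The answer is $c_r(\es)=\nis(G)$.

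The step needing the most care is the join recurrence together with its no-double-counting argument: I must check that if $I$ is an independent set at node $i$ with trace $S$, then its parts lying in $\psi^{-1}$ of the bags strictly below $b(i)$ on the $j$-side and on the $k$-side are non-adjacent in $G$, so that $I$ is determined by its two restrictions and is counted exactly once, with no correction factor. This holds because a thick vertex appearing in a bag of the $j$-subtree but not in $b(i)$ cannot appear in any bag of the $k$-subtree, by the connectivity property~\ref{tw:str}, hence is distinct from and non-adjacent in $H$ to every thick vertex appearing in the $k$-subtree but not in $b(i)$; and non-adjacency in $H$ forbids, by property~\ref{m:edg} of a model, any edge of $G$ between the corresponding cliques. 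Unlike the clique-cutset lemmas, no $(q)_k$-type division is then needed.

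For the running time there are $\cO(n)$ nodes. A leaf costs $\cO(1)$; a join node fills its $\cO(n^{k+1})$ traces with one multiplication each; an introduce node fills $\cO(n^{k+1})$ traces from the $\cO(n^{k})$ traces of its child by trying $\cO(n)$ choices of $x$ with an $\cO(k)$ adjacency test each; a forget node has only $\cO(n^{k})$ traces (the forgotten thick vertex having left the bag), each computed by a sum over $\cO(n)$ vertices with an $\cO(k)$ test. So each node costs $\cO(kn^{k+1})$, and over $\cO(n)$ nodes this is $\cO(kn^{k+2})=\cO(n^{k+2})$ for constant $k$, absorbing the $\cO(n^{k+2})$ spent on the decomposition. (As in the colouring count, an extra $\cO(n)$ factor enters in the bit model since the counts can be as large as $2^n$; the same recurrences also compute the weighted count $W(G)$ of section~\ref{sec:indsets} unchanged.) The real obstacle is thus not one hard idea but keeping the bookkeeping tight — in particular noting that the forget-node table is only $\cO(n^{k})$ — so that the total stays at $\cO(n^{k+2})$ rather than $\cO(n^{k+3})$.
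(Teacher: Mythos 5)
Your proposal is correct and takes essentially the same approach as the paper: a dynamic programme over a nice tree decomposition of $H$, with table entries indexed by a node $i$ and an independent set $S\subseteq\psi^{-1}(b(i))$, using the standard leaf/introduce/forget/join recurrences and the same appeal to property~\ref{m:edg} of a model and the connectivity property~\ref{tw:str} to justify the join step. Your runtime bookkeeping is a bit more explicit than the paper's, but the method and the $\cO(n^{k+2})$ bound are the same.
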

\begin{proof}
In $\cO(n^{k+2})$ time we can find a nice tree decomposition $(T,b)$ of $H$ of width $k$,
using the algorithm of~\cite{ACP}. Let $T=(I,A)$ with root $r$. For each node $i \in I$ let $C(i)$ be
the set of descendants of $i$ in $T$. These are the nodes $j \in I$ such that
$i$ is on the path from $j$ to $r$ in $T$, including $i$ itself. Our algorithm
computes recursively, for every $i \in I$ and every independent set
$S \subseteq \psi^{-1}(b(i))$ of $G$, the numbers $a(i,S)$ of
independent sets of $G(i) = G[\psi^{-1}\big(\bigcup_{j \in C(i)} b(j)]\big)$
that coincide with $S$ on $\psi^{-1}(b(i))$. That is,
$\nis(G(i)) = \sum_{S} a(i,S)$, where we sum over all independent sets
$S \subseteq \psi^{-1}(b(i))$ of $G(i)$. The values of $a(i,S)$ are computed
from the leaves of $T$ up to the root $r$ as follows.

\begin{description}[topsep=0pt,itemsep=0pt]
\item[leaf] For every leaf $l$ of $T$ we have $a(l,\es)=1$, because the empty
  set $\es$ is the only independent set in the empty graph $(\es,\es)$.
\item[introduce] Let $i$ be an introduce node with child $j$ and let
  $\bs{w} \in b(i) \sm b(j)$. For every independent set
  $S\subseteq \psi^{-1}(b(j))$ and every vertex $v \in \psi^{-1}(\bs{w})$
  such that $S\cup\{v\}$ is independent we have $a(i,S) = a(j,S)$ and
  $a(i,S\cup\{v\}) = a(j,S)$. The former is obvious. The later equality follows
  from property \ref{tw:edg} of tree decompositions: Since $\bs{w}$ is the new
  vertex in $b(i)$, all neighbours of $v$ in $G(i)$ are in $\psi^{-1}(b(i))$.
\item[forget] Let $i$ be an forget node with child $j$ and let
  $\bs{w} \in b(j) \sm b(i)$. For every independent set
  $S\subseteq \psi^{-1}(b(j))$ and every $v \in \psi^{-1}(\bs{w})
  $ we have $a(i,S) = a(j,S)$ if $v \notin S$ and
  $a(i,S\sm\{v\}) = a(j,S\sm\{v\}) + a (j,S)$ if $v \in S$.
\item[join] Let $i$ be a join node with children $j$ and $j'$. For every
  independent set $S \subseteq \psi^{-1}(b(i))$ we have
  $a(i,S) = a(j,S) \cdot a(j',S)$, because there are no edges of $H$ between
  vertices in $C(j)\sm b(i)$ and $C(j')\sm b(i)$, and consequently there are
  no edges on $G$ between $\psi^{-1}(C(j)\sm b(i))$ and
  $\psi^{-1}(C(j')\sm b(i))$.
\item[root] $G$ has $\nis(G) = a(r,\es)$ independent sets because
  $S \cap b(r) = \es$ holds for all independent sets $S$ of $G$.
\end{description}
For an $n$-vertex  graph $G$ we must compute a table of $\cO(n^{k+2})$ values
$a(i,S)$ because the tree $T$ has linear size, and by Lemma \ref{l:base}
the independent set $S$ chooses at most $k+1$ vertices from the set
$\psi^{-1}(b(i))$. Consequently, the algorithm runs in time $\cO(n^{k+2})$. 
\end{proof}
The algorithm can be modified easily to deal with weighted independent sets.

Thus we have an XP algorithm provided we are given an $H\in\thin G$ of treewidth at most $k$. 
We leave as an open question whether there is an FPT algorithm for counting independent sets in this setting.

\section*{Acknowledgment}
We thank Mark Jerrum for useful inputs and comments on an earlier version of this paper.

\end{document}

\begin{figure}[htb]
  \centering
  \begin{tikzpicture}[xscale=0.75,yscale=0.75,font=\sffamily,font={\sffamily\scriptsize}]
    \node[v] (a) at (0,0) {a};
    \node[v] (b) at (2,0) {b};
    \node[v] (c) at (1,1) {c};
    \node[v] (d) at (0,2) {d};
    \node[v] (e) at (2,2) {e};
    \draw (a)--(d)--(e)--(b)--(a);
    \draw (a)--(c)--(d) (b)--(c)--(e);
  \end{tikzpicture}\caption{Nonchordal $G_C$}\label{fig:nonchordal} 
\end{figure}